\begin{document}

\newtheorem{theorem}{Theorem}[section]
\newtheorem{prop}[theorem]{Proposition}
\newtheorem{lemma}[theorem]{Lemma}
\newtheorem{cor}[theorem]{Corollary}
\newtheorem{definition}[theorem]{Definition}
\newtheorem{defn}[theorem]{Definition}
\newtheorem{conj}[theorem]{Conjecture}
\newtheorem{claim}[theorem]{Claim}
\newtheorem{defth}[theorem]{Definition-Theorem}
\newtheorem{obs}[theorem]{Observation}
\newtheorem{rmk}[theorem]{Remark}
\newtheorem{remark}[theorem]{Remark}
\newtheorem{qn}[theorem]{Question}

\newcommand{\hhat}{\widehat}
\newcommand{\boundary}{\partial}
\newcommand{\C}{{\mathbb C}}
\newcommand{\integers}{{\mathbb Z}}
\newcommand{\natls}{{\mathbb N}}
\newcommand{\ratls}{{\mathbb Q}}
\newcommand{\reals}{{\mathbb R}}
\newcommand{\proj}{{\mathbb P}}
\newcommand{\lhp}{{\mathbb L}}
\newcommand{\tube}{{\mathbb T}}
\newcommand{\cusp}{{\mathbb P}}
\newcommand\AAA{{\mathcal A}}
\newcommand\BB{{\mathcal B}}
\newcommand\CC{{\mathcal C}}
\newcommand\DD{{\mathcal D}}
\newcommand\EE{{\mathcal E}}
\newcommand\FF{{\mathcal F}}
\newcommand\GG{{\mathcal G}}
\newcommand\HH{{\mathcal H}}
\newcommand\II{{\mathcal I}}
\newcommand\JJ{{\mathcal J}}
\newcommand\KK{{\mathcal K}}
\newcommand\LL{{\mathcal L}}
\newcommand\MM{{\mathcal M}}
\newcommand\NN{{\mathcal N}}
\newcommand\OO{{\mathcal O}}
\newcommand\PP{{\mathcal P}}
\newcommand\QQ{{\mathcal Q}}
\newcommand\RR{{\mathcal R}}
\newcommand\SSS{{\mathcal S}}
\newcommand\TT{{\mathcal T}}
\newcommand\UU{{\mathcal U}}
\newcommand\VV{{\mathcal V}}
\newcommand\WW{{\mathcal W}}
\newcommand\XX{{\mathcal X}}
\newcommand\YY{{\mathcal Y}}
\newcommand\ZZ{{\mathcal Z}}
\newcommand\CH{{\CC\Hyp}}
\newcommand\MF{{\MM\FF}}
\newcommand\PMF{{\PP\kern-2pt\MM\FF}}
\newcommand\ML{{\MM\LL}}
\newcommand\PML{{\PP\kern-2pt\MM\LL}}
\newcommand\GL{{\GG\LL}}
\newcommand\Pol{{\mathcal P}}
\newcommand\half{{\textstyle{\frac12}}}
\newcommand\Half{{\frac12}}
\newcommand\Mod{\operatorname{Mod}}
\newcommand\Area{\operatorname{Area}}
\newcommand\ep{\epsilon}
\newcommand\Hypat{\widehat}
\newcommand\Proj{{\mathbf P}}
\newcommand\U{{\mathbf U}}
 \newcommand\Hyp{{\mathbf H}}
\newcommand\D{{\mathbf D}}
\newcommand\Z{{\mathbb Z}}
\newcommand\R{{\mathbb R}}
\newcommand\Q{{\mathbb Q}}
\newcommand\E{{\mathbb E}}
\newcommand\til{\widetilde}
\newcommand\length{\operatorname{length}}
\newcommand\tr{\operatorname{tr}}
\newcommand\gesim{\succ}
\newcommand\lesim{\prec}
\newcommand\simle{\lesim}
\newcommand\simge{\gesim}
\newcommand{\simmult}{\asymp}
\newcommand{\simadd}{\mathrel{\overset{\text{\tiny $+$}}{\sim}}}
\newcommand{\ssm}{\setminus}
\newcommand{\diam}{\operatorname{diam}}
\newcommand{\pair}[1]{\langle #1\rangle}
\newcommand{\T}{{\mathbf T}}
\newcommand{\inj}{\operatorname{inj}}
\newcommand{\pleat}{\operatorname{\mathbf{pleat}}}
\newcommand{\short}{\operatorname{\mathbf{short}}}
\newcommand{\vertices}{\operatorname{vert}}
\newcommand{\collar}{\operatorname{\mathbf{collar}}}
\newcommand{\bcollar}{\operatorname{\overline{\mathbf{collar}}}}
\newcommand{\I}{{\mathbf I}}
\newcommand{\tprec}{\prec_t}
\newcommand{\fprec}{\prec_f}
\newcommand{\bprec}{\prec_b}
\newcommand{\pprec}{\prec_p}
\newcommand{\ppreceq}{\preceq_p}
\newcommand{\sprec}{\prec_s}
\newcommand{\cpreceq}{\preceq_c}
\newcommand{\cprec}{\prec_c}
\newcommand{\topprec}{\prec_{\rm top}}
\newcommand{\Topprec}{\prec_{\rm TOP}}
\newcommand{\fsub}{\mathrel{\scriptstyle\searrow}}
\newcommand{\bsub}{\mathrel{\scriptstyle\swarrow}}
\newcommand{\fsubd}{\mathrel{{\scriptstyle\searrow}\kern-1ex^d\kern0.5ex}}
\newcommand{\bsubd}{\mathrel{{\scriptstyle\swarrow}\kern-1.6ex^d\kern0.8ex}}
\newcommand{\fsubeq}{\mathrel{\raise-.7ex\hbox{$\overset{\searrow}{=}$}}}
\newcommand{\bsubeq}{\mathrel{\raise-.7ex\hbox{$\overset{\swarrow}{=}$}}}
\newcommand{\tw}{\operatorname{tw}}
\newcommand{\base}{\operatorname{base}}
\newcommand{\trans}{\operatorname{trans}}
\newcommand{\rest}{|_}
\newcommand{\bbar}{\overline}
\newcommand{\UML}{\operatorname{\UU\MM\LL}}
\newcommand{\EL}{\mathcal{EL}}
\newcommand{\tsum}{\sideset{}{'}\sum}
\newcommand{\tsh}[1]{\left\{\kern-.9ex\left\{#1\right\}\kern-.9ex\right\}}
\newcommand{\Tsh}[2]{\tsh{#2}_{#1}}
\newcommand{\qeq}{\mathrel{\approx}}
\newcommand{\Qeq}[1]{\mathrel{\approx_{#1}}}
\newcommand{\qle}{\lesssim}
\newcommand{\Qle}[1]{\mathrel{\lesssim_{#1}}}
\newcommand{\simp}{\operatorname{simp}}
\newcommand{\vsucc}{\operatorname{succ}}
\newcommand{\vpred}{\operatorname{pred}}
\newcommand\fhalf[1]{\overrightarrow {#1}}
\newcommand\bhalf[1]{\overleftarrow {#1}}
\newcommand\sleft{_{\text{left}}}
\newcommand\sright{_{\text{right}}}
\newcommand\sbtop{_{\text{top}}}
\newcommand\sbot{_{\text{bot}}}
\newcommand\sll{_{\mathbf l}}
\newcommand\srr{_{\mathbf r}}
\newcommand\geod{\operatorname{\mathbf g}}
\newcommand\mtorus[1]{\boundary U(#1)}
\newcommand\A{\mathbf A}
\newcommand\Aleft[1]{\A\sleft(#1)}
\newcommand\Aright[1]{\A\sright(#1)}
\newcommand\Atop[1]{\A\sbtop(#1)}
\newcommand\Abot[1]{\A\sbot(#1)}
\newcommand\boundvert{{\boundary_{||}}}
\newcommand\storus[1]{U(#1)}
\newcommand\Momega{\omega_M}
\newcommand\nomega{\omega_\nu}
\newcommand\twist{\operatorname{tw}}
\newcommand\modl{M_\nu}
\newcommand\MT{{\mathbb T}}
\newcommand\Teich{{\mathcal T}}
\renewcommand{\Re}{\operatorname{Re}}
\renewcommand{\Im}{\operatorname{Im}}

\title{Cannon-Thurston Maps for Kleinian Groups}

\author{Mahan Mj}

\address{School of Mathematics, Tata Institute of Fundamental Research, Homi Bhabha Road, Mumbai-400005, India}
\email{mahan.mj@gmail.com; mahan@math.tifr.res.in}

\begin{abstract}
 We
 show that Cannon-Thurston maps  exist for  degenerate free groups without parabolics, i.e. for handlebody groups.
Combining these techniques  with earlier work proving the existence of Cannon-Thurston maps for surface groups, we show 
that Cannon-Thurston maps  exist for
arbitrary finitely generated Kleinian groups without parabolics,
proving  conjectures of Thurston and McMullen. We also show that point pre-images under Cannon-Thurston maps for  degenerate
free groups without parabolics
correspond to
end-points of leaves of an ending lamination in the Masur domain,  whenever a point has more than one pre-image. This proves a conjecture of Otal. We also prove a similar result for 
 point pre-images under Cannon-Thurston maps for arbitrary finitely generated Kleinian groups  without parabolics.

\end{abstract}

\subjclass[2010]{57M50, 20F67 (Primary); 20F65,  22E40  (Secondary)}

\maketitle

 \hfill
To Bill Thurston for lasting inspiration.

\tableofcontents

\section{Introduction} Let $G$ be a Kleinian group, $L_G$ its limit set in the boundary sphere $S^2$, and $D_G (= S^2 \setminus L_G)$
its domain of discontinuity. We paraphrase Question 14 of Thurston's problem-list \cite{thurston-bams} below:

\begin{qn} Suppose that $\Gamma$ is a geometrically finite Kleinian group and $G$ an arbitrary Kleinian group abstractly isomorphic to $\Gamma$
via a weakly type-preserving isomorphism, i.e. an isomorphism taking parabolics of $\Gamma$ to parabolics of $G$. 
Then
is it true that there is a continuous map $g$ from the  limit set $L_\Gamma$ (of $\Gamma$) onto the limit set $L_G$ (of $G$)
taking the fixed point(s) of an element $\gamma$ to the fixed point(s) of the corresponding element  $\gamma^\prime$? \label{ctqn} \end{qn}

A continuous map $g$ as in Question \ref{ctqn} is called a {\bf Cannon-Thurston map} because of Cannon and Thurston's seminal paper  \cite{CT, CTpub}.
We  refer to the Introduction of \cite{mahan-split}  for  a detailed history of the problem and mention only that
in \cite{mahan-split} we showed that simply or doubly degenerate surface Kleinian groups without accidental parabolics admit Cannon-Thurston maps.
 In \cite{mahan-elct} we had shown that point pre-images 
of the Cannon-Thurston map for  simply or doubly degenerate groups without accidental parabolics
 correspond to endpoints of leaves of ending laminations whenever a point has more than one pre-image. The main aim of this paper is to apply
the techniques developed in  \cite{mahan-split} and    \cite{mahan-elct} to extend these results to arbitrary finitely generated
Kleinian groups without parabolics (and more generally to manifolds whose ends
admit a Minsky model), thus answering affirmatively Question \ref{ctqn} as also questions of 
McMullen \cite{ctm-locconn} and Otal \cite{otal-thesis}. This completes the project starting with \cite{mahan-split} and proceeding through \cite{mahan-elct,mahan-red}.\footnote{An earlier version of
	some parts of this paper existed in draft form in an earlier version of \cite{mahan-split}. The division of material between \cite{mahan-split} and the present paper is in the interests of readability.}

One more ingredient is necessary before we proceed with  statements of the main results.
A geodesic lamination on a hyperbolic surface is a foliation of a closed subset by geodesics. Let $E$ be a degenerate end 
of a hyperbolic 3-manifold with boundary surface $S = \partial E$. Then there exists a sequence of simple closed curves $\{ \sigma_n \}$ on $S$
whose geodesic realizations exit the end $E$
(this innocent sounding statement is a consequence of deep work of several authors including Thurston \cite{thurstonnotes},
Bonahon \cite{bonahon-bouts}, Canary \cite{canary}, Agol \cite{agol-tameness}
and Calegari-Gabai \cite{gab-cal}). 
Then the limit of such a sequence (in the space of projectivized measured laminations $\PML (S)$; for the time being, the reader
will not be too far off if (s)he thinks of the Hausdorff limit on the bounding surface $S$ of $E$) is a lamination $\lambda$. It turns out that
$\lambda$ is independent of the sequence $\{ \sigma_n \}$ and is called the {\bf ending lamination} for the end $E$.
 The following provides one of the main theorems of this paper. \\
\noindent { \bf Theorem \ref{ptpreimagefinal}} {\it Let $G$ be a finitely generated free degenerate  Kleinian group without parabolics. Let $i : \Gamma_G \rightarrow {\mathbb H}^3$ be the natural identification
of a Cayley graph of $G$ with the orbit of a point in ${\mathbb H}^3$. Then $i$ extends continuously to a map 
$\hat{i}: \hhat{\Gamma_G} \rightarrow {\mathbb D}^3$ between the compactifications $\hhat{\Gamma_G}, {\mathbb D}^3$
of ${\Gamma_G}, {\mathbb H}^3$ respectively. Let $\partial i$ denote the restriction of $\hat{i}$ to the boundary $\partial \Gamma_G$ of $\Gamma_G$.
Then $\partial i(a) = \partial i(b)$ for $a \neq b \in \partial \Gamma_G$  if and only if 
  $a, b$ are either ideal 
end-points of a leaf of an ending lamination  of $G$, or ideal boundary points of a
complementary ideal polygon.  }

 \smallskip
 
The hyperbolic boundary $\partial \Gamma_G$ is ($G-$equivariantly) homeomorphic to the limit set of some (any) geometrically finite
group (without parabolics) isomorphic to $G$. Hence Theorem \ref{ptpreimagefinal} above provides a positive answer to Question \ref{ctqn}
for free degenerate  Kleinian group without parabolics. 
There are three main new ingredients of the proof over and above \cite{mahan-split}:
\begin{enumerate}
\item[a)] We need to show that split components sufficiently deep inside an end
are incompressible (Proposition \ref{incompressible}). This was automatic in \cite{mahan-split}.
\item[b)]
A crucial idea in the proof of  the existence of the Cannon-Thurston map $\hat i$ in Theorem \ref{ptpreimagefinal} goes back to 
work of Miyachi \cite{miyachi-ct} (see also \cite{souto-ct})  in the case of bounded geometry. There a collection of disjoint
embedded disks are constructed in the (compact) core handlebody cutting it up into a ball. The boundary circles of these disks are flowed
out the end giving rise to a collection of quasiconvex disks. 
\item[c)] The broad idea here is similar to (b).
The basic difference between Miyachi's approach and ours is that we are forced to use a coarse model
rather than a continuous one, forcing the methods of this paper to be technically quite a bit more involved. To 
tackle this issue we need to use the split geometry model \cite{mahan-split}, recalled in Section \ref{min}, 
 and introduce certain "admissible paths" in Section
\ref{sec-adm}.
\end{enumerate}

\smallskip

The proof of Theorem \ref{ptpreimagefinal} generalizes with some modifications to arbitrary finitely generated Kleinian groups.
The relative 
hyperbolic boundary $\partial \Gamma_G$ \cite{bowditch-relhyp}
of a Kleinian group $G$  is ($G-$equivariantly) homeomorphic to the limit set of some (any) geometrically finite
 Kleinian group isomorphic to $G$, provided the isomorphism is strictly type-preserving (i.e. it maps parabolics to parabolics and pulls back 
parabolics to parabolics). 

\medskip

\noindent { \bf Theorems \ref{ct} and \ref{ptpreimagefinal-kg}} {\it Let $G$ be a finitely generated Kleinian group. 
 Let $i : \Gamma_G \rightarrow {\mathbb H}^3$ be the natural identification
of a Cayley graph of $G$ with the orbit of a point in ${\mathbb H}^3$. Let $M= {\mathbb H}^3/G$ and assume that each degenerate end $E$
of $M$ admits a bi-Lipschitz Minsky model (for instance if $M$ has no parabolics, see Remark \ref{minapp} below). 
 Then $i$ extends continuously to a map 
$\hat{i}: \hhat{\Gamma_G} \rightarrow {\mathbb D}^3$,
where $\hhat{\Gamma_G}$ denotes the (relative) hyperbolic compactification of $\Gamma_G$. Let $\partial i$ denote the restriction of $\hat{i}$ to the boundary $\partial \Gamma_G$ of $\Gamma_G$.

Let $E$ be a degenerate end of $N^h= {\mathbb H}^3/G$ and $\til E$ a lift of $E$ to $\til{N^h}$
and let $M_{gf}$ be an augmented Scott core of $N^h$. Then the ending lamination $\LL_E$ for the end $E$ lifts to a lamination
on $\til{M_{gf}} \cap \til{E}$. Each such lift $\LL$ of the ending lamination of a degenerate end defines a relation $\RR_\LL$ on the (Gromov) 
hyperbolic boundary $ \partial \widetilde{M_{gf}}$
(equal to the relative hyperbolic boundary $\partial \Gamma_G$ of $\Gamma_G$),
 given by
$a\RR_\LL b$ if and only if
 $a, b$ are  end-points of a leaf of $\LL$. Let $\{ \RR_i \}_i$ be the entire collection of relations on $ \partial \widetilde{M_{gf}}$ obtained this way. Let $\RR$ be the transitive closure of
the union $\bigcup_i \RR_i$. Then $\partial i(a) = \partial i(b)$ if and only if $a\RR b$.} 

\smallskip

\begin{remark}\label{minapp}
Theorem \ref{ct} gives an affirmative answer to a conjecture of McMullen \cite{ctm-locconn} and Theorem \ref{ptpreimagefinal}
gives an affirmative answer to a conjecture of Otal \cite{otal-thesis} under the assumption of the existence of a Minsky model.
The existence of
such a model was established in \cite{minsky-elc1, minsky-elc2} for manifolds with incompressible boundary and announced
for the general (compressible boundary)
 case in \cite{minsky-elc3}. Since \cite{minsky-elc3} has not yet appeared we give a sketch of a proof of the existence of a Minsky
model in the special case that $M$ has no parabolics
following ideas of Brock, Bromberg and Souto in the Appendix \ref{app}. 
\end{remark}

 For ease of exposition, throughout this paper, we shall often first work out the problem for free groups and then indicate the 
generalization to arbitrary finitely generated Kleinian groups.

\smallskip

\noindent {\bf Acknowledgments:} I am grateful to Jean-Pierre Otal for suggesting the problem of finding point pre-images of the Cannon-Thurston
map for handlebodies; and for giving me a copy of his thesis \cite{otal-thesis}, where the structure of Cannon-Thurston maps for handlebody groups
is conjectured. I thank Ken Bromberg  for explaining
the grafting construction in \cite{bromberg-proj} and the proof in the Appendix to me. 
I would  also like to thank the referee for  detailed  suggestions and corrections. Research of the author is supported in part   by  CEFIPRA Indo-French Research grant 4301-1 
	and in part by a DST JC Bose Fellowship.

\subsection{Outline of Paper  and Scheme of Proof} After discussing some preliminary material on Relative Hyperbolicity and Cannon-Thurston maps in
Section \ref{prel}, we recall the essential technical tools from \cite{mahan-split} in Section \ref{min}. 
Section \ref{min} has to do with the fact that any end admits a {\it split geometry}
structure.  Consider a geometrically infinite end $E$ homeomorphic to $S \times [0,\infty)$ where $S$ is a compact 
hyperbolic surface (possibly with boundary).
Split geometry roughly
gives a sequence of embedded surfaces $\{ \Sigma_i \}$ exiting $E$, such that successive surfaces $\Sigma_i, \Sigma_{i+1}$ bound between them a block $B_i$,
which either has bounded geometry, (i.e. is uniformly bi-Lipschitz homeomorphic to $S \times [0,1]$ where the latter has the product metric)
or contains a thin Margulis tube running  vertically from $\Sigma_i$ to $\Sigma_{i+1}$ and `splitting' $B_i$ into split components. Blocks of the latter kind are
called  {\it split blocks}. Electrocuting the lifts of split components in the universal cover
 gives rise to a combinatorial metric $d_G$ called a {\it graph metric} on $\til E$. 

Section \ref{mainsec} is the core of the paper and proves the existence of Cannon-Thurston maps for arbitrary finitely generated Kleinian groups $G$ under the additional assumption that each degenerate end of $M$ admits a Minsky model.
Modulo \cite{mahan-split, mahan-red} the proof reduces to proving this for manifolds $M = {\mathbb{H}}^3/G$ with compressible core.
The prototypical case is that of free degenerate Kleinian groups without parabolics, which is what we elaborate on here in the Introduction. 

 We briefly recall the proof of Miyachi \cite{miyachi-ct}
(or Souto \cite{souto-ct}) in the case of bounded geometry free degenerate Kleinian groups without parabolics. Let $H$ denote the
handlebody compact core of $M$. The end $E (=M \setminus H)$ is equipped with a Sol-type
metric as in \cite{CTpub}. A finite collection of disjoint disks $D_1, \cdots D_k$ on $H$ are chosen cutting $H$ up into a ball.
Let $\sigma_i$ denote the boundary curve of $D_i$. Then $\sigma_i$ can be canonically `flowed' out the end $E$ using the metric on $E$. In fact
$E$ may be thought of as (bi-Lipschitz homeomorphic to) the universal curve over a Teichmuller geodesic ray $\eta$ and each $\sigma_i$
gives rise to an annulus $C_i$ -- the `flow image' of $\sigma_i$ by the flow given by the one-parameter family of Teichmuller maps along $\eta$.
The collection $\{ A_i (=D_i \cup C_i)\}$ cut $M$ up into a (non-compact) ball. The crucial point in \cite{miyachi-ct} (or \cite{souto-ct}) that makes their proof work
is the quasiconvexity of each $A_i$. This is proved there using techniques similar to \cite{CTpub}. Once this is done, it follows more or less
automatically
that if a geodesic segment {\em in $\til H$} joins a pair of points $a,b$
 in a complementary component of a lift $D$ (of one of the $D_i$'s) then the geodesic
{\em in $\til M$} joining $a,b$ lies (coarsely) in the corresponding component of $(\til M \setminus A)$, where $A$ is the lift of $A_i$ containing $D$.

In this paper, we do not have the luxury of flowing along a Teichmuller geodesic. Instead we use the split geometry model of the end $E$
to 
\begin{enumerate}
\item discretize the problem by using the level surfaces $\Sigma_i$ given by split geometry.
\item construct a discretized flow image, by considering a `coarse annulus' replacement of $A_i$ by taking
the union  (over $i$) of closed geodesics on $\Sigma_i$
in the same homotopy class as $\sigma_i$. 
\end{enumerate}

 We need to work in $\til M$ rather than $\til E$. Hence, in order to carry over the split geometry machinery (especially the crucially important graph metric $d_G$)
 in the context of compressible cores, we need to  ensure that split components are actually incompressible sufficiently deep into the end $E$.
 This is proved in Section \ref{incomp}. With this in place, we prove in Section \ref{qd} 
 that each $A_i$ is quasiconvex in $\til M$ equipped with the graph metric $d_G$ (see
 Lemma \ref{dGqgeod0}, Corollary \ref{dGqgeod} and Lemma \ref{dGqgeod1} in particular).
 
 Next, while in \cite{miyachi-ct, souto-ct}, it is clear for trivial topological reasons that each $A_i$ separates $\til M$, this is no longer true
  in our case. We do have quasiconvexity of $A_i$ in the $d_G-$metric however. It is also true that $A_i \cap \til{\Sigma_i}$ separates
  the lift $\til{\Sigma_i}$ of the split surface $\Sigma_i$. In order to use this weaker separation property, we need to ensure that
if a geodesic segment {\em in $\til H$} joins a pair of points $a,b$, then the geodesic
{\em in $\til M$} joining $a,b$ has approximants built up of vertical pieces in blocks $\til{B_i}$ 
and horizontal pieces (lying on $\til{\Sigma_i}$). We call such approximants {\it admissible quasigeodesics} and deal with their
construction in Section \ref{sec-adm}. 

Once all these ingredients are in place, we prove the  existence of Cannon-Thurston maps for  free degenerate Kleinian groups without parabolics
in Theorem \ref{ct-free}.

Section \ref{ptpreim} then generalizes the work of \cite{mahan-elct} to show that point pre-images of multiple points are given by ending laminations.
We end by mentioning some applications, especially work of Jeon, Kim, Lecuire and Ohshika \cite{woojin} on primitive stable representations
and that of the author \cite{mahan-commens} on discreteness of commensurators of Kleinian groups. In Appendix \ref{app}, we sketch a proof
of the existence of a Minsky model when $M$ has no parabolics.

\section{Preliminaries}\label{prel}
\subsection{Relative Hyperbolicity}
We refer the reader to  \cite{farb-relhyp}
for terminology and details on relative
hyperbolicity and electric geometry. 

\begin{defn} {\rm
Given a metric space $(X,d_X)$ and a collection $\mathcal{H}$ of subsets, let  
$\EE(X,\HH ) = X \bigsqcup_{H \in \HH} (H \times [0,\frac{1}{2}])$ be the 
identification space obtained by  identifying $(h,0) \in H \times [0,\frac{1}{2}]$ 
  with $h \in X$. Each $\{ h \} \times [0,\frac{1}{2}]$ is declared to be
isometric to the  interval $[0, \frac{1}{2} ]$
 and  $H \times \{ \frac{1}{2} \}$ is equipped  with the zero metric. $\EE(X,\HH )$ is given a path pseudo-metric as follows. 

Only such paths in $\EE(X,\HH )$ are allowed whose intersection with any  $\{ h \} \times (0,\frac{1}{2})$ is either all of   $\{ h \} \times (0,\frac{1}{2})$ or is empty.
The distance between two points
in $\EE(X,\HH )$ is the infimum of lengths of such allowable paths.

 The resulting pseudo-metric space 
$\EE(X,\HH )$ is the {\bf electric space} associated to $X$ and the collection $\HH$. \\
We shall say that $\EE(X,\HH )$  is constructed from $X$ by {\bf electrocuting} the collection $\HH$ and the induced pseudo-metric $d_e$ will 
be called the {\bf
electric metric}.\\  (Quasi) geodesics in the electric metric will be referred to as electric  (quasi) geodesics.\\
If  $\EE(X,\HH )$ is (Gromov) hyperbolic, we say that $X$ is {\bf weakly hyperbolic} relative to $\HH$. }
\end{defn}

Note that since  $\EE(X,\HH ) = X \bigsqcup_{H \in \HH} (H \times [0,\frac{1}{2}])$, $X$ can be naturally identified with a subspace of $\EE(X,\HH )$.
Paths in $(X,d_X)$  can therefore be regarded as paths in $\EE(X,\HH )$, but are very far from being quasi-isometrically embedded in general.

A collection $\mathcal{H}$ of subsets of $(X,d_X)$ is said to be $D$-separated if $d_X(H_1, H_2) \geq D$
for all $H_1, H_2 \in \HH; H_1 \neq H_2$.
$D$-separatedness is only a technical restriction as the collection $\{ H \times \{ \frac{1}{2} \}: H \in \HH \}$ is $1$-separated in $\EE(X,\HH )$.

\begin{defn} \label{bt} {\rm  
Given a collection $\mathcal{H}$
of $C$-quasiconvex, $D$-separated sets in a (Gromov) hyperbolic metric space $(X,d_X)$ 
 we
shall say that a geodesic (resp. quasigeodesic) $\gamma$ is a geodesic
(resp. quasigeodesic) {\bf without backtracking} 
 if $\gamma$ does not return to $H$ after leaving it, for any $H \in \mathcal{H}$. \\
There is a distinguished collection of $1$-separated subsets of  $\EE(X,\HH )$ given by $\{ H \times \{ \frac{1}{2} \}: H \in \HH \}$.
An electric  quasigeodesic
 {\bf without backtracking}  in $\EE(X,\HH )$ is an electric  quasigeodesic that does not return to $H  \times \{ \frac{1}{2} \} $ after leaving it, for any $H \in \mathcal{H}$.
}
\end{defn}

\smallskip

\noindent  {\bf Notation:} For any pseudo metric space $(Z, \rho)$ and $A\subset Z$, 
we shall use the notation $N_R(A, \rho) = \{ x \in Z: \rho(x, A) \leq R \}$ as for metric spaces.

\begin{lemma} (Lemma 4.5 and Proposition 4.6 of \cite{farb-relhyp}; Theorem 5.3 of \cite{klarreich}; \cite{bowditch-relhyp})\\
Given $\delta , C$ there exists $\Delta$ such that
if $(X,d_X)$ is a $\delta$-hyperbolic metric space with a collection
$\mathcal{H}$ of $C$-quasiconvex  sets.
Then,\\
 1) {\rm Electric quasi-geodesics electrically track (Gromov) hyperbolic
  geodesics}, i.e. for all $P > 0$, there exists $K > 0$ such that if $\beta$ is any electric $P$-quasigeodesic from $x$ to
  $y$, and  $\gamma$ is a geodesic in $(X,d_X)$ from $x$ to $y$, 
then $\beta \subset N_K ( \gamma, d_e )$. \\
2) $\gamma \subset N_K ((N_0 ( \beta, d_e)), d_X)$. \\
3) {\rm Relative Hyperbolicity:} 
  $X$  is weakly hyperbolic relative to $\HH$. $\EE(X, \HH )$ is $\Delta$-hyperbolic.\\
\label{farb1A}
\end{lemma}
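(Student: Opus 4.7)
The plan is to reduce both tracking statements (1) and (2) to the standard Morse stability of quasigeodesics in a hyperbolic space, once statement (3) is established. The technical heart behind all three is a single ``electric projection'' lemma: any $d_X$-geodesic can be electrified into an electric $P_0$-quasigeodesic, with $P_0$ depending only on $\delta$ and $C$. This is essentially Farb's bounded penetration property, and once it is in hand the other pieces fall into place.

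For the projection, given a $d_X$-geodesic $\gamma$ from $x$ to $y$, I would consider the maximal sub-arcs $\gamma_\alpha$ of $\gamma$ contained in $N_1(H_\alpha, d_X)$ for some $H_\alpha \in \HH$, and replace each $\gamma_\alpha$ by a path that steps from the endpoint of $\gamma_\alpha$ into the cone $H_\alpha \times [0,\tfrac{1}{2}]$, traverses the zero-metric slice $H_\alpha \times \{\tfrac{1}{2}\}$, and exits back to $\gamma$ at the other endpoint. Call the resulting path $\hat\gamma$. By construction $\hat\gamma \subset N_1(\gamma, d_e)$ and each replacement contributes at most $1$ to the electric length. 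The claim that $\hat\gamma$ is an electric $P_0$-quasigeodesic, for $P_0 = P_0(\delta,C)$, is the crux: any genuine electric shortcut through some $H' \times \{\tfrac{1}{2}\}$ would, by $C$-quasiconvexity of $H'$ and $\delta$-thin triangles in $X$, force $\gamma$ itself to fellow-travel a portion of $H'$ at $d_X$-distance $O(\delta + C)$, and hence be part of a replacement, contradicting our maximality choice. The only room for shortening is therefore a universal additive constant.

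To prove (3), given three points in $\EE(X,\HH)$ I would pull them back to $X$ at electric cost at most $1/2$ each, form a $d_X$-geodesic triangle $T$, and electrify its sides by the above procedure to obtain a $P_0$-quasigeodesic electric triangle $\hat T$. For any point $\hat p$ on a side of $\hat T$, either $\hat p$ lies in a replacement cone, in which case it is at $d_e$-distance $\leq 1$ from the endpoints of that cone on the original $d_X$-arc; or $\hat p$ comes from an unmodified portion of $\gamma$, in which case $\delta$-thinness of $T$ produces a point on another side at $d_X$-distance, hence $d_e$-distance, at most $\delta$; post-electrification the corresponding point on $\hat T$ is within a uniformly bounded $d_e$-distance. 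A standard argument then upgrades this thinness for quasigeodesic triangles to $\Delta$-hyperbolicity of $\EE(X,\HH)$ with $\Delta = \Delta(\delta, C)$.

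Statements (1) and (2) then follow by Morse stability. For (1): an electric $P$-quasigeodesic $\beta$ in the $\Delta$-hyperbolic space $\EE(X,\HH)$ lies in a $K_0(P,\Delta)$-electric neighborhood of any electric geodesic joining $x$ to $y$, hence of the $P_0$-quasigeodesic $\hat\gamma$; combined with $\hat\gamma \subset N_1(\gamma, d_e)$ this gives $\beta \subset N_K(\gamma, d_e)$. For (2): symmetrically, $\hat\gamma$, and therefore $\gamma$ up to $d_e$-error $1$, lies in a $K_0$-electric neighborhood of $\beta$; each point $q \in \gamma$ is therefore connected to $\beta$ by a chain of controlled length alternating between $d_X$-hops and passages through slices $H \times \{\tfrac{1}{2}\}$, and the $C$-quasiconvexity of $H$ converts such chains back into bounded $d_X$-distance to the saturation $N_0(\beta, d_e) \cap X$. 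The main obstacle is the bounded penetration estimate of the projection step: ruling out hypothetical electric shortcuts through many slices requires the essential interplay between $C$-quasiconvexity of the $H$'s, $\delta$-hyperbolicity of $X$, and the choice of the $1$-neighborhood cutoff in defining the replacement sub-arcs, and is where all universality of constants originates.
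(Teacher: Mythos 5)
The paper does not prove this lemma; it is imported verbatim from Farb, Klarreich and Bowditch, so there is no in-paper proof to compare against. Judged on its own, your overall strategy — electrify a $d_X$-geodesic into an electric quasigeodesic $\hat\gamma$, deduce hyperbolicity of $\EE(X,\HH)$ from thin $d_X$-triangles, then read off (1) and (2) by Morse stability — is a reasonable route, and it is closer in spirit to Bowditch's treatment than to Farb's (who proves tracking first and hyperbolicity second). But the central ``electric projection'' step has a concrete gap.

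The cutoff in your replacement rule is wrong. You take maximal subarcs of $\gamma$ lying in $N_1(H_\alpha, d_X)$, yet your own justification that $\hat\gamma$ is an electric $P_0$-quasigeodesic says that a shortcut through some $H'$ forces $\gamma$ to fellow-travel $H'$ at $d_X$-distance $O(\delta+C)$, not $\leq 1$. As soon as $\delta+C$ exceeds $1$ (the generic case) $\gamma$ can run parallel to $H'$ at distance, say, $2\delta + 2C$ for an arbitrarily long stretch without ever entering $N_1(H')$; then no replacement fires, $\hat\gamma$ carries the full $d_X$-length of that stretch while $d_e(x,y)$ stays bounded, and the quasigeodesic bound fails. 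You must electrify within $N_D(H)$ with $D = D(\delta, C)$; then each replacement costs at most $2D+1$ instead of $1$, which is harmless, but the choice of $D$ is where the constants are actually earned, and it is exactly the thing your final sentence gestures at without fixing. Two further soft spots: (i) passing from ``the quasigeodesic triangles $\hat T$ are uniformly thin'' to ``$\EE(X,\HH)$ is $\Delta$-hyperbolic'' is not the bare thin-triangles definition, since the sides of $\hat T$ are only quasigeodesics; you need a thin-bicombing or Papasoglu-type criterion there, and invoking a ``standard argument'' hides real content. (ii) For item (2), the $d_e$-short chain from $q \in \gamma$ to $\beta$ may pass through cones $H'$ that $\beta$ never enters; $C$-quasiconvexity of those $H'$ says nothing about the $d_X$-distance from $q$ to the saturation of $\beta$, so that step requires a separate argument using hyperbolicity of $X$ and the entry/exit points of $\beta$ on the $H$'s that $\beta$ actually meets, rather than the loose ``chains convert back'' appeal you give.
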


Note that we do not need $D$-separatedness in the hypothesis of Lemma \ref{farb1A} as the definition of electrocution takes care of this..

Let $(X,d_X)$ be a $\delta$-hyperbolic metric
space, and $\mathcal{H}$ a family of $C$-quasiconvex, 
 collection of subsets. 
Let $\alpha = [a,b]$ be a  geodesic in $(X,d_X)$ and $\beta $ 
an electric 
$P$-quasigeodesic without backtracking in $\EE(X, \HH)$
 joining $a, b$. Order from the left the collection of maximal subsegments of $\beta$
contained entirely in some $H  \times \{ \frac{1}{2} \}: H \in \mathcal{H}$. Let $\{ [p_i, q_i]  \times \{ \frac{1}{2} \} ( \subset  H_i
\times \{ \frac{1}{2} \}) \}_i$ be  the collection of maximal subsegments.
Replace, as per this order,  each path of the form $\{p_i\} \times [0,\frac{1}{2}] \cup  [p_i, q_i]  \times \{ \frac{1}{2} \} \cup  \{q_i\} \times [0,\frac{1}{2}] \subset  H_i \times [0,\frac{1}{2}]$ 
 by a   geodesic $[p_i,q_i]$ in $X$. The resulting
{\bf connected}
path $\beta_q$ in $X$ is called an {\em electro-ambient representative} of $\beta$ in
$X$, or simply as the electro-ambient quasigeodesic joining the end-points of $\beta$.

\begin{lemma} (See Proposition 4.3 of \cite{klarreich},  Lemma
  2.5 of \cite{mahan-split}) 
Given $\delta$, $C,  P$ there exists $C_3$ such that the following
holds: \\
Let $(X,d_X)$ be a $\delta$-hyperbolic metric space and $\mathcal{H}$ a
family of $C$-quasiconvex
subsets. Let $(X,d_e)$ denote the electric space obtained by
electrocuting elements of $\mathcal{H}$.  Then, if $\alpha , \beta_q$
denote respectively a (Gromov) hyperbolic geodesic and an electro-ambient
$P$-quasigeodesic with the same end-points in $X$, then $\alpha$ lies in a
(Gromov hyperbolic $d_X-$) 
$C_3$ neighborhood of $\beta_q$.
\label{ea-strong}
\end{lemma}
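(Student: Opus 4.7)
The plan is to compare $\alpha$ to $\beta_q$ by passing through the electric metric, using the $\Delta$-hyperbolicity of $(\EE(X,\HH), d_e)$ guaranteed by Lemma \ref{farb1A}(3) combined with the $\delta$-hyperbolicity of $(X, d_X)$ and $C$-quasiconvexity of each $H \in \HH$.

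First, I would associate to the $d_X$-geodesic $\alpha$ its electric version $\alpha_e$ in $\EE$: for each maximal subarc $\alpha \cap H_i = [a_i, b_i]$, replace that subarc by the canonical electric jump $\{a_i\} \times [0,1/2] \cup [a_i, b_i] \times \{1/2\} \cup \{b_i\} \times [0, 1/2]$, of $d_e$-length $1$. A standard argument using $C$-quasiconvexity of the $H_i$'s and $\delta$-hyperbolicity of $X$ shows that $\alpha_e$ is an electric quasigeodesic in $(\EE, d_e)$ with constants $K_0 = K_0(\delta, C)$. Then both $\alpha_e$ and $\beta$ are electric quasigeodesics in $(\EE, d_e)$ with the same endpoints $a, b \in X$, so by stability of quasigeodesics in the hyperbolic space $(\EE, d_e)$, they Hausdorff-fellow-travel in $d_e$ by some constant $K_1 = K_1(\delta, C, P)$.

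The crucial step is to transfer this $d_e$-fellow-traveling to the required $d_X$-fellow-traveling between $\alpha$ and $\beta_q$. Fix $x \in \alpha$; then either $x$ lies outside every $H \in \HH$ (so $x \in \alpha_e$), or $x$ lies on a subarc $\alpha \cap H_i$ whose electric endpoints sit on $\alpha_e$; in both cases there is a point $y \in \beta$ with $d_e$-distance at most $K_1$ (from $x$ or from a nearby jump endpoint). Using the no-backtracking of $\beta$ and quasiconvexity of each $H_j$, I would show that a short $d_e$-path from $x$ to $y$ can be realized by a $d_X$-bounded detour through $\beta_q$: the $\beta \cap X$ portions of the detour lie literally on $\beta_q$, and any electric jump through some $H_j$ can be shadowed by the replacement geodesic $[p_j, q_j] \subset N_C(H_j, d_X) \cap \beta_q$, at additive $d_X$-cost only $O(C + \delta)$ via nearest-point projection onto $H_j$ in the $\delta$-hyperbolic space $(X, d_X)$. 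Summing the at most $K_1$ such contributions yields $d_X(x, \beta_q) \le C_3 = C_3(\delta, C, P)$.

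The main obstacle is this last step: bounded-penetration-type bookkeeping ensuring that each electric jump costs only $O(C + \delta)$ in the $d_X$-metric rather than the full (possibly unbounded) $d_X$-diameter of the corresponding quasiconvex set $H_j$. The key use of no-backtracking is that each $H_j$ is crossed at most once along the minimal $d_e$-path from $x$ to $y$, so that the total $d_X$-cost of promoting this path to a $d_X$-detour through $\beta_q$ is controlled additively by the electric fellow-traveling constant $K_1$.
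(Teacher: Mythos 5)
The paper states this lemma without proof, citing Proposition 4.3 of Klarreich and Lemma 2.5 of \cite{mahan-split}, so there is no in-paper argument to compare against; your proposal has to stand on its own. The setup (electrify $\alpha$ to $\alpha_e$, fellow-travel with $\beta$ in $(\EE,d_e)$ by stability, then transfer back to $d_X$) is a reasonable frame, and the first two steps are fine, but the transfer step contains a genuine gap.

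You claim an electric jump of a $d_e$-short path from $x$ to $y$ through some $H_j$ ``can be shadowed by the replacement geodesic $[p_j,q_j]\subset N_C(H_j,d_X)\cap\beta_q$, at additive $d_X$-cost only $O(C+\delta)$ via nearest-point projection onto $H_j$.'' This is not justified and is false in general. There are two separate problems. First, a $d_e$-geodesic from $x$ to $y$ need not penetrate the same quasiconvex sets as $\beta$, so $\beta_q$ may contain no replacement geodesic corresponding to a given jump. Second, even when it does — say $\beta$ enters $H_j$ at $p_j$ and exits at $q_j$ — the endpoints of the detour's electric jump are \emph{other} points of $H_j$, and there is no uniform bound on the $d_X$-distance from an arbitrary point of $H_j$ to the geodesic $[p_j,q_j]$; that distance can be as large as the $d_X$-diameter of $H_j$, which the lemma's hypotheses do not bound (no coboundedness or bounded-diameter assumption is made). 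For instance, a filled geodesic triangle with very long sides in $\Hyp^2$ is $\delta$-quasiconvex, yet a vertex is far from the opposite side.

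Your closing paragraph acknowledges this as ``the main obstacle,'' but the fix you offer — no-backtracking ensures each $H_j$ is crossed at most once — controls only the \emph{number} of jumps, not the $d_X$-cost of each one. Multiplying at most $K_1$ jumps by $O(C+\delta)$ per jump would indeed give the result, but the per-jump bound is exactly what is missing. What is actually needed is control on \emph{where} the short $d_e$-path enters and leaves each $H_j$ relative to where $\beta$ does — an intersection-pattern estimate comparing the two sets of entry/exit points, not merely a count of penetrations. As written, the proposal does not establish the lemma.
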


Two paths   $\beta , \gamma$ in $(X,d_X)$ with  the same endpoints  are said to have \emph{similar intersection patterns} 
with $\HH$ if  there exists $\epsilon >0$, depending only on $(X,\HH)$, such that:
\begin{itemize}
\item {\bf Similar Intersection Patterns 1:}  If
  precisely one of $\{ \beta , \gamma \}$ meets 
   some $H \in \mathcal{H}$, then the $d_X$-distance  from the entry point
  to the 
  exit point is at most $D$. 
\item {\bf Similar Intersection Patterns 2:}  If
 both $\{ \beta , \gamma \}$ meet some  $H \in \mathcal{H}$,
 then the distance  from the entry point of
 $\beta$ to that of $\gamma$ is at most $D$, and  similarly for the exit points. 
\end{itemize}

\begin{definition}  \cite{farb-relhyp} {\rm
Suppose that $X$ is 
 weakly hyperbolic relative to $\mathcal{H}$.
Suppose that any two electric quasigeodesics without backtracking and with the same endpoints  have similar intersection patterns with respect to
the collection $\{ H \times \frac{1}{2} : H \in \mathcal{H} \}$.
Then $(X,\HH)$ is said to satisfy  {\bf bounded penetration}  and 
 $X$ is said to be
{\bf strongly hyperbolic} relative to  $\mathcal{H}$.}
\end{definition}

The next condition ensures that $(X,\HH)$ is  {\bf strongly hyperbolic} relative to  $\mathcal{H}$.

\begin{definition} {\rm A collection $\mathcal{H}$ of uniformly
$C$-quasiconvex sets in a $\delta$-hyperbolic metric space $X$
is said to be {\bf mutually D-cobounded} if 
 for all $H_i, H_j \in \mathcal{H}$, $\pi_i
(H_j)$ has diameter less than $D$, where $\pi_i$ denotes a nearest
point projection of $X$ onto $H_i$. A collection is {\bf mutually
  cobounded} if it is mutually D-cobounded for some $D$. }
  \end{definition}

\begin{lemma} \cite[Proposition 4.6]{farb-relhyp}, \cite{bowditch-relhyp} Given $C, \delta \geq 0$, there exists $P$ such that the following holds:\\
Let $X$ be a $\delta$-hyperbolic metric space and $\HH$ a collection of  $\epsilon$ neighborhoods of mutually cobounded $C$-quasiconvex sets;
then any electro-ambient quasigeodesic is a $(P,P)$ quasigeodesic in $X$.
\label{ea-genl}
\end{lemma}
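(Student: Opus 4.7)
The plan is to compare the electro-ambient representative $\beta_q$ directly with the (Gromov) hyperbolic $X$-geodesic $\alpha = [a,b]$ joining the same endpoints, and deduce a linear bound on $\text{length}_X(\beta_q)$ in terms of $d_X(a,b)$. The key input will be that under the mutual coboundedness hypothesis the pair $(X,\HH)$ is strongly hyperbolic relative to $\HH$ (Proposition 4.6 of \cite{farb-relhyp}), so any two electric quasigeodesics between $a$ and $b$ have similar intersection patterns. Viewing $\alpha$ simultaneously as a path in $\EE(X,\HH)$, quasiconvexity of members of $\HH$ makes it routine to verify that $\alpha$ is itself an electric quasigeodesic (with constants depending only on $\delta$, $C$, $\epsilon$), and it is essentially without backtracking because $X$-geodesics can only re-enter a quasiconvex set with bounded detour. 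Hence for each $H\in\HH$ penetrated by both $\alpha$ and $\beta$, their entry (respectively exit) points into $H\times\{\tfrac12\}$ differ by at most a uniform $D_0$ in $d_X$.

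Next I decompose $\beta_q$ into alternating \emph{exterior} arcs (subsegments of $\beta$ lying in $X\setminus\bigcup_i H_i$) and \emph{interior} $X$-geodesic replacement arcs $[p_i,q_i]$, say $k$ of the latter. The $\EE$-length of $\beta$ equals the total $X$-length of the exterior arcs plus exactly $1$ per detour, so since $\beta$ is a $P$-electric quasigeodesic,
\[
\sum_i \text{length}_X(\text{ext. arc}_i) \;+\; k \;\le\; P\, d_e(a,b)+P^2 \;\le\; P\, d_X(a,b)+P^2.
\]
For the interior pieces, let $u_i, v_i$ denote the entry and exit of $\alpha$ into $H_i$; similar intersection patterns give $d_X(u_i,p_i),\, d_X(v_i,q_i)\le D_0$, hence $\text{length}_X([p_i,q_i]) = d_X(p_i,q_i)\le d_X(u_i,v_i)+2D_0$. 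Adding the two estimates will bound $\text{length}_X(\beta_q)$ linearly by $d_X(a,b)$ once I control $\sum_i d_X(u_i,v_i)$. Running the same argument on the truncation of $\beta_q$ between any two of its points $x,y$ (the electro-ambient representative of the corresponding sub-arc of $\beta$, still an electric quasigeodesic with the same constants) converts this length bound into $\text{length}_X(\beta_q|_{[x,y]})\le P' d_X(x,y)+P'$, which together with $d_X(x,y)\le \text{length}_X(\beta_q|_{[x,y]})$ yields the $(P',P')$-quasigeodesic inequality.

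The main obstacle is the control of $\sum_i d_X(u_i,v_i)$: the $\alpha$-subarcs $[u_i,v_i]$ can in principle overlap, and a naive sum could exceed $d_X(a,b)$ substantially. This is precisely where mutual coboundedness is indispensable. If consecutive excursions $[u_i,v_i]$ and $[u_{i+1},v_{i+1}]$ had a long overlap, then a long subarc of $\alpha$ would lie simultaneously in $N_\epsilon(H_i)$ and $N_\epsilon(H_{i+1})$; projecting this subarc to $H_i$ via nearest-point projection and using thinness of hyperbolic triangles with quasiconvexity of $H_i$, the projected image would travel a long distance inside $\pi_{H_i}(H_{i+1})$, contradicting $\diam(\pi_{H_i}(H_{i+1}))\le D$. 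This produces a uniform constant $D_1$ such that consecutive arcs overlap by at most $D_1$, whence $\sum_i d_X(u_i,v_i)\le d_X(a,b)+D_1 k$. Plugging back and absorbing the estimate on $k$ yields $\text{length}_X(\beta_q)\le P' d_X(a,b) + P'$ with $P'$ depending only on $\delta$, $C$, $\epsilon$ (and the input quasigeodesic constant), which is the conclusion.
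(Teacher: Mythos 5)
The paper does not prove this lemma; it is recalled as a citation to Farb's Proposition~4.6 and to Bowditch, so there is no in-paper argument to compare yours against. Your proposal is a plausible reconstruction of the standard proof, and the essential ingredients are in the right places: strong relative hyperbolicity (bounded penetration) as the consequence of coboundedness; the split of $\beta_q$ into exterior arcs and geodesic replacement arcs; using the electric quasigeodesic inequality to bound the exterior contribution together with the number $k$ of detours; and using similar intersection patterns plus a projection/coboundedness argument to bound the replacement arcs against excursions of the comparison $X$-geodesic $\alpha$.

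Two places need to be stated more carefully to close the argument. First, the overlap bound should be phrased for \emph{all} pairs of excursions, not merely consecutive ones; your projection argument in fact works verbatim for any pair $H_i \neq H_j$, and the all-pairs bound is what you actually need. With all pairwise overlaps at most $D_1$, a short induction on intervals sorted by left endpoint gives $\sum_i d_X(u_i,v_i) \le d_X(a,b) + D_1(k-1)$, whereas consecutive bounds alone do not control multiplicity and do not yield the claimed inequality. Second, you should explicitly account for $H_i$ penetrated by $\beta$ but not by $\alpha$: Similar Intersection Patterns~1 gives $d_X(p_i,q_i)\le D_0$ for these, so they contribute at most $D_0 k$, which is absorbed by the already-established bound on $k$. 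Finally, the restriction of $\beta_q$ to a sub-arc $[x,y]$ is not literally an electro-ambient representative of a sub-path of $\beta$ when $x$ or $y$ lies in the interior of a replacement geodesic; one should either extend $[x,y]$ to the nearest detour endpoints (a bounded change, by quasiconvexity of $H_i$) or argue directly on the corresponding sub-path of $\beta$. These are refinements rather than gaps in the approach; the overall scheme, and in particular the role of mutual coboundedness in controlling overlaps, is correct and in the spirit of Farb's original argument.
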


\noindent {\bf Partial Electrocution} \\
Let $M$ be a  (not necessarily simply connected) convex hyperbolic 3-manifold
with a neighborhood of the cusps excised. Then we can ensure that
each boundary component of $M$ is isometric to $\sigma \times P$, where $P$ is either
an interval or a circle, and $\sigma$ is a horocycle of some fixed 
length $e_0$. In the universal cover $\til M$, if we excise (open) horoballs, we
are left with a manifold whose boundaries are flat horospheres of the
form $\widetilde{\sigma} \times \tilde{P}$. Note that $\tilde{P} = P$ if
$P$ is an interval, and $\mathbb{R}$ if $P$ is a circle (the case for
a $(Z + Z)$-cusp ).

Let $Y$ be a convex simply connected hyperbolic 3-manifold.
Let $\mathcal{B}$ denote a collection of horoballs. Let $X$ denote
$Y$ minus the interior of the horoballs in $\mathcal{B}$. Let 
$\mathcal{H}$ denote the collection of boundary horospheres.  Then each
$H \in \mathcal{H}$ with the induced metric is isometric to a Euclidean
product $E^{1} \times L$ for an interval $L\subset \mathbb{R}$. Here $E^1$ denotes Euclidean $1$-space.
{\bf Partially electrocute}  each 
$H$ by giving it the product of the zero metric with the Euclidean metric,
i.e. on $E^{1}$ put the zero metric and on $L$ put the Euclidean
metric. The resulting space is essentially what one would get (in the spirit of \cite{farb-relhyp}) by gluing
to each $H$ the mapping cylinder of the projection of $H$ onto the $L$-factor. Let $d_{pel}$ denote the partially electrocuted pseudometric
on $X$.

The above construction can be done in the base manifold $M$ itself by equipping the boundary component $\sigma \times P$ with 
the product of a zero metric in the $\sigma$ direction and the Euclidean metric in the $P$-direction.

\begin{lemma} \cite[Lemma 1.20]{mj-pal}
$(X,d_{pel})$ is a (Gromov) hyperbolic metric space.
\label{pel}
\end{lemma}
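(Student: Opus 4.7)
The strategy is to compare the partially electrocuted metric $d_{pel}$ with the fully electrocuted metric $d_{el}$ obtained by giving each horosphere $H \in \HH$ the zero metric (as in the Farb construction). By Lemma~\ref{farb1A}(3), $(X, d_{el})$ is $\delta_0$-hyperbolic for some $\delta_0$ and satisfies bounded penetration with respect to the horospheres (which are quasiconvex in $Y$). Since partial electrocution zeros only the $E^1$-direction of $H = E^1 \times L$ but keeps the $L$-direction Euclidean, we have the pointwise ordering $d_{el} \le d_{pel} \le d_X$ on $X$. The plan is to promote the hyperbolicity of $d_{el}$ to hyperbolicity of $d_{pel}$ by tracking the $L$-direction contribution carefully.

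\smallskip

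\textbf{Step 1 (Normal form for $d_{pel}$-geodesics).} I would first describe a natural straightened representative of each $d_{pel}$-geodesic. Outside the horospheres the geodesic is a genuine $Y$-geodesic; upon entering $H$ at $p = (x_p, l_p)$ and exiting at $q = (x_q, l_q)$, the portion inside $H$ may be taken to move only in the $L$-direction, contributing exactly $|l_p - l_q|$ to $d_{pel}$-length (any $E^1$-motion is free and can be absorbed at the boundary). Using the electro-ambient construction of Lemmas~\ref{ea-strong} and~\ref{ea-genl}, I would show that such straightened $d_{pel}$-geodesics are uniform $d_{el}$-quasigeodesics, and conversely that electro-ambient lifts of $d_{el}$-geodesics are uniform $d_{pel}$-quasigeodesics (the only extra cost being $\sum |l_p - l_q|$, which is comparable to the omitted $d_X$-excursion inside each $H$).

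\smallskip

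\textbf{Step 2 (Transfer of bounded penetration).} Given two straightened $d_{pel}$-quasigeodesics $\gamma_1, \gamma_2$ joining the same pair of points, Step~1 shows they are $d_{el}$-quasigeodesics without backtracking. Bounded penetration in $(X, d_{el})$ implies that they enter and exit any horosphere $H$ at $d_X$-close points. Because the projection $\pi_H : (E^1 \times L,\, d_X|_H) \to (L,\, d_{\mathrm{Eucl}})$ is a Euclidean product projection, hence $1$-Lipschitz, the $L$-coordinates of the entry (resp.\ exit) points of $\gamma_1$ and $\gamma_2$ on $H$ are close, so the entry/exit points are $d_{pel}|_H$-close. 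This is the partial-electrocution analogue of bounded penetration.

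\smallskip

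\textbf{Step 3 (Thin triangles).} For a $d_{pel}$-geodesic triangle $\Delta xyz$, $\delta$-thinness reduces to two cases. On the sides' interior portions (in $Y \setminus \bigcup \HH$), thin-ness follows from $\delta$-hyperbolicity of $Y = \Hyp^3$ and the fact that by Step~2 the horosphere visits of the three sides form a consistent pattern (coming from bounded penetration in $d_{el}$). On shared horosphere portions, the metric $d_{pel}|_H$ is the $1$-dimensional Euclidean metric on $L$, so the triangle restricted to $H$ is automatically thin by Step~2.

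\smallskip

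\textbf{Main obstacle.} The crux is Step~1: making rigorous that $d_{pel}$-geodesics may be selected as straightened representatives that are simultaneously $d_{el}$-quasigeodesics. Because $E^1$-motion in each horosphere is free, $d_{pel}$-geodesics are genuinely non-unique, and uncontrolled $E^1$-excursions would break any naive comparison; one must combine the straightening with the electro-ambient machinery of Section~\ref{prel} to uniformly bound the quasigeodesic constants. Once Step~1 is secured, Steps~2--3 are essentially a translation of the Farb bounded-penetration argument using the $1$-Lipschitz projection $\pi_H$.
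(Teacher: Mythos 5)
The paper does not actually supply a proof of this lemma; it is cited from \cite[Lemma 1.20]{mj-pal} and used as a black box, so there is no in-paper argument to compare against. That said, your proposed strategy has a genuine gap that would have to be repaired before the plan could work.

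The problem is Step 1: the metrics $d_{pel}$ and $d_{el}$ are not quasi-isometric, so the intended transfer of hyperbolicity via quasigeodesic tracking cannot go through. Consider a single horosphere $H \cong E^1 \times L$ with $L$ unbounded (as happens for a $(\Z+\Z)$-cusp, where $L = \tilde{P} = \mathbb{R}$), and take $u = (x_0, 0)$, $v = (x_0, M)$ on $H$. Then $d_{el}(u,v) \leq 1$, but $d_{pel}(u,v) = M$: moving on $H$ purely in the $L$-direction costs exactly $M$; the mapping-cylinder route costs $M+2$; and any detour off $H$ into $X$ (dropping to smaller $z$ in the upper-half-space model) only lengthens the path since the horoball side is excluded. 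Thus the $d_{pel}$-geodesic from $u$ to $v$ has $d_{pel}$-length $M$ while its endpoints are at $d_{el}$-distance $\leq 1$, so it cannot be a $(K,\epsilon)$-quasigeodesic in $(X,d_{el})$ with constants independent of $M$. The parenthetical justification you offer ("$\sum |l_p - l_q|$ is comparable to the omitted $d_X$-excursion inside each $H$") does not rescue this: what needs to be controlled for the comparison with $d_{el}$ is the $L$-excursion itself, and that is precisely the quantity that is unbounded. Since Steps 2 and 3 take Step 1 as input, the chain does not close.

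Your observation in Step 2 — that $\pi_H$ is $1$-Lipschitz so $d_X$-close entry/exit points have close $L$-coordinates and hence are $d_{pel}$-close — is correct and is a useful ingredient, but it must be deployed in a framework that does not route through $d_{el}$. The sentence the paper places immediately after the lemma ("The resulting space is essentially what one would get ... by gluing to each $H$ the mapping cylinder of the projection of $H$ onto the $L$-factor") points to the right picture: one should treat partial electrocution on its own terms via the mapping-cylinder model, applying the Farb--Bowditch machinery to the pieces that are actually collapsed, rather than trying to sandwich $d_{pel}$ between $d_{el}$ and $d_X$ and transfer hyperbolicity across what is not a quasi-isometry.
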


\subsection{Cannon-Thurston Maps}

Let $(X,{d_X})$ and $(Y,{d_Y})$ be  hyperbolic metric spaces.
By
adjoining the Gromov boundaries $\partial{X}$ and $\partial{Y}$
 to $X$ and $Y$, one obtains their compactifications
$\widehat{X}$ and $\widehat{Y}$ respectively.

Let $ i :Y \rightarrow X$ denote a proper map.

\begin{definition}   Let $X$ and $Y$ be hyperbolic metric spaces and
$i : Y \rightarrow X$ be a proper map.
 A {\bf Cannon-Thurston map} $\hat{i}$  from $\widehat{Y}$ to
 $\widehat{X}$ is a continuous extension of $i$.
\end{definition}

Lemma 2.1 of \cite{mitra-trees} below gives a necessary and sufficient condition for the existence of Cannon-Thurston maps.

\begin{lemma} \cite{mitra-trees}
A Cannon-Thurston map from $\widehat{Y}$ to $\widehat{X}$
 exists iff  the following condition is satisfied:\\
Given ${y_0}\in{Y}$, there exists a non-negative function  $f(n)$, such that 
 $f(n)\rightarrow\infty$ as $n\rightarrow\infty$ and for all geodesic segments
 $\lambda$  lying outside an $n$-ball
around ${y_0}\in{Y}$  any geodesic segment in $X$ joining
the end-points of $i(\lambda)$ lies outside the $f(n)$-ball around 
$i({y_0})\in{X}$.
\label{contlemma}
\end{lemma}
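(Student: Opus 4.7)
The plan is to use the standard characterization of boundary convergence in Gromov hyperbolic spaces via Gromov products: in a hyperbolic space, a sequence $y_n$ converges to a point of $\partial Y$ iff $(y_n \cdot y_m)_{y_0}\to\infty$, and this in turn is equivalent (up to additive constants depending only on $\delta$) to saying that geodesic segments $[y_n,y_m]$ exit every bounded neighbourhood of $y_0$. I would translate the hypothesis on $f(n)$ into an inequality on Gromov products: if $\lambda$ lies outside the $n$-ball about $y_0$ then $(a \cdot b)_{y_0} \gtrsim n$ for its endpoints, and under the hypothesis any geodesic in $X$ between $i(a),i(b)$ stays outside the $f(n)$-ball about $i(y_0)$, so $(i(a) \cdot i(b))_{i(y_0)} \gtrsim f(n)\to\infty$.

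For the sufficiency direction, given the hypothesised function $f$, I would define $\hat{i}$ on $\partial Y$ by $\hat{i}(\xi) := \lim_n i(y_n)$ for any sequence $y_n\to\xi$ in $Y$. To see this is well-defined: if $y_n,z_n \to \xi$, then after re-indexing the segments $[y_n,z_n]$ exit every ball around $y_0$, so by hypothesis the $X$-geodesics between $i(y_n)$ and $i(z_n)$ leave every ball around $i(y_0)$; this simultaneously shows the Gromov sequence $\{i(y_n)\}$ is Cauchy in the boundary sense (hence converges in $\widehat X$) and that the limit is independent of the approximating sequence. For continuity at a point $\xi\in\partial Y$, take $\xi_k\to\xi$ in $\widehat Y$, pick $y_n^k\in Y$ with $y_n^k\to \xi_k$, and run a diagonal extraction: the hypothesis applied to segments $[y_n^k, y_{n'}^{k'}]$ forces $\hat{i}(\xi_k)\to\hat{i}(\xi)$ in $\widehat X$. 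Continuity at points of $Y$ is automatic since $i$ is continuous.

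For the necessity direction, I would argue by contrapositive. Suppose $\hat{i}$ exists continuously but the condition fails: there is $R>0$ and a sequence of geodesic segments $\lambda_k=[a_k,b_k]$ with $d_Y(y_0,\lambda_k)\to\infty$, while some $X$-geodesic joining $i(a_k)$ to $i(b_k)$ meets the $R$-ball about $i(y_0)$. Hyperbolicity of $Y$ gives $(a_k\cdot b_k)_{y_0}\to\infty$, so after extracting subsequences, $a_k$ and $b_k$ converge to a common point $\alpha\in\partial Y$. Properness of $i$ yields $d_X(i(y_0),i(a_k))\to\infty$ and similarly for $b_k$, so $\hat{i}(\alpha)\in\partial X$; continuity of $\hat{i}$ then forces $i(a_k)$ and $i(b_k)$ to converge to the same boundary point of $X$, i.e.\ $(i(a_k)\cdot i(b_k))_{i(y_0)}\to\infty$, contradicting the bounded distance from $i(y_0)$ to the $X$-geodesic $[i(a_k),i(b_k)]$.

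The main technical difficulty lies in the continuity step of the sufficiency direction: one must pass from the ``interior-to-interior'' statement supplied by the hypothesis to a statement about convergence of boundary values, which requires the diagonal argument to be carried out uniformly enough that the function $f$ can be invoked along a single scale parameter. Properness of $i$ (to guarantee boundary points map to boundary points in the necessity direction) and the thin-triangles inequality (to convert between ``geodesic far from $y_0$'' and ``large Gromov product'') are the two ambient facts one relies on throughout.
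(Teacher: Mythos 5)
The paper does not include its own proof of this lemma; it is cited directly from Mitra's paper \cite{mitra-trees}. Your argument is essentially the standard proof that appears in that reference (and in subsequent expositions): sufficiency via extending $i$ by limits along Gromov sequences and controlling the Gromov product with the hypothesised function $f$, necessity by contrapositive using that a geodesic far from the basepoint has endpoints with large Gromov product, together with properness of $i$ to ensure boundary points land on boundary points. Two small remarks worth keeping in mind when writing it out in full: (i) in the well-definedness step you also need $(i(y_n)\cdot i(y_m))_{i(y_0)}\to\infty$ (not just $(i(y_n)\cdot i(z_n))_{i(y_0)}\to\infty$) to see that the image sequence actually converges in $\widehat X$, which follows by the same use of the hypothesis applied to $[y_n,y_m]$; and (ii) in the necessity step, to extract a convergent subsequence from $a_k$ one is tacitly using sequential compactness of $\widehat Y$, which holds when $Y$ is proper---an assumption that is always satisfied in the settings of the paper but should be stated. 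Otherwise the plan is sound and matches the cited source.
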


We shall now give a criterion for the existence of Cannon-Thurston maps between relatively hyperbolic spaces.
Let $X$ and $Y$ be strongly   hyperbolic relative to the collections $\HH_X$ and $\HH_Y$ respectively. Let $i\colon Y\to X$ be a weakly type-preserving proper embedding, i.e. for $H_Y\in \HH_Y$ there exists $H_X\in \HH_X$ such that $i(H_Y)\subset H_X$ and images of distinct elements of  $\HH_Y$   lie in distinct elements of $\HH_X$.

In the Lemma below, we specialize to the case where $X, Y$ are convex simply connected complete hyperbolic manifolds with some disjoint (open) horoballs removed. $\HH_X$ and $\HH_Y$
will denote the resulting collections of horospheres.

\begin{lemma}\label{crit-relhyp} \cite[Lemma 1.28]{mj-pal}
A Cannon-Thurston map for a weakly type-preserving proper embedding $i\colon Y \to X$ exists if and only if
there exists a non-negative function $f(n)$ with
$f(n)\rightarrow \infty$ as $n\rightarrow \infty$ such that the
following holds: \\
Suppose $y_0\in Y$,  and $\hat \lambda$ in $\widehat{Y} = \EE(Y, \HH_Y)$ is
an electric quasigeodesic segment  starting and ending  outside horospheres.
If  $\lambda^b = \hat \lambda \setminus \bigcup_{K \in \HH_Y} K$
 lies outside  $B_n (y_0) \subset Y$,
then for any electric quasigeodesic $\hat \beta$ joining the
end points of $\hat i (\hat \lambda)$ in $\widehat{X}= \EE(X, \HH_X)$,
$\beta ^b = \hat \beta \setminus \bigcup_{H \in \HH_X} H$ lies outside
 $B_{f(n)} (i(y_0)) \subset X$.

\end{lemma}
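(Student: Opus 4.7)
The plan is to adapt the standard Cannon-Thurston criterion of Lemma \ref{contlemma} to the relatively hyperbolic setting, working directly with the Bowditch compactifications. Parabolic fixed points of $\partial \widehat Y$ correspond to parabolic fixed points of $\partial \widehat X$ canonically under the weakly type-preserving hypothesis, so the content of the lemma concerns the extension of $\hat i$ at conical limit points. The electric spaces $\EE(Y,\HH_Y)$ and $\EE(X,\HH_X)$ are Gromov hyperbolic by Lemma \ref{farb1A}, and the natural tool for passing between the electric metric $d_e$ (in which the quasigeodesics $\hat\lambda, \hat\beta$ live) and the ambient metrics $d_Y, d_X$ (in which the ball conditions are stated) is the electro-ambient representative of Lemmas \ref{ea-strong} and \ref{ea-genl}.

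For the direction $(\Leftarrow)$, I would show that for a sequence $y_k\to\xi\in\partial\widehat Y$ in the Bowditch topology with $\xi$ conical, the images $i(y_k)$ converge in $\partial\widehat X$. Consider electric geodesics $\hat\lambda_k$ from $y_0$ to $y_k$; by conicality of $\xi$ (equivalently, the fact that $y_k$ does not eventually remain in any single horosphere), for each $N$ a terminal subsegment of $\hat\lambda_k$ has its bones outside $B_N(y_0)$ in $d_Y$ for all large $k$. Applying the criterion forces the corresponding electric geodesic in $\widehat X$ to have bones outside $B_{f(N)}(i(y_0))$ in $d_X$, so $d_X(i(y_0),i(y_k))\to\infty$; a Gromov-product argument in the hyperbolic electric space $\widehat X$ then upgrades this to convergence of $i(y_k)$ to a unique boundary point depending only on $\xi$.

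For $(\Rightarrow)$ I would argue by contrapositive. If the criterion fails, extract sequences $p_k\in\lambda_k^b$ with $d_Y(y_0,p_k)\to\infty$ and $q_k\in\beta_k^b$ with $d_X(i(y_0),q_k)\leq R$. Because $\hat\lambda_k$ starts and ends outside horospheres, Lemmas \ref{ea-strong} and \ref{ea-genl} produce electro-ambient representatives that are genuine $d_Y$-quasigeodesics passing near each $p_k$, forcing the endpoints of $\hat\lambda_k$ to escape to infinity in the Bowditch sense. Continuity of $\hat i$ at the resulting boundary limits of these endpoints sends $i(\partial\hat\lambda_k)$ to the Bowditch boundary of $\widehat X$, so any electric geodesic $\hat\beta_k$ joining these images must eventually leave every fixed $d_X$-bounded neighborhood of $i(y_0)$, contradicting the bound on $q_k$.

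The main obstacle is precisely the discrepancy between electric and ambient distances: a point on $\lambda^b$ that is $d_Y$-far from $y_0$ can be $d_e$-close via a horosphere shortcut linking $y_0$ and the point, so the translation between the two pictures cannot be pointwise. The electro-ambient replacement is the essential bridge, producing a bona fide $d_Y$-quasigeodesic from the electric data; its quasigeodesic property relies on the mutual coboundedness of disjoint horoballs, which holds automatically in the hyperbolic 3-manifold setting. An equivalent route, ultimately cosmetic, routes the argument through the partially electrocuted hyperbolic spaces $(Y,d_{pel}),(X,d_{pel})$ of Lemma \ref{pel}, but the same comparison between $d_Y$-ball conditions and $d_e$-geodesic behavior lies at the heart of either approach.
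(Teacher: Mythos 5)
The paper does not contain its own proof of this lemma; it is cited verbatim from \cite[Lemma 1.28]{mj-pal}. So I can only assess your proposal against the statement and against the machinery visible in the paper. Your overall framework (adapt the Mitra criterion of Lemma \ref{contlemma} to the relative setting, use electro-ambient representatives as the bridge between the electric metric $d_e$ and the ambient metric, reduce to conical points) is the right one, but there are two places where the argument as written would not close.

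In the direction $(\Leftarrow)$ you apply the criterion to terminal subsegments of electric geodesics $[y_0,y_k]$ and deduce $d_X(i(y_0),i(y_k))\to\infty$, then invoke "a Gromov-product argument" to get convergence. But mere divergence of $d_X(i(y_0),i(y_k))$ does not give convergence of $i(y_k)$ to a boundary point, and the criterion applied to $[y_0,y_k]$ gives no control at all over the Gromov products $\bigl(i(y_k)\cdot i(y_m)\bigr)_{i(y_0)}$, which are what you actually need. The standard argument applies the criterion to the electric geodesics $[y_k,y_m]$: since $y_k,y_m\to\xi$ with $\xi$ conical, those geodesics have bones outside $B_N(y_0)$ for $k,m$ large, and the criterion pushes the corresponding $\widehat X$-geodesics outside $B_{f(N)}(i(y_0))$, which is exactly the statement that the relevant Gromov products diverge. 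Without that step, the "Gromov-product argument" is a label, not an argument.

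In the direction $(\Rightarrow)$ the sentence "Continuity of $\hat i$ \dots\ sends $i(\partial\hat\lambda_k)$ to the Bowditch boundary of $\widehat X$, so any electric geodesic $\hat\beta_k$ joining these images must eventually leave every fixed $d_X$-bounded neighborhood" is not a valid inference. If the endpoints $a_k,b_k$ of $\hat\lambda_k$ were to converge to \emph{distinct} boundary points $\alpha\neq\beta$ of $\widehat Y$ with $\hat i(\alpha)\neq\hat i(\beta)$, then $\hat\beta_k$ would converge to the bi-infinite geodesic $[\hat i(\alpha),\hat i(\beta)]$, which may well pass through the fixed ball $B_R(i(y_0))$, and there is no contradiction. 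What rescues the argument is that $\alpha=\beta$ is in fact forced; but proving this from the hypothesis that $\lambda_k^b$ escapes in the \emph{ambient} metric $d_Y$ is exactly the delicate $d_Y$-versus-$d_e$ comparison you flag as "the main obstacle" and then do not resolve. An electric geodesic whose bones lie $d_Y$-far from $y_0$ can still have points $d_e$-close to $y_0$ by plunging into a horoball near $y_0$; so one cannot immediately conclude that $\hat\lambda_k$ escapes $d_e$-balls, hence cannot immediately conclude $\alpha=\beta$. This is precisely where the partially electrocuted framework of Lemma \ref{pel} is substantive rather than "ultimately cosmetic": the pseudometric $d_{pel}$ retains a nontrivial metric along the cusp direction, so a point deep in a horosphere whose entry point is $d_Y$-far from $y_0$ is also $d_{pel}$-far, and the escape hypothesis translates cleanly. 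You have identified the right tool but then waved it away; the proof should commit to it, carry out the comparison, establish $\alpha=\beta$, deduce divergence of $\bigl(\hat i(a_k)\cdot\hat i(b_k)\bigr)_{i(y_0)}$ from continuity of $\hat i$, and only then conclude that $\hat\beta_k$ (and hence $\beta_k^b$, since $d_e\leq d_X$) leaves $B_R(i(y_0))$.

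A smaller caveat: the opening reduction "the content of the lemma concerns the extension of $\hat i$ at conical limit points" defines the candidate map at parabolic points but does not give continuity there for free; that too must be checked against the criterion, though this is routine.
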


We shall describe this informally as follows: \\
{\it If $\lambda$ lies outside a large ball {\bf modulo horoballs} in $Y$ then so does any geodesic in $X$ joining its endpoints.}

In \cite{mahan-red} we proved the existence of 
Cannon-Thurston maps for Kleinian groups corresponding to
 pared manifolds whose  boundary is incompressible away from cusps.

\begin{definition} A {\bf pared manifold} is a pair $(M,P)$, where $P$, contained in the boundary
$ \partial M$ of $M$, 
is a (possibly empty) 2-dimensional submanifold with boundary such that \\
\begin{enumerate}
\item the fundamental group of each component of $P$ injects into the
fundamental group of $M$
\item  $P$ is a union of annuli and tori.
\item any cylinder $C: (S^1 \times I, \partial ( S^1 \times I)) \rightarrow (M,P)$
such that $\pi_1 (C)$ is injective is homotopic {\it rel} boundary into $P$.
\item $P$ contains every torus  component of $\partial M$.
\end{enumerate}
\end{definition}

A hyperbolic structure adapted to $(M,P)$ is a hyperbolic structure on $M$ such that the parabolics are precisely
the elements of $P$. The collection of such structures is denoted as $H(M,P)$.

 A pared manifold $(M,P)$ is said to have {\bf
  incompressible boundary} 
if each component of $\partial_0 M = \partial M \setminus P$ is
incompressible in $M$.

The following  Theorem summarizes the main results of \cite{mahan-split, mahan-elct, mahan-red}. It proves the existence of 
Cannon-Thurston maps for Kleinian groups corresponding to
 pared manifolds whose  boundary is incompressible away from cusps. It also describes the structure of these maps in terms of ending laminations.

\begin{theorem} \cite{mahan-red}
Suppose that $N^h \in H(M,P)$ is a hyperbolic structure 
on a pared manifold $(M,P)$ with incompressible boundary. Let
$M_{gf}$ denote a geometrically finite hyperbolic structure adapted
to $(M,P)$. Then the map  $i: \widetilde{M_{gf}}
\rightarrow \widetilde{N^h}$ extends continuously to the boundary
$\partial {i}: \partial \widetilde{M_{gf}}
\rightarrow \partial \widetilde{N^h}$. 

Let $E$ be a degenerate end of $N^h$ and $\til E$ a lift of $E$ to $\til{N^h}$. Then the ending lamination $\LL_E$ for the end $E$ lifts to a lamination
on $\til{M_{gf}} \cap \til{E}$. Each such lift $\LL$ of the ending lamination of a degenerate end defines a relation $\RR_\LL$ on the (Gromov) hyperbolic boundary $ \partial \widetilde{M_{gf}}$ given by
$a\RR_\LL b$ iff $a, b$ are  end-points of a leaf of $\LL$. Let $\{ \RR_i \}_i$ be the entire collection of relations on $ \partial \widetilde{M_{gf}}$ obtained this way. Let $\RR$ be the transitive closure of
the union $\bigcup_i \RR_i$. Then $\partial i(a) = \partial i(b)$ iff $a\RR b$.
\label{ptpre-inde}
\end{theorem}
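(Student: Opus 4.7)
The plan is to combine the split-geometry machinery reviewed in Section \ref{min} (from \cite{mahan-split}) with the point-preimage analysis of \cite{mahan-elct} and the cusp-handling of \cite{mahan-red}. Since the pared manifold $(M,P)$ has incompressible boundary, after excising a standard neighborhood of the cusps, the manifold $N^h$ decomposes as a relative compact core together with finitely many degenerate ends, each homeomorphic to $S\times[0,\infty)$ for an incompressible subsurface $S\subset \partial_0 M$. On each such end $E$, the split-geometry model provides a sequence of level surfaces $\Sigma_i$ exiting $E$ and a graph metric $d_G$ on $\til E$ obtained by electrocuting lifts of split components.

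First I would establish existence of the extension $\partial i$ by verifying the relative-hyperbolic Cannon-Thurston criterion of Lemma \ref{crit-relhyp}, after partially electrocuting the horospheres in $\til{M_{gf}}$ and $\til{N^h}$ (Lemma \ref{pel}). Given an electric quasigeodesic $\hat\lambda$ whose non-horospherical part $\lambda^b$ lies outside the $n$-ball around a fixed $y_0 \in \til{M_{gf}}$, one needs a function $f(n)\to\infty$ such that any electric quasigeodesic in $\til{N^h}$ joining the endpoints of $\hat i(\hat\lambda)$ has its non-horospherical part outside the $f(n)$-ball around $i(y_0)$. The central construction is a \emph{hyperbolic ladder} $L_\lambda$ built over $\lambda$: one projects $\lambda$ onto the successive split surfaces $\til{\Sigma_i}$ using the bounded-geometry or split-block structure dictated by the Minsky model, and glues the pieces together along vertical segments across the blocks. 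Two properties (both proved in \cite{mahan-split} under incompressibility) do all the work: $L_\lambda$ is $d_G$-quasiconvex, and the nearest-point retraction $\Pi_\lambda: \til{N^h} \to L_\lambda$ is coarsely Lipschitz. Combining with Lemma \ref{ea-strong} to pass between electro-ambient and honest hyperbolic geodesics, these force any geodesic joining the endpoints of $i(\lambda)$ to stay $d_G$-close to $L_\lambda$; since $L_\lambda$ inherits its exiting behavior from $\lambda^b$, this yields the desired $f$.

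For the point-preimage description I would follow the scheme of \cite{mahan-elct}. The easy direction is that if $a,b$ are endpoints of a leaf of an ending lamination $\LL_E$ (or of a complementary ideal polygon), then the leaf is a limit of curves whose lifts to $\til{\Sigma_i}$ shrink to zero $d_G$-length, so rays in $\til{M_{gf}}$ converging to $a$ and $b$ have images whose ladder-distance stays bounded, forcing $\partial i(a)=\partial i(b)$. Closing under the transitive closure $\RR$ is then formal.

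The main obstacle is the converse: given $\partial i(a)=\partial i(b)$ with $a\neq b$, one must produce a chain of leaves in $\bigcup_i \RR_i$ linking $a$ and $b$. The strategy is to take approximating geodesic segments $\lambda_n\subset \til{M_{gf}}$ from $y_n\to a$ to $z_n\to b$ and argue via the ladder construction that the $\lambda_n$ must track arbitrarily closely to leaves of the lifted ending laminations in each degenerate end entered by $L_{\lambda_n}$. The delicate point is handling transitions between distinct ends and ruling out accidental coincidences not arising from leaves; this relies on the bounded-penetration property (Lemma \ref{farb1A}) and on the rigidity of the Minsky model in the incompressible-boundary setting, which together force the extracted Hausdorff limits on each $\til{\Sigma_i}$ to be actual lamination leaves rather than random alignments.
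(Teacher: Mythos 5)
The theorem you are being asked to prove is not actually proved in this paper: it is stated with a citation to \cite{mahan-red} and is explicitly described as a summary of results established in \cite{mahan-split}, \cite{mahan-elct} and \cite{mahan-red}, so there is no in-paper argument to compare against. With that caveat, your reconstruction of the strategy in those references is largely faithful on the existence side: the hyperbolic ladder $L_\lambda$ built over $\lambda$ by projecting onto the exiting split surfaces is indeed the central device, and its two engine properties --- $d_G$-quasiconvexity and a coarsely Lipschitz nearest-point retraction $\Pi_\lambda$ --- are the correct technical pillars, fed into the relative-hyperbolic Cannon-Thurston criterion of Lemma \ref{crit-relhyp} after partially electrocuting horospheres (Lemma \ref{pel}) and passing between electro-ambient and genuine geodesics via Lemma \ref{ea-strong}.

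Where your proposal is under-justified is on the point-preimage characterization, in both directions. For the implication (EL $\Rightarrow$ CT), the mechanism is not that "lifts to $\til{\Sigma_i}$ shrink to zero $d_G$-length"; the argument (compare Proposition \ref{ptpreimage} in this paper, which transplants the \cite{mahan-elct} argument to the handlebody case) is a trapezium construction: one takes lifts $a_i$ on $\til S$, lifts $s_i$ of exiting short geodesics, and two connecting lifts $t_{1i}, t_{2i}$ of a distance-minimizing arc, then shows the concatenation lies outside large balls because $t_{1i}, t_{2i}$ make definite progress out the end. The $d_G$-length of the approximating curves plays no direct role; what matters is the vertical/horizontal decomposition and thin triangles in $\mathbb{H}^3$. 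For the harder implication (CT $\Rightarrow$ EL), asserting that "rigidity of the Minsky model" together with bounded penetration forces "actual lamination leaves rather than random alignments" is too vague to be a proof. The actual argument in \cite{mahan-elct} tracks the projections of the approximating segments onto the split level surfaces as they exit and identifies their Hausdorff limits with leaves of the ending lamination; in addition, one has to deal explicitly with transitions between distinct degenerate ends (this is why the statement uses a transitive closure $\RR$ of the individual end-relations $\RR_i$), which your sketch mentions but does not address. These two pieces need to be supplied before the argument is complete.
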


\section{Split Geometry} \label{min}

We recapitulate the essential aspects of split geometry from \cite{minsky-elc1, mahan-split}.

\smallskip

\noindent {\bf Split level Surfaces}\\
A   {\bf pants decomposition} of a compact surface $S$, possibly with boundary,
is a disjoint collection of
3-holed spheres $P_1, \cdots , P_n$ embedded in $S$ such that $S \setminus \bigcup_i P_i$ is a disjoint collection of non-peripheral
annuli in $S$, no two of which are  homotopic. 

Let $N$ be the convex core of a  hyperbolic 3-manifold minus an open neighborhood of the cusp(s). Then any end $E$ of $N$ is simply degenerate
 \cite{agol-tameness, gab-cal, canary} and
 homeomorphic to $S \times [0, \infty )$, where $S$ is a compact surface, possibly with boundary. A closed geodesic in an end $E$ 
 homeomorphic to $S \times [0, \infty )$  is {\bf unknotted} if it is isotopic in $E$ to a simple closed curve in $S \times \{0 \}$ via the homeomorphism.
A {\bf tube} in an end $E \subset N$ is a regular $R-$neighborhood
$N(\gamma, R)$ of an unknotted geodesic $\gamma$ in $E$.

Let $\TT$ denote a collection of disjoint, uniformly separated tubes in ends of  $N$ 
such that

\begin{enumerate}
\item[a)] Otal \cite{otal-unknot} showed the existence of an $\epsilon_{ot} >0$ (depending only on the topology of the end $E$)
such that any primitive closed geodesic of length at most $\epsilon_{ot} >0$ (referred to below as the Margulis-Otal constant)
is unknotted.  We shall refer to tubes around such geodesics as
 Margulis tubes (strictly speaking, $\epsilon_{ot}$ is smaller than the Margulis constant; so this is a slight abuse of terminology).
 All Margulis tubes in $E$ belong to $\TT$ for all ends $E$ of $N$.
\item[b)] there exists $\epsilon_0 >0$ such that the injectivity radius $injrad_x(E) > \epsilon_0$ for all $x \in E \setminus  \bigcup_{T \in \TT} Int(T)$ and all ends $E$ of $N$.
\end{enumerate}

In \cite{minsky-elc1}, Minsky constructs a
model manifold $M$ biLipschitz homeomorphic to $N$ and equipped with a piecewise Riemannian structure. We shall refer to this
model as the {\bf Minsky model}.
The features we shall use for $M$ will be given below.
For the time being, we note that $M$ has a collection of tubes, each with a Euclidean structure on its boundary.
The complement of these tubes may be decomposed as a union of blocks of some standard types. A Lipschitz map $F$ was constructed in 
\cite{minsky-elc1} from $N$ to $M$ and it was shown by Brock-Canary-Minsky \cite{minsky-elc2} (see \cite{bowditch-endinv} for the general Kleinian groups
case) that $F$ was, in fact bi-Lipschitz.
\footnote{
The bi-Lipschitz homeomorphism between the model manifold $M$ and the hyperbolic manifold $N$
is established by Brock-Canary-Minsky in \cite{minsky-elc2} for 
(pared) manifolds with incompressible boundary. The extension to the general case was sketched briefly in 
\cite{minsky-elc2}, but details have not appeared in print. The paper \cite{bowditch-endinv} is also
in preprint form as of date. To get around this, we provide a brief sketch in the Appendix
\ref{app} using published work of several authors, reducing the general case to the case with incompressible boundary.}
Thus, let $F: N \rightarrow M$ be a  bi-Lipschitz homeomorphism to the model manifold $M$ and let $M(0)$
be the image of $N \setminus \bigcup_{T \in \TT} Int(T)$ in $M$ under   $F$.
 Let $\partial M(0)$ (resp. $\partial M$) denote the  boundary  of $M(0)$ (resp. $M$). The metrics on $M$ and $\til M$ will be denoted by $d_M$.

Let $(Q, \partial Q)$ be the unique hyperbolic  pair of pants such that each  component
of $\partial Q$ has length one. $Q$ will be called
the {\it standard} pair of pants.
An isometrically embedded copy of $(Q, \partial Q)$ in $(M(0), \partial M(0))$ will be said to be {\it flat}.

\begin{defn} {\rm A {\bf split level surface} associated to a pants decomposition $\{ Q_1, \cdots , Q_n \}$ of 
a compact surface $S$ (possibly with boundary) in $M(0) \subset M$
is an embedding $f : \cup_i (Q_i, \partial Q_i) \rightarrow (M(0), \partial M(0))$ such that \\
1) Each $f (Q_i, \partial Q_i)$ is flat \\
2) $f$ extends to an embedding (also denoted $f$) of $S$ into $M$ such that the interior of each annulus component of
$f(S \setminus \bigcup_i Q_i)$  lies entirely in $F(\bigcup_{T \in \TT} Int(T))$. \\
} \end{defn}

Let
$S_{i}^{s}$ denote the union of the collection of flat pairs of pants
 in the image of the embedding  $S_{i}$.  Note that $S_i \setminus S_{i}^{s}$ consists of annuli properly embedded in Margulis tubes.

The class   of {\it all} topological embeddings from $S$ to $M$ that agree with a split level surface $f$ 
associated to a pants decomposition $\{ Q_1, \cdots , Q_n \}$ on 
$Q_1 \cup \cdots \cup Q_n$ will be denoted by $[f]$. 

We define a partial order $\leq_E$ on the collection of split level surfaces in an end $E$ of $M$ as follows: \\
$f_1 \leq_E f_2$ if there exist $g_i \in [f_i]$, $i=1,2$, such that $g_2(S)$ lies in the unbounded component of $E \setminus g_1(S)$.

A sequence $S_i$ of split level surfaces is said to exit an end $E$ if $i<j$ implies $S_i \leq_E S_j$ and further for all compact subsets $B \subset E$, there exists
$L>0$ such that $S_i \cap B = \emptyset$ for all $i \geq L$.

\begin{definition}\label{thin}
  A curve $v$ in $S \subset E$ is {\bf $l$-thin} if the core curve of the Margulis tube $T_v (\subset E \subset N)$ has length less than or equal to $l$. 
A   tube $T\in \TT$  is   $l$-thin  if its core curve    is   $l$-thin.  A   tube $T\in \TT$  is   $l$-thick if it is not    $l$-thin.  \\
A curve $v$ is said to split a pair of split level surfaces $S_i$ and $S_j$ ($i<j$) if $v$ occurs as a boundary curve of
 both $S_i$ and $S_{j}$. A pair of split level surfaces $S_i$ and $S_j$ ($i<j$) is said to be an {\bf $l$-thin pair} if there exists an   $l$-thin curve $v$ 
 splitting both  $S_i$ and $S_{j}$.  \\

\end{definition}

The collection of all  $l$-thin tubes is denoted as $\TT_l$. The union of all  $l$-thick tubes along with $M(0)$  is denoted as $M(l)$.
Unless otherwise indicated, we shall choose $l$ to be the Margulis-Otal constant, though the discussion below will go through for any
$l<\epsilon_{ot}$.

\begin{defn}
A pair of split level surfaces $S_i$ and $S_j$ ($i<j$) is said to be {\bf $k$-separated} if \\
a) for all $x \in S_i^s$, 
$d_M(x,S_j^s) \geq k$\\
b)  Similarly, for all $x \in S_j^s$, $d_M(x,S_i^s) \geq k$. \end{defn}

\begin{defn} {\rm An $L$-bi-Lipschitz {\bf split  surface} in $M(l)$ associated to a pants decomposition $\{ Q_1, \cdots , Q_n \}$ of $S$
and a collection $\{ A_1, \cdots , A_m \}$ of complementary annuli (not necessarily all of them) in $S$ 
is an embedding $f : \cup_i Q_i \bigcup  \cup_i A_i \rightarrow M(l)$ such that\\
1) the restriction  $f: \cup_i (Q_i, \partial Q_i) \rightarrow (M(0), \partial M(0))$ is a split level surface \\
2) the restriction $f: A_i \rightarrow M(l)$ is an $L$-bi-Lipschitz embedding.\\
3)  $f$ extends to an embedding (also denoted $f$) of $S$ into $M$ such that the interior of each annulus component of
$f(S \setminus (\cup_i Q_i \bigcup  \cup_i A_i))$  lies entirely in $F(\bigcup_{T \in \TT_l} Int(T))$.}\end{defn}

\noindent {\bf Note:} The difference between a split level surface and a split surface is that the latter may contain
bi-Lipschitz annuli in addition to flat pairs of pants.

\smallskip

We denote split surfaces by  $\Sigma_{i}$ to distinguish them from split level surfaces $S_i$.
Let
$\Sigma_{i}^{s}$ denote the union of the collection of flat pairs of pants
and bi-Lipschitz annuli in the image of the split surface (embedding)  $\Sigma_{i}$. The next Theorem is one of the technical tools from
\cite{mahan-split}. For the convenience of the reader, we 
recall  the following representative situation from the Introduction to \cite{mahan-split}.

\begin{enumerate}
\item there exists a sequence
$\{ S_i \}$  of   disjoint,  embedded, bounded geometry surfaces exiting  $E$. These are ordered in a natural way along $E$, i.e. $i<j$ implies that
$S_j$ is contained in the unbounded component of $E \setminus S_i$. The topological product region  between $S_i$ and $S_{i+1}$ is denoted $B_i$.
\item Each product region $B_i$ is of two types: \\
Either the product region $B_i$ is {\bf thick}, i.e. there exists a uniform (independent of $i$) constant $K' \geq 1$
such that $B_i$ is $K'-$biLipschitz homeomorphic to $S_i \times [0,1]$. Such product regions are called {\bf thick blocks};\ \\
Or, corresponding to the product region $B_i$, there exists a Margulis tube $T_i$ such that $T_i \subset B_i$. Further, 
 $T_i \cap S_i$ and  $T_i \cap S_{i+1}$ are annuli on $S_i$ and $S_{i+1}$ respectively, with core curves homotopic to the  core curve of $T_i$. These are examples of {\bf split blocks}.
\end{enumerate}

Thus, for split blocks,
the $T_i$ {\it split} both $S_i$ and $S_{i+1}$. The complementary pieces (and their lifts to the universal cover) are examples of
{\it split components}. Note that we have little control, however, on the geometry of the split components. This situation generalizes 
to give a sequence of split  surfaces (rather than actual surfaces as in the representative situation above)
exiting the end, such that successive pairs are split by some Margulis tubes:

\begin{theorem} \cite[Theorem 4.8]{mahan-split}
Let $N, M, M(0), S, F$ be as above and $E$ an end of $M$. Fix $l$ less than the Margulis-Otal constant, and
let $M(l) = \{ F(x) : {\rm injrad_x} (N) \geq l \}$. Fix a hyperbolic metric on $S$ such that each component of $\partial S$ is 
totally geodesic of length one (this is a normalization condition).
 There exist $ L_1 \geq 1$, $  \epsilon_1 > 0$, $n \in \natls$, 
 and a sequence $\Sigma_i$ of $L_1$-bi-Lipschitz, $  \epsilon_1$-separated split  surfaces exiting the end $E$ of $M$
such that for all $i$, one of the following occurs: \\
\begin{enumerate}
\item An $l$-thin curve $v$ splits the pair $(\Sigma_i ,\Sigma_{i+1})$, i.e. $v$ splits the associated split level surfaces $(S_i ,S_{i+1})$, which in turn  form
an $l$-thin pair. 
\item there exists an $L_1$-bi-Lipschitz embedding  $$G_i: (S\times [0,1], (\partial S)\times [0,1]) \rightarrow (M, \partial M),$$ (equipping
$S\times [0,1]$ with the product metric)
such that $\Sigma_i^s = G_i (S\times \{ 0\})$ and $\Sigma_{i+1}^s = G_i (S\times \{ 1\})$
\end{enumerate}
Finally, each $l$-thin curve in $S$ splits at most
$n$ split level surfaces in the  sequence $\{ \Sigma_{i} \}$. \label{wsplit}
\end{theorem}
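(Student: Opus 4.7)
The plan is to build the split surfaces by exploiting the block decomposition of the Minsky model $M$, combined with a resolution of the associated hierarchy of pants decompositions of $S$. First I would fix the bi-Lipschitz homeomorphism $F\colon N \to M$ and, inside the end $E$, list the blocks of the Minsky model in their natural order going out the end. Each block is either a thick block of uniformly bounded geometry (bi-Lipschitz to a standard model) or a tube block that sits inside the Margulis tube of some curve $v$ on $S$. This built-in combinatorial/geometric data is exactly what must be converted into the split surfaces $\Sigma_i$.

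The second step is to extract, at each stage $i$, a pants decomposition $P_i$ of $S$ whose curves are either $l$-thick curves appearing as boundaries of the model blocks or the core $l$-thin curves of the tubes currently being crossed. Masur-Minsky hierarchy resolutions (as used in \cite{minsky-elc1,minsky-elc2}) give such a sequence of slices $P_i$ with the property that $P_i$ and $P_{i+1}$ differ by an elementary move across a single block. For each $P_i$ I would then realize each pair of pants in $P_i$ as a flat copy of the standard pair of pants $Q$ in $M(0)$; these exist by the Minsky structure on the thick part and can be taken $\epsilon_1$-separated (for some uniform $\epsilon_1 > 0$) because consecutive slices sit in consecutive blocks, whose diameter is bounded below by a Margulis-type constant. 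This produces a split level surface $S_i$ associated to $P_i$.

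To upgrade each $S_i$ to a split surface $\Sigma_i$ I would glue in $L_1$-bi-Lipschitz annuli across exactly those tubes whose core curves are $l$-thick (so the tube boundaries carry bounded-geometry flat annuli coming from the model); across $l$-thin tubes I leave the gap, so that an $l$-thin curve $v$ splitting $(\Sigma_i,\Sigma_{i+1})$ corresponds precisely to the tube $T_v$ sitting between them in the model. Case (1) of the theorem is now automatic: the pair $(\Sigma_i,\Sigma_{i+1})$ is $l$-thin exactly when an $l$-thin tube straddles the intervening block. In Case (2), consecutive blocks are all thick, so the product region between $\Sigma_i^s$ and $\Sigma_{i+1}^s$ is covered by finitely many thick model blocks which concatenate to an $L_1$-bi-Lipschitz embedding of $(S \times [0,1],(\partial S)\times[0,1])$; uniformity of $L_1$ comes from the fact that there are only finitely many topological types of thick model blocks (see \cite{minsky-elc1}).

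Finally, the bound $n$ on how many split level surfaces an $l$-thin curve $v$ can split is exactly the bound on the length of the hierarchy interval on which $v$ is active in a Masur-Minsky resolution; this is bounded in terms of the topological type of $S$ and the Margulis-Otal constant, using the bounded geometry projection estimates in \cite{minsky-elc1}. The main obstacle I anticipate is calibrating the constants $L_1$, $\epsilon_1$, and $n$ simultaneously: the bi-Lipschitz constant for the annuli and for the product regions both depend on the geometry of the standardized model blocks and on how close we allow split surfaces to approach the $l$-thin tubes, so one must pick $l$ small enough (below $\epsilon_{ot}$) and $\epsilon_1$ small enough that the flat-pants realizations in adjacent slices remain uniformly separated without the annulus gluings degenerating. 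Once these constants are chosen in the correct order (first $l$, then $L_1$, then $\epsilon_1$), the argument reduces to a uniform packaging of the Minsky model data already contained in the references.
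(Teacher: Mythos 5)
The first thing to note is that this paper does not actually prove Theorem \ref{wsplit}; it simply cites it from \cite[Theorem 4.8]{mahan-split} and recalls a representative special case for motivation. So a direct line-by-line comparison with the source proof is not possible here. That said, the overall strategy you outline --- reading off a hierarchy resolution from the Minsky model, building split level surfaces from the pants slices, upgrading to split surfaces by adjoining bi-Lipschitz annuli across thick tubes, and deriving the bound $n$ from the length of the interval on which a curve is active in the hierarchy --- is consistent with the remarks the paper makes immediately after the theorem statement (where it says explicitly that the dependence of $n$ on $S$ "is in terms of hierarchies and is explicated in \cite{mahan-split}") and with the block structure of the Minsky model recalled in Section \ref{min}.

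There is, however, an internal inconsistency in your handling of Case (2) that would need to be fixed. The theorem requires a \emph{single} $L_1$-bi-Lipschitz embedding $G_i\colon (S\times[0,1],(\partial S)\times[0,1])\to(M,\partial M)$ (with the product metric on the domain) taking $S\times\{0\}$ to $\Sigma_i^s$ and $S\times\{1\}$ to $\Sigma_{i+1}^s$. You first set up the split level surfaces so that consecutive slices $P_i,P_{i+1}$ differ by one elementary move across one model block, which would place at most one thick block between $\Sigma_i$ and $\Sigma_{i+1}$ and give the uniform $L_1$ essentially for free (since the internal thick blocks of the Minsky model are standard). But then, in Case (2), you describe the region between $\Sigma_i^s$ and $\Sigma_{i+1}^s$ as "covered by finitely many thick model blocks which concatenate," which contradicts your own set-up and, more importantly, would not automatically yield a uniform bi-Lipschitz constant: concatenating $k$ thick blocks each bi-Lipschitz to $S\times[0,1]$ gives a map $S\times[0,k]\to M$, and the rescaling $[0,k]\to[0,1]$ distorts lengths by the unbounded factor $k$. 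To get a single product with uniform $L_1$ you must either (a) insist, as in your first paragraph, that exactly one thick block separates consecutive split surfaces, so that $k=1$, or (b) explain how to reparametrize the vertical coordinate without blowing up the bi-Lipschitz constant, which would require controlling $k$. Option (a) is the natural choice and is what makes the whole construction work; you should adopt it consistently and drop the concatenation step.

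A secondary point worth flagging: you assert that the split level surfaces are $\epsilon_1$-separated "because consecutive slices sit in consecutive blocks, whose diameter is bounded below by a Margulis-type constant." The lower bound on separation does ultimately come from the model geometry, but the relevant quantity is the distance between the flat pairs of pants $\Sigma_i^s$ and $\Sigma_{i+1}^s$ (i.e. the separation condition is stated in Definition of $k$-separated purely in terms of $S_i^s,S_j^s$), not the diameter of a block. Because a split level surface may glance along a tube boundary, one needs to check that the pairs of pants of consecutive split levels do not come arbitrarily close along shared boundary circles; this uses the normalization in the model that boundary curves of the flat pairs of pants have a fixed length, together with a definite thickness of the blocks, and deserves to be stated explicitly rather than folded into a one-line appeal to the Margulis constant.
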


In Theorem \ref{wsplit} above, $n$ depends on the genus of the surface $S$. The exact nature of this dependence is
in terms of hierarchies and is explicated in \cite{mahan-split}; it is not important for this paper.
  
A model manifold $M$ all of whose ends are equipped with a collection of  exiting    split  surfaces satisfying the conclusions of Theorem \ref{wsplit} is said to be equipped
with a {\bf weak split geometry} structure.

As mentioned in Definition \ref{thin}, 
pairs of split surfaces satisfying Alternative (1) of Theorem \ref{wsplit} will be called an $l$-thin pair of split surfaces (or simply a thin
pair if $l$ is understood). Similarly, pairs of split surfaces satisfying Alternative (2) of Theorem \ref{wsplit} will be called an $l$-thick pair
(or simply a thick
pair) of split surfaces.

\begin{defn} Let $(\Sigma_i^s, \Sigma_{i+1}^s)$ be a thick pair of split surfaces in $ M$. 
The closure of the bounded component of
$M \setminus (\Sigma_i^s \cup \Sigma_{i+1}^s)$ between  $\Sigma_i^s, \Sigma_{i+1}^s$   will be called a thick block.\end{defn}

Note that a thick block is uniformly bi-Lipschitz to the product $S \times [0,1]$ and that its boundary components are 
$\Sigma_i^s, \Sigma_{i+1}^s$.

\begin{defn} Let $(\Sigma_i^s, \Sigma_{i+1}^s)$ be an $l$-thin pair of split surfaces in $M$
and $F(\TT_i)$ be the collection of $l$-thin Margulis tubes that split both $\Sigma_i^s, \Sigma_{i+1}^s$. The closure of the union of the
bounded components of
$M \setminus ((\Sigma_i^s \cup \Sigma_{i+1}^s)\bigcup_{F(T)\in F(\TT_i)} F(T))$  between  $\Sigma_i^s, \Sigma_{i+1}^s$  will be called a {\bf split block}.
The closure of any bounded component is called a {\bf split component}. 
\end{defn}

Note that each split component may contain Margulis tubes that {\em do not} split both $\Sigma_i^s, \Sigma_{i+1}^s$.

\begin{rmk}  {\rm   For each lift $\til{K} \subset
\til{M}$ of a split component $K$ of a split block of $M(l) \subset M$, 
there are lifts of $l$-thin Margulis tubes that share the boundary of  $\til{K}$ in $\til{M}$. Adjoining these lifts to 
$\til{K}$ we obtain {\bf extended split components}. Let $\KK^\prime$ denote the collection of extended split components  in $\til{M}$.
Denote  the collection of split components in $\til{M(l)} \subset \til{M}$ by $\KK$.
Let $\til{M(l)}$ denote the lift of $M(l)$ 
to $\til M$. 
Then the inclusion of $\til{M(l)}$ into $\til{M}$ gives a quasi-isometry between $\EE (\til{M(l)}, \KK)$ and
$\EE (\til{M}, \KK^\prime)$ equipped with the respective electric metrics. This  follows from the last assertion of Theorem \ref{wsplit}.

Note here that two split components may intersect along a flat subsurface along a common horizontal boundary component, and that two 
extended split components may intersect along Margulis tubes in addition. However, for the electrocution operation, this does not pose any problems.
This is because while electrocuting,
products with the unit interval of the form $\til{K} \times [0,\frac{1}{2}]$ are attached to
 $\til{M}$ by identifying $\til{K} \times \{ 0 \}$ with $\til{K} (\subset \til{M})$
and then each element of the collection $\til{K} \times \{ \frac{1}{2} \}$ is given the zero metric.

 The electric metric on $\EE (\til{M}, \KK^\prime)$ is called the {\bf graph-metric} and
is denoted by $d_G$. The electric space will be denoted as $(\til{M}, d_G)$. If there are no thick blocks, the graph
metric between two points $x, y$ roughly measures the minimal number of split components one has to pass through
to go from $x$ to $y$.

The electric metric on $\EE (\til{M}, \KK \bigcup \TT_l)$ is quasi-isometric to the electric metric on $\EE (\til{M}, \KK^\prime)$, again by  the last assertion of Theorem \ref{wsplit}. 
The electric space will be denoted as $(\til{M}, d_G^1)$.

Note also that in $\EE (\til{M}, \KK \bigcup \TT_l)$,  elements of $\KK$ and $\TT$ are electrocuted separately. On the other hand each element in 
$\KK^\prime$ is a union of an element of $\KK$ and abutting elements of $\TT$. Thus we cannot say that $\EE (\til{M}, \KK \bigcup \TT_l)$ and 
$\EE (\til{M}, \KK^\prime)$ are isometric.}
\end{rmk}

\medskip

\begin{definition} Let $Y \subset \til{N}$ and $X=F(Y)$.  $X \subset \til{M}$ is said to 
be $\Delta$-graph quasiconvex if for any hyperbolic geodesic $\mu$ joining $a, b \in Y$,
$F(\mu )$ lies inside $N_\Delta (X, d_G) \subset \EE (\til{M}, \KK^\prime)$. \end{definition}

For $X (=F(Y))$ a split component in a manifold, define $CH(X) = F(CH(Y))$, where $CH(Y)$
  is the convex hull of $Y$ in $\til{N}$, provided the ends of $N$ have no cusps, i.e. $N=N^h$. Else define $CH(X)$ to be the image under
$F$ of $CH(Y)$ minus cusps.
Further, in order to ensure hyperbolicity of the universal cover, we partially electrocute the cusps of $M$ (cf. Lemma \ref{pel}).

Then $\Delta$-graph quasiconvexity of $X$
is equivalent to the condition that  $dia_G (CH(X))$  is bounded by $\Delta^{\prime} = \Delta^{\prime}(\Delta )$ as any split component 
has diameter one in $(\til{M}, d_G)$.

A split component $K (\subset E) \subset N$ is incompressible if the map $i_\ast : \pi_1(K) \rightarrow \pi_1(N)$  induced by the inclusion is injective.
Lemma \ref{hypqc-a}, Proposition \ref{gr-qc-free-a} and Proposition \ref{dGhyp-a} below were proved in \cite{mahan-split} for $M$ homotopy equivalent to a surface, where all split
components are automatically incompressible. However the proofs in  \cite{mahan-split} require only that the split
components be incompressible in $M$.

\begin{lemma}\cite[Lemma 4.16]{mahan-split} Let $E$  be a simply  degenerate end of a hyperbolic 3-manifold $N$ equipped with
a weak split geometry model $M$.
For $K$ an incompressible split component contained in $E$, let $\tilde{K}$ be a lift to $\til N$.  Then there exists $C_0
= C_0(K)$ such that  the convex hull of $\tilde K$ minus cusps lies in a
$C_0$-neighborhood of $\tilde K$ in $\til N$. \label{hypqc-a}
\end{lemma}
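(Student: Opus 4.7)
The plan is to show hyperbolic quasi-convexity of $\til K$ in $\til N = \mathbb{H}^3$ (away from cusps), from which the $C_0$-neighborhood statement for the convex hull follows automatically. I would proceed in three steps.

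First, I would reduce the problem to quasi-convexity of a subgroup. Since $K$ is a compact $3$-manifold and is incompressible in $N$, the inclusion induces an injection $H := \pi_1(K) \hookrightarrow G := \pi_1(N)$. The stabilizer in $G$ of the lift $\til K$ is precisely $H$, and since $K$ is compact, $H$ acts cocompactly on $\til K$. Thus $\til K$ is $H$-equivariantly quasi-isometric to $H$ with a word metric, and the desired bound on the convex hull is equivalent to $H$ being a geometrically finite subgroup of $G$ (without accidental parabolics), i.e.\ a quasi-convex subgroup in the relatively hyperbolic sense.

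Second, I would invoke tameness together with Canary's covering theorem. Since $H$ is finitely generated, the cover $N_H := \til N / H$ is topologically tame by Agol and Calegari--Gabai. Canary's covering theorem then asserts that either $N_H$ is geometrically finite, in which case we are done, or $N_H$ has a degenerate end $E_H$ whose projection to $N$ covers an end $E_0$ of $N$ with finite multiplicity.

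Third, I would rule out the degenerate-end alternative using the split geometry. Suppose for contradiction that $N_H$ has a degenerate end $E_H$ as above. Then there exists a sequence of simple closed curves on a subsurface of the compact core of $N_H$ whose geodesic representatives in $N_H$ exit $E_H$; projecting to $N$, the same curves have geodesic representatives that exit $E_0$. But because $K$ is a split component, its topological frontier in $M$ is built from flat pants carried by $\Sigma_i^s$ and $\Sigma_{i+1}^s$ together with annuli on $l$-thin Margulis tube boundaries, so any incompressible subsurface of $K$ is isotopic into one of the two bounding split surfaces. The uniform $L_1$-bi-Lipschitz bounds and $\epsilon_1$-separation from Theorem~\ref{wsplit}, together with the uniform lower bound on injectivity radius inside $M(l)$, force the geodesic realizations of such curves to remain in a bounded neighborhood of $\til K$, contradicting their exit from $E_0$.

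The main obstacle is the third step: excluding the covering-theorem alternative in which $N_H$ has a degenerate end. This demands combining the topological restrictions on $\partial K$ imposed by the split-block structure with the quantitative metric control furnished by Theorem~\ref{wsplit}, so as to prevent any curve carried by $\partial K$ from producing a geodesic sequence that escapes to infinity along a would-be degenerate end of $N_H$. Once this is in place, quasi-convexity of $H$ in $G$ yields the required $C_0 = C_0(K)$.
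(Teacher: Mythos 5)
Your overall strategy is the right one, and Steps 1 and 2 track the natural line of attack: reduce to showing that $H := \pi_1(K)$ is geometrically finite in $G := \pi_1(N)$, observe that the cover $N_H := \til{N}/H$ is topologically tame by Agol and Calegari--Gabai, and then invoke the Thurston--Canary covering theorem so that either $N_H$ is geometrically finite (done) or $N_H$ has a degenerate end finitely covering a degenerate end of $N$. (One should also note, as you implicitly do, that since $K$ is compact and covers itself under the deck action of $H$ on $\til K$, the lift $\til K$ is a cocompact $H$-set, so geometric finiteness of $H$ with a $K$-dependent constant is exactly equivalent to the $C_0(K)$-neighborhood statement; and that the constant being non-uniform in $K$ is permitted.)

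The genuine gap is in Step 3. You claim that the uniform $L_1$-bi-Lipschitz bounds, $\epsilon_1$-separation, and injectivity radius lower bound on $M(l)$ ``force the geodesic realizations of such curves to remain in a bounded neighborhood of $\til K$.'' This is not correct, and those estimates are not the right tool. The split-geometry bounds control the intrinsic geometry of the split surfaces $\Sigma_i^s$ and the blocks, but they say nothing about where in $N$ the closed geodesic realizations of curves carried by $\partial K$ must live: a short simple closed curve on $S_K$ can perfectly well have a geodesic representative lying arbitrarily deep in the end, and indeed that is exactly what must happen if $H$ were to fail to be geometrically finite. So the proposed metric contradiction does not close.

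The alternative that closes the gap is purely topological/group-theoretic. If $E_H$ is a degenerate end of $N_H$, the covering theorem gives a neighborhood $\Sigma_{E_H}\times[0,\infty)$ of $E_H$ that finitely covers a neighborhood $S_0\times[0,\infty)$ of a degenerate end $E_0$ of $N$; in particular $\pi_1(\Sigma_{E_H})$ embeds as a finite-index subgroup of the surface group $\pi_1(S_0)$. On the other hand $\pi_1(\Sigma_{E_H})\subset H = \pi_1(K) = \pi_1(S_K)$, and $S_K$ is a proper subsurface of $S$ with nonempty boundary, so $\pi_1(S_K)$ is free and of infinite index in $\pi_1(S)=\pi_1(S_0)$. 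No subgroup of an infinite-index subgroup can have finite index, and a free group cannot contain a closed surface group; either way one has the desired contradiction. (Equivalently, in the language of Proposition~\ref{incompressible}: a sequence of curves exiting $E_0$ must converge to the ending lamination of $E_0$, which is filling; but curves carried by $S_K$ are disjoint from the essential multicurve $\partial S_K$, so they cannot converge to a filling lamination.) Replacing your metric claim by this argument repairs Step~3, and the remainder of the plan then yields the stated $C_0 = C_0(K)$ bound.
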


\begin{prop}\cite[Proposition 4.23]{mahan-split} If $K$ is an incompressible split component, then $\til{K} $ is
  uniformly graph-quasiconvex in  $\til{M}$, i.e. there exists $\Delta^\prime$ such that $dia_G(CH(\til{K}))) \leq  \Delta^\prime$
for all incompressible split components $\til{K} $.
\label{gr-qc-free-a}
\end{prop}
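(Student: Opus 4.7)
My plan is to upgrade the hyperbolic bound from Lemma \ref{hypqc-a} to a graph-metric bound by exploiting the fact that $d_G$ collapses each split component to $d_G$-diameter at most $1$. Given $a,b \in CH(\tilde K)$, I would join them by a hyperbolic geodesic $\mu$ in $\tilde N$, apply Lemma \ref{hypqc-a} to conclude that $\mu \subset N_{C_0(K)}(\tilde K, d_{\tilde N})$, and transfer via the bi-Lipschitz model map $F$ to a curve $F(\mu) \subset \tilde M$ lying within hyperbolic distance $L C_0(K)$ of $F(\tilde K)$. The task is then to convert this hyperbolic neighborhood bound into a uniform $d_G$-bound.

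To do this, decompose $F(\mu)$ into maximal subsegments of two types: (i) subsegments contained in a single element of $\KK'$ (an extended split component), each contributing $d_G$-length at most $1$ by definition of the electrocution; and (ii) complementary subsegments, which must lie on a split surface $\tilde\Sigma_j^s$ or inside a thick block. Since thick blocks are uniformly bi-Lipschitz to $S \times [0,1]$ (with $S$ of bounded geometry), and since split surfaces $\Sigma_j^s$ are themselves bounded-geometry surfaces with uniformly bounded-diameter pants decompositions, each type-(ii) segment accrues $d_G$-length proportional to its hyperbolic length on the supporting surface/block.

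The main step is bounding the total count of such subsegments independently of $K$. Incompressibility is crucial here: because $\tilde K$ sits in a single split block bounded by two lifts of split surfaces $\tilde\Sigma_i^s, \tilde\Sigma_{i+1}^s$, and the stabilizer of $\tilde K$ in $\pi_1(N)$ acts cocompactly on $\tilde K$, the geometry of $\tilde K$ coarsely looks like (a lift of) $K$ itself. Any other extended split component met by $F(\mu)$ must be adjacent, in the graph-metric sense, to $\tilde K$ via a split surface or Margulis tube contact. Since transitions between adjacent elements of $\KK'$ are of uniformly bounded $d_G$-cost (from the $\epsilon_1$-separation and bounded-geometry structure of the split surfaces), and since the effective number of ``branchings'' of the $d_G$-neighborhood of $\tilde K$ into distinct extended split components is controlled by the bounded topological complexity of $S$ (cf. the last clause of Theorem \ref{wsplit}), the resulting diameter bound $\Delta'$ is uniform in $K$.

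The main obstacle is precisely this last uniformity: the hyperbolic constant $C_0(K)$ from Lemma \ref{hypqc-a} genuinely depends on $K$, so one cannot naively bound the $d_G$-diameter of $CH(\tilde K)$ by a multiple of $C_0(K)$. The resolution is that the $C_0(K)$-hyperbolic neighborhood of $\tilde K$ consists, modulo bounded-geometry pieces, of nearby extended split components, each of which is electrocuted to $d_G$-diameter $1$; incompressibility then ensures that these nearby split components are ``organized'' by the topology of the end rather than proliferating with $K$, yielding the uniform $\Delta'$.
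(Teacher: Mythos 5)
Your setup is reasonable up to a point: passing from $a,b\in CH(\tilde K)$ to a hyperbolic geodesic $\mu$, applying Lemma~\ref{hypqc-a}, transporting via the bi-Lipschitz map $F$, and then trying to read off a $d_G$-bound from the decomposition of $F(\mu)$ into pieces lying in elements of $\KK'$ and pieces lying between them. You also correctly put your finger on the genuine difficulty, namely that $C_0(K)$ in Lemma~\ref{hypqc-a} depends on $K$, so a $C_0(K)$-hyperbolic neighborhood does not translate to a uniform $d_G$-neighborhood by any naive length comparison.

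The problem is that your resolution of that difficulty is asserted rather than argued, and as stated it does not close the gap. The statement to be proved is quantitative: there must be a single $\Delta'$ bounding the number of split blocks (or, more precisely, the $d_G$-distance back to $\tilde K$) that a point of $CH(\tilde K)$ can reach, independent of $K$. If $C_0(K)$ grows with $K$, then $F(\mu)$ can in principle cross on the order of $C_0(K)/\epsilon_1$ split surfaces $\tilde\Sigma_j^s$ with $j$ ranging far from the index $i$ of the block containing $K$, and each such crossing costs at least one unit of $d_G$. Nothing in your argument rules this out. The clause you invoke from Theorem~\ref{wsplit} (each $l$-thin curve splits at most $n$ split level surfaces) controls how long a single Margulis tube persists vertically, not how far the convex hull of a fixed split component can propagate vertically, and incompressibility alone does not prevent the convex hull from spreading across many blocks; indeed the whole point of Lemma~\ref{hypqc-a} having a $K$-dependent constant is that the hyperbolic geometry of $\tilde K$ is not uniformly controlled. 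The phrase ``organized by the topology of the end rather than proliferating with $K$'' is exactly the conclusion one needs to prove, restated rather than established.

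What is actually required --- and what the cited Proposition 4.23 of \cite{mahan-split} supplies by a different mechanism --- is an argument that controls the $d_G$-spread of $CH(\tilde K)$ directly, using the bounded-geometry structure of the split surfaces bounding the split block and the hierarchy/tight-geodesic machinery behind the Minsky model, rather than trying to convert the non-uniform hyperbolic quasiconvexity constant into a graph-metric bound after the fact. The present paper does not reprove this; it cites it and only remarks that the argument in \cite{mahan-split} uses nothing beyond incompressibility of the split components. Your proposal would need a genuinely new quantitative input at the final step to be a proof.
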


\begin{prop}\cite[Corollary 4.30]{mahan-split} Suppose that all split components of $\til M$ are incompressible. Then $ (\til{M}, d_{G})$ and hence  $ (\til{M}, d_{G}^1)$   are Gromov-hyperbolic.
\label{dGhyp-a}
\end{prop}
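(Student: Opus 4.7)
The plan is to deduce hyperbolicity of $(\til M, d_G)$ from the hyperbolicity of $\til N$ (i.e.\ hyperbolic $3$-space, with a partial electrocution of cusps applied via Lemma~\ref{pel} so that we stay in the hyperbolic setting), by interposing an intermediate metric obtained by electrocuting convex hulls rather than split components themselves, and then invoking Farb's Lemma~\ref{farb1A}. Throughout, I identify $\til M$ with $\til N$ (minus cusp horoballs) via the bi-Lipschitz Minsky model map $F$; the graph metric $d_G$ transports unambiguously.

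First I would form the auxiliary metric $d_{CH}$ on $\til M$ by electrocuting the collection $\{CH(\til K)\}$, where $\til K$ ranges over lifts of (incompressible) split components and $CH(\til K)$ is taken in $\til N$ minus cusps. Since each $CH(\til K)$ is a genuine convex subset of (partially electrocuted) hyperbolic space, the family $\{CH(\til K)\}$ is uniformly (in fact $0$-)quasiconvex, so Lemma~\ref{farb1A}(3) gives that $(\til M, d_{CH})$ is Gromov hyperbolic.

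Second I would compare $d_G$ and $d_{CH}$ and show the identity map $(\til M, d_G) \to (\til M, d_{CH})$ is a quasi-isometry. The inequality $d_{CH} \le d_G$ is immediate since $\til K \subset CH(\til K)$, i.e.\ the $d_{CH}$-electrocution is a coarsening. For the reverse inequality, I use the uniform graph quasiconvexity from Proposition~\ref{gr-qc-free-a}: $dia_G(CH(\til K)) \le \Delta'$ for every incompressible split component. Consequently, any shortcut through $CH(\til K) \times \{1/2\}$ in the $d_{CH}$ model can be simulated in $(\til M, d_G)$ by traversing the single electrocuted point $\til K \times \{1/2\}$ at additive cost $\le 2\Delta'$ per crossing. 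Summing along an electric quasigeodesic gives $d_G \le (1+2\Delta')\, d_{CH} + (1+2\Delta')$, hence a quasi-isometry. Since Gromov hyperbolicity is a quasi-isometry invariant, $(\til M, d_G)$ is hyperbolic.

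Finally, hyperbolicity of $(\til M, d_G^1)$ follows from the hyperbolicity of $(\til M, d_G)$ together with the observation (from the last clause of Theorem~\ref{wsplit}, recorded in the remark preceding the proposition) that the inclusion-induced identity map provides a quasi-isometry $\EE(\til M, \KK \cup \TT_l) \to \EE(\til M, \KK')$, because only uniformly boundedly many surfaces are split by a given $l$-thin tube, so each $T \in \TT_l$ sits in a $d_G$-bounded neighborhood of an extended split component. The one step I expect to be genuinely delicate is the second, specifically confirming that the bound on $dia_G(CH(\til K))$ from Proposition~\ref{gr-qc-free-a} really is uniform over all lifts of all incompressible split components (Lemma~\ref{hypqc-a} only gives a $K$-dependent hyperbolic quasiconvexity constant $C_0$), so extracting a uniform \emph{graph} bound is exactly the content one must import from Proposition~\ref{gr-qc-free-a}; everything else is formal.
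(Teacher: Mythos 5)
Your proposal is correct and takes essentially the same route as the source (\cite[Corollary 4.30]{mahan-split}), which the present paper merely cites: form an auxiliary metric $d_{CH}$ by electrocuting the convex hulls $CH(\til K)$, invoke Farb's relative hyperbolicity criterion (Lemma~\ref{farb1A}(3)) since convex sets are uniformly ($0$-)quasiconvex after partial electrocution of cusps, then use the uniform graph-diameter bound $dia_G(CH(\til K)) \le \Delta'$ from Proposition~\ref{gr-qc-free-a} to show the identity map gives a quasi-isometry between $(\til M, d_G)$ and $(\til M, d_{CH})$, and finally transfer to $d_G^1$ via the quasi-isometry between $\EE(\til M, \KK \cup \TT_l)$ and $\EE(\til M, \KK')$ recorded in the remark following Theorem~\ref{wsplit}. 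You have also correctly located the one genuinely nontrivial input — the uniformity of $\Delta'$ over all split components — in Proposition~\ref{gr-qc-free-a} rather than in the $K$-dependent Lemma~\ref{hypqc-a}.
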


In fact electro-ambient quasigeodesics in $ (\til{M}, d_{G})$ and   $ (\til{M}, d_{G}^1)$ have the following relation.

\begin{lemma}
 Let $M$ be  a model of split geometry such that all split components are incompressible. Let  $ (\til{M}, d_{G}) (= \EE(\til{M},\KK^\prime ))$ and   $ (\til{M}, d_{G}^1)
 (= \EE(\til{M},\KK \bigcup \TT_l ))$ be as above.
Given $o \in \til{M}$ and $C_0 > 0 $, 
there exists a function $\Theta : \mathbb{N} \rightarrow \mathbb{N}$ satisfying
$\Theta (n) \rightarrow \infty$ as $n \rightarrow \infty$ such that the following holds.\\
For any $a, b \in \til{M}$, let $\beta_{ea}^h $  be an electro-ambient
$C_0-$quasigeodesic without backtracking in $ (\til{M}, d_{G})$ joining $a, b$. Let
$\beta_{ea} = \beta_{ea}^h \setminus \partial \til{M}$ be the part of $\beta_{ea}^h$ lying away from
the (bi-Lipschitz) horospherical boundary of $ \til{M}$. Again, let $\beta_{ea1}^h $  be  an electro-ambient
$C_0-$quasigeodesic without backtracking in $ (\til{M}, d_{G}^1)$ joining $a, b$. Let
$\beta_{ea1} = \beta_{ea1}^h \setminus \partial \til{M}$ be the part of $\beta_{ea1}^h$ lying away from
the (bi-Lipschitz) horospherical boundary of $ \til{M}$, where we refer to the bi-Lipschitz image under $F$ of the
horospherical boundary of $ \til{N}$ as the (bi-Lipschitz) horospherical boundary of $ \til{M}$. 

Then $d_M(\beta_{ea}, o) \geq n $ implies that  $d_M(\beta_{ea1}, o) \geq \Theta (n)$. Conversely, 
$d_M(\beta_{ea1}, o) \geq n $ implies that  $d_M(\beta_{ea}, o) \geq \Theta (n)$.
\label{contlemma1}
\end{lemma}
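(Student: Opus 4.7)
The plan is to reduce the lemma to a two-sided tracking statement: both electro-ambient quasigeodesics $\beta_{ea}^h$ and $\beta_{ea1}^h$ should lie within uniformly bounded Hausdorff distance of a single reference path, namely the hyperbolic geodesic $\alpha$ joining $a$ and $b$ in $(\til M, d_{pel})$, where $d_{pel}$ is the partially electrocuted (and hence Gromov-hyperbolic, by Lemma \ref{pel}) ambient metric on $\til M$. Once this is in place, after restricting to the complement of the horospherical boundary the paths $\beta_{ea}$ and $\beta_{ea1}$ will lie within a uniform $d_M$-Hausdorff distance $D$ of each other, and $\Theta(n) := \max(0, n - D)$ will work for both implications via the triangle inequality.

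First I would dispense with the bookkeeping between the two electric structures. By the last clause of Theorem \ref{wsplit}, each $l$-thin Margulis tube is abutted by a uniformly bounded collection of split components, so every extended split component $\til K^{ext} \in \KK^\prime$ is the union of a single split component in $\KK$ with a uniformly bounded number of Margulis tubes in $\TT_l$. Each $\til K^{ext}$ therefore has uniformly bounded $d_G^1$-diameter, and the identity map $(\til M, d_G) \to (\til M, d_G^1)$ is a quasi-isometry. Under the standing hypothesis that all split components are incompressible, Lemma \ref{hypqc-a} furthermore gives uniform hyperbolic quasiconvexity (in $d_{pel}$) of both $\KK^\prime$ and $\KK \cup \TT_l$.

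Next I would establish the two-sided tracking. The easy direction, that $\alpha$ lies in a bounded $d_{pel}$-neighborhood of each of $\beta_{ea}^h$ and $\beta_{ea1}^h$, is a direct application of Lemma \ref{ea-strong}. The harder direction, that $\beta_{ea}^h$ and $\beta_{ea1}^h$ each lie in a bounded $d_{pel}$-neighborhood of $\alpha$, would follow from Lemma \ref{ea-genl}: under uniform mutual coboundedness of the electrocuted family, electro-ambient quasigeodesics are $(P,P)$-quasigeodesics in the ambient hyperbolic metric, whereupon Morse stability of hyperbolic quasigeodesics produces the desired neighborhood. The required mutual coboundedness of $\KK^\prime$ (resp.\ $\KK \cup \TT_l$) in $(\til M, d_{pel})$ itself follows from the uniform $\epsilon_1$-separation of split surfaces (Theorem \ref{wsplit}) together with uniform hyperbolic quasiconvexity, via the standard nearest-point projection estimate.

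The main obstacle is the mutual coboundedness step above. In the surface-group setting of \cite{mahan-split} this was essentially automatic because split components were a priori incompressible; here, in the presence of compressible boundary, one must first invoke that split components sufficiently deep in the end are incompressible before the nearest-point projection estimate can be pushed through. After that, combining both tracking directions gives $\beta_{ea}^h$ and $\beta_{ea1}^h$ within uniform $d_{pel}$-Hausdorff distance of each other, and the standard comparison between $d_M$ and $d_{pel}$ away from the horospherical boundary transfers this to the required $d_M$ bound, yielding the lemma.
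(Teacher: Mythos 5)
There is a genuine gap, and it is the central one for this part of the theory. Your proof rests on the claim that $\KK^\prime$ and $\KK \cup \TT_l$ are \emph{uniformly} hyperbolically quasiconvex in $(\til M, d_{pel})$, which you then feed into Lemma \ref{ea-genl} (via mutual coboundedness) to conclude that $\beta_{ea}^h$ and $\beta_{ea1}^h$ are both ambient $(P,P)$-quasigeodesics, hence within a uniform Hausdorff distance $D$, hence $\Theta(n) = \max(0, n-D)$ works. But Lemma \ref{hypqc-a} does \emph{not} give uniform quasiconvexity: the constant there is $C_0 = C_0(K)$, depending on the split component $K$, and the paper stresses repeatedly that there is ``little control on the geometry of the split components'' (see also Definition \ref{gqc}: ``quasiconvex (not necessarily uniformly) in the hyperbolic metric''). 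The whole point of introducing the graph metric $d_G$ and the notion of \emph{graph} quasiconvexity is precisely that split components are uniformly quasiconvex in $d_G$ but not in the ambient metric. Without uniform hyperbolic quasiconvexity, mutual coboundedness fails, Lemma \ref{ea-genl} does not apply, and there is no uniform $D$ controlling the Hausdorff distance between $\beta_{ea}$ and $\beta_{ea1}$. A function of the form $n - D$ with fixed $D$ cannot be the $\Theta$ of the lemma.

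The correct route (and the one the paper takes) accepts that the tracking constant degrades as one moves out the end, and instead works block-by-block: for each block $B_i$ there is a constant $C(i)$, depending on $i$ and governed by the geometry of the (extended) split components in $B_i$, so that $\beta_{ea1} \cap \til{B_i}$ lies in a $C(i)$-neighborhood of $\beta_{ea} \cap \til{B_i}$ in $d_M$. The saving grace is the uniform $\epsilon_1$-separatedness of split surfaces from Theorem \ref{wsplit}: any point of $\beta_{ea1} \cap \til{B_i}$ is automatically at $d_M$-distance at least $i\epsilon_1$ from $o$. Combining gives $d_M(\beta_{ea1} \cap \til{B_i}, o) \geq \max(n - C(i), i\epsilon_1)$, and setting $D(i) = \max_{1\le j\le i} C(j)$ one obtains $d_M(\beta_{ea1}, o) \geq \max(n - D(i), i\epsilon_1)$ for every $i$. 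Since $i\epsilon_1 \to \infty$ even though $D(i)$ may blow up, one can extract a proper function $\Theta$, which is all the lemma asks for. So the conclusion is weaker than uniform Hausdorff closeness, and that weakening is essential: your proof as written proves something false.
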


\begin{proof} Let $K^\prime$ be an extended split component in $\KK^\prime$ and $\til{K^\prime}$ denote its universal cover.  Let
$\til{K^\prime} = \til{K} \bigcup \til{T^i}$ where $\til{T^i}$ are the universal covers  of $l$-thin Margulis tubes abutting the split component 
$\til{K}$ in $\til{K^\prime}$.   Suppose $K$ is contained in $ B_i$, the $i$-th  block in an end $E$.

Then  $\til{K^\prime}$ is hyperbolic and is contained in a $C(=C(K^\prime))$-neighborhood of $\til K$.  The argument is now a reprise of similar arguments
 in Section 6  (e.g. Lemma 6.8 and 6.10) of \cite{mahan-split}:\\
For all $i$, there exists $C(i)$, such that $\beta_{ea1} \cap \til{B_i}$ lies in a $C(i)$-neighborhood of 
 $\beta_{ea} \cap \til{B_i}$ in $\til{M}$. Suppose
$d_M(\beta_{ea}, o) \geq n $.  Hence, by uniform $k_0$-separatedness of split surfaces (Theorem \ref{wsplit}),
$d_M(\beta_{ea1} \cap \til{B_i}, o) \geq max(n-C(i), ik_0)$.

Let $D(i) =max_{1\leq j \leq i} C(i)$. Then $d_M(\beta_{ea1}, o) \geq max(n-D(i), ik_0)$ for all $i$. One direction of the Lemma follows.

The converse direction is similar.
\end{proof}

We summarize the conclusions of the above propositions below.

\begin{defn} \label{gqc} A model manifold of weak split geometry is said to be of {\bf split geometry} if \\
\begin{enumerate}
\item  Each split component $\til{K}$ is 
quasiconvex (not necessarily uniformly) in the hyperbolic metric on $\til{N}$. \\
\item Equip $\til{M}$ with the {\it graph-metric} $d_G$ obtained by
electrocuting  (extended) split components $\til{K}$. Then the convex hull
$CH( \til{K})$ of any split component $\til{K}$ has uniformly bounded
diameter in the metric $d_G$. 
\end{enumerate}
\end{defn}

Hence by Lemma \ref{hypqc-a} and Proposition \ref{gr-qc-free-a} we have the following (where we refer the reader to
the Appendix, Section \ref{app}, for a sketch of a proof 
of the existence of a
Minsky model 
for a  general finitely generated Kleinian group without parabolics).

\begin{theorem}
Any  degenerate end $E$ of a hyperbolic 3-manifold $M$
 admitting a Minsky model also
has a model of split geometry. In particular, if $j: E\to M$ denotes the inclusion 
and if no element of $j_\ast (\pi_1(E))$ is a parabolic, then $E$ admits a model of split geometry. \label{minsky-split}
\end{theorem}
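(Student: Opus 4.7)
The plan is to reduce the theorem to the two hypotheses of Definition \ref{gqc} and then to invoke, in turn, Lemma \ref{hypqc-a} and Proposition \ref{gr-qc-free-a}. First, assuming the Minsky model exists, Theorem \ref{wsplit} supplies a sequence $\{\Sigma_i\}$ of $L_1$-bi-Lipschitz, $\epsilon_1$-separated split surfaces exiting each degenerate end $E$, with each consecutive pair $(\Sigma_i,\Sigma_{i+1})$ either a thick pair (bounding a block uniformly bi-Lipschitz to $S\times[0,1]$) or a thin pair split by a collection of $l$-thin Margulis tubes. This equips $M$ with a weak split geometry structure and defines the split components $\KK$ and their extensions $\KK'$. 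For the ``in particular'' clause, when $M$ has no parabolics, the Minsky model itself is produced by the construction sketched in Appendix \ref{app} (cf.\ Remark \ref{minapp}); for the general case one uses \cite{minsky-elc1,minsky-elc2,minsky-elc3}.

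To upgrade weak split geometry to split geometry we must verify (i) each lift $\til K$ of a split component is quasiconvex in $\til N$, and (ii) the convex hulls $CH(\til K)$ have uniformly bounded diameter in $d_G$. Both conclusions are delivered, respectively, by Lemma \ref{hypqc-a} and Proposition \ref{gr-qc-free-a}, \emph{provided the split components $K$ in question are incompressible}, i.e.\ $\pi_1(K)\to\pi_1(N)$ is injective. The uniform bound in (ii) comes from Proposition \ref{gr-qc-free-a}, which does not depend on the individual component. Hence the whole argument reduces to establishing incompressibility of every split component lying sufficiently deep inside each end $E$ of $M$; this is the content of the forward-referenced Proposition \ref{incompressible}.

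The main obstacle is precisely Proposition \ref{incompressible}. In the setting of \cite{mahan-split}, where $M$ is homotopy equivalent to a surface, incompressibility of split components was automatic since $\pi_1$ of every embedded subsurface injects. In the present (handlebody / compressible-core) setting this fails a priori, so one must argue geometrically. The plan is as follows: fix the augmented compact core $M_{gf}$ of $M$. Any compressing disk for a split component $K\subset B_i$ produces an essential simple closed curve on $\Sigma_i^s\cup(\text{thin tube annuli})$ that bounds a disk in $M$. Using that the $\{\Sigma_i\}$ are $\epsilon_1$-separated and that the blocks $B_i$ exit $E$, a standard surgery / innermost-disk argument shows that such a disk can be homotoped to miss $\Sigma_i^s$ for all but finitely many $i$. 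Combined with the fact that the end $E$ has an ending lamination which fills $S$ (so no simple closed curve on $S$ can be represented by arbitrarily short geodesics exiting $E$ indefinitely after being compressed in $M$), one concludes that only finitely many of the split surfaces $\Sigma_i$ can admit a compressing disk meeting them non-trivially. Therefore, past a sufficiently deep level, every split component is incompressible, and the two conditions of Definition \ref{gqc} follow from the lemmas cited above. The technical heart is to make the surgery argument quantitative against the separation $\epsilon_1$ and the exiting behaviour guaranteed by Theorem \ref{wsplit}.
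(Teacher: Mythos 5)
Your reduction to Theorem~\ref{wsplit} (weak split geometry) together with Lemma~\ref{hypqc-a} and Proposition~\ref{gr-qc-free-a} (quasiconvexity statements requiring incompressibility) is indeed the skeleton of what the paper does, and you are right that both cited lemmas carry an incompressibility hypothesis. However, the further step you take --- reducing Theorem~\ref{minsky-split} to the forward-referenced Proposition~\ref{incompressible} --- is \emph{not} what the paper intends, and it is the wrong reduction. Theorem~\ref{minsky-split} concerns the end $E$ equipped with its Minsky model: such a model is, by construction, the bi-Lipschitz model of an end of a simply degenerate \emph{surface group} manifold (equivalently, of the cover of $M$ corresponding to $j_\ast\pi_1(E)$). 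In that surface-group cover every split component $K$ is a subsurface of $S$ times an interval, so $\pi_1(K)$ injects automatically; Lemma~\ref{hypqc-a} and Proposition~\ref{gr-qc-free-a} then apply directly with no further work, and that is the whole of the paper's proof. Proposition~\ref{incompressible} addresses a genuinely later question --- incompressibility of split components in the \emph{ambient} compressible manifold $M$, which is what one needs to define the graph metric on all of $\widetilde M$ in Section~\ref{mainsec} --- and it is stated with a split geometry structure already in hand, so invoking it inside the proof of Theorem~\ref{minsky-split} risks circularity unless one carefully notes that its proof only uses the weak structure.

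There is also a gap in your sketch of Proposition~\ref{incompressible} itself. You assert that a compressing disk for a split component $K\subset B_i$ can, by innermost-disk surgery and $\epsilon_1$-separatedness, be homotoped off $\Sigma_j^s$ for all but finitely many $j$. This is unjustified: a compressing disk for $K$ has boundary on $\Sigma_i^s$ deep in the end but is capped off in the core $H$, so it must traverse $\Sigma_1^s,\dots,\Sigma_{i-1}^s$, and surgery will not remove those intersections. The paper's actual argument avoids this entirely: if split components $K_i$ were compressible for all $i$, the Loop Theorem would yield meridians $\sigma_i\in\DD(S)$ disjoint from the splitting-tube core curves $\alpha_i$; since the geodesic realizations of $\alpha_i$ exit $E$, the $\alpha_i$ converge in $\PML(S)$ to the ending lamination $\lambda$, forcing $\lambda\in cl(\DD(S))$ and contradicting Theorem~\ref{elinmd} (which places $\lambda$ in the Masur domain). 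Your appeal to ``the ending lamination fills $S$'' is in the right spirit, but the precise mechanism --- convergence of the tube core curves to $\lambda$ combined with disjointness from a meridian --- is what makes the contradiction go through, and the quantitative surgery you propose is neither needed nor, as stated, correct.
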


\section{Free Groups and Finitely Generated Kleinian Groups}\label{mainsec}

Let $G$ be a  free geometrically infinite Kleinian group.
Agol \cite{agol-tameness}, and independently, Gabai and Calegari \cite{gab-cal} have shown that 
$N = {\Hyp^3}/G$ is topologically tame, and hence, by work of Canary 
\cite{canary}, geometrically tame. Then any manifold $M$ bi-Lipschitz to $N$ is homeomorphic to the
interior of a handlebody with boundary $S$. 

More generally, let $G$ be a finitely
generated Kleinian group  and $G_f$ be  a geometrically finite
Kleinian group, abstractly isomorphic to $G$ via a strictly type-preserving
isomorphism. Let $H$ denote the convex core of ${\Hyp}^3 / G_f$ and let $M$ be a model manifold bi-Lipschitz homeomorphic to 
$N = {\Hyp}^3 / G$. Then there is a natural identification $i: H \rightarrow
M$ of $H$ with the augmented Scott core (i.e. Scott core plus parabolics) of $M$. Let $\til{i}$ indicate
the lift of $i$ to the universal cover. For most of the discussion below, it might be helpful at  first reading to have in mind a free
geometrically infinite Kleinian group without parabolics. We fix this notation for $H, M, S$ throughout this section.

\noindent {\bf Standing Assumption:} For the purposes of this Section, we assume that each degenerate end $E$ of $M$ admits
a Minsky model. As mentioned in Remark \ref{minapp} this is expected to be satisfied always and a proof in the special case that
$M$ has no parabolics is sketched in Section \ref{app}.

\subsection{The Masur Domain}

\begin{definition} Let $E$ be an end of a hyperbolic manifold such that $E$ is homeomorphic to $ S \times [0, \infty )$ for $S$ a finite area hyperbolic surface.
A map $h : E \rightarrow S \times [0, \infty )$ is said to
  be type-preserving, if all and only
 the  cusps of $E$ are mapped to cusps of $S \times [0, \infty )$. 
\label{def-typepres}
\end{definition}

\begin{theorem} \cite{agol-tameness, gab-cal, bowditch-endinv, minsky-elc3}
Let $G$ be a finitely generated Kleinian group and $M = {\Hyp^3}/G$. Let
$H$ denote an augmented Scott core of $M$. Let $E_1$ be a
geometrically infinite end of
$M \setminus H$. Then $E_1$ is homeomorphic (via a type-preserving
homeomorphism) to a topological product  $S \times [0, \infty )$ for a
  hyperbolic
surface $S$ of finite area. Further, there exists a neighborhood $E$
of the end corresponding to $E_1$ such that $E$ is bi-Lipschitz
homeomorphic to a Minsky model for $S \times [0, \infty )$ and 
hence to a model of split geometry. 
\label{free-model}
\end{theorem}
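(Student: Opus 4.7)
The statement is essentially an assembly of deep results already in the literature, so my strategy is to identify each ingredient and the modest topological bookkeeping that joins them, rather than to prove anything new from scratch.

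First I would invoke tameness: by Agol \cite{agol-tameness} and Calegari--Gabai \cite{gab-cal}, the manifold $M = \mathbb{H}^3/G$ is topologically tame, so $M$ is homeomorphic to the interior of a compact $3$-manifold $\bar{M}$. Choose an augmented Scott core $H$, i.e. a relative compact core containing standard neighborhoods of all cusps. Then each component of $M\setminus H$ is a neighborhood of an end, and each such component is homeomorphic to $\Sigma \times [0,\infty)$ with $\Sigma$ the corresponding boundary subsurface of $\partial \bar{M}$. Writing $\Sigma = S \setminus P_\Sigma$, where $P_\Sigma$ is the collection of cusp-annuli on $\partial H$ and $S$ is the corresponding (cusp-completed) finite-area hyperbolic surface, we obtain the desired product structure on the geometrically infinite end $E_1$.

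Next I would verify type-preservation. The augmented Scott core $H$ contains disjoint standard annular collars of each rank-one cusp; each such cusp region in $M$ extends into $E_1$ as the product of a horocyclic annulus with $[0,\infty)$. A straightforward isotopy of the product homeomorphism of the previous paragraph can be arranged to carry these cusp annuli exactly onto the cusp annuli of $S\times[0,\infty)$, producing a type-preserving homeomorphism in the sense of Definition \ref{def-typepres}.

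With the topological model in place, I would then apply the bi-Lipschitz Minsky model theorem on a neighborhood $E$ of the end. When the boundary subsurface $S$ is incompressible in $M$, the Minsky model and its bi-Lipschitz property follow from Minsky \cite{minsky-elc1} together with Brock--Canary--Minsky \cite{minsky-elc2}. The general (compressible) case is announced in Minsky \cite{minsky-elc3} and Bowditch \cite{bowditch-endinv}; since these are not fully in print, in the parabolic-free setting I would instead appeal to the reduction sketched in Appendix \ref{app}, which derives the compressible case from the incompressible one via the Brock--Bromberg--Souto grafting argument. Finally, having a bi-Lipschitz Minsky model on $E$, I close by invoking Theorem \ref{minsky-split} to upgrade it automatically to a split geometry model. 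The genuine obstacle, and essentially the only one, is the bi-Lipschitz Minsky model in the compressible case; the tameness, product, and type-preservation steps are standard, and the passage from Minsky to split geometry is already packaged in Theorem \ref{minsky-split}.
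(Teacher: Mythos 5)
Your proposal is correct and tracks the paper's own treatment closely: the paper presents Theorem \ref{free-model} as an assembly of cited results (tameness from \cite{agol-tameness, gab-cal}, the bi-Lipschitz Minsky model from \cite{minsky-elc1, minsky-elc2} in the incompressible case and \cite{minsky-elc3, bowditch-endinv} or the Appendix in the compressible case), and explicitly states that the passage to split geometry is just Theorem \ref{minsky-split}. Your identification of the compressible-boundary Minsky model as the only genuine obstacle, and your deferral to Appendix \ref{app} for the parabolic-free reduction, matches the paper's footnoted caveat and the Standing Assumption governing the section.
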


The last part of the last statement follows from Theorem \ref{minsky-split}. 

Some ambiguity remains in the statement of Theorem \ref{free-model}
above. This lies in the choice of the ending lamination for $E$ used
to build the Minsky model. Since $i : S \subset E$ is type-preserving,
no parabolic element of $S$ bounds a compressing disk. 
Let $\ML(S)$ be the space of measured laminations on $S$ and let $\PML (S)$ denote the space of projectivized 
measured laminations. Let $\DD(S)$ be the
subset of $\PML(S)$ consisting of weighted unions of disjoint
meridians (boundaries of compression disks lying on $S$). Let
$cl(\DD(S) )$ denote the closure of $\DD(S)$. Define the {\bf Masur domain}
of $S$ by \\
$\PP\MM \DD (S) = \{ \lambda \in \PML(S) : i(\lambda, \mu ) > 0\}$ for all
  $\mu \in cl(\DD (S) )$, provided $S$ has at least two disjoint isotopy classes of compressing disks.

 Else, we define \\
$\PP\MM \DD (S) = \{ \lambda \in \PML(S) : i(\lambda, \mu ) > 0$ 
 for any
  $\mu $ that is disjoint from a compressing disk $\}$.

Here $i(\lambda, \mu )$ denotes the intersection number of $\lambda, \mu$. Let $\MM \DD (S) (\subset \ML (S))$ be 
the set of measured geodesic laminations whose projective
class lies in  $\PP\MM \DD (S)$.

Now, let $M = H \cup_i E_i$, where $H$ is an augmented Scott core.
We consider the ends $E_j$ which have compressible boundary, i.e. $\partial E_j (= E_j \cap H)$
 is compressible in $H$. Let $E$ be such an $E_j$
 and $S = H \cap E$ be the corresponding
 boundary component of $H$. Then any compressible simple closed curve on $\partial H$
 lies on such an $S$. We fix an $S$ for now and proceed. Let $Mod_0(S)$
denote the subgroup of the mapping class group of $S$ generated by
Dehn twists along essential simple closed curves that bound embedded disks in $H$. 
 $Mod_0(S)$ acts on $\PP\MM\LL(S)$. It was shown
by Otal \cite{otal-thesis} (see also McCarthy and Papadopoulos
\cite{mcpapa}) that under this action, $Mod_0 (S)$ acts properly discontinuously on
$\PP\MM\DD(S) (\subset \PP\MM\LL(S))$ with limit set $ cl(\DD(S)) (\subset \PP\MM\LL(S))$. 

\begin{rmk} Classically \cite{luft,suzuki,mcmil}, $Mod_0(S)$ is defined as the subgroup of the mapping class group
$Mod(S)$ of $S$ which extend to the trivial outer automorphism of $\pi_1(H)$. Luft \cite{luft}, Suzuki \cite{suzuki}
and McCullough-Miller \cite{mcmil}
prove that this group is exactly the subgroup of the mapping class group of $S$ generated by
Dehn twists along essential simple closed curves that bound embedded disks in $H$. 
\end{rmk}

As mentioned in the introduction to the paper, for any degenerate end $E$ with boundary surface $S$,
 there exists a sequence
of simple closed curves $\{ \sigma_n \}$ on $S$, whose geodesic realizations exit the end $E$. 
The limit of these curves in $\PML(S)$ defines an ending lamination for $E$. However, since $S$ is compressible (in $H$), 
we may obtain a different sequence of simple closed curves $\{ \sigma_n' \}$ on $S$ by acting on $\sigma_n$ by different elements of $Mod_0(S)$.
Note that $\sigma_n'$ and $\sigma_n$ are homotopic in $H \cup E$ and hence have the same geodesic realization in $E$. Otal \cite{otal-thesis} (see also \cite{ks} for the case with parabolics) shows   
that any two essential simple closed curves on $S$  in the Masur domain 
that are freely homotopic in $M$ lie in the same $Mod_0 (S)-$orbit.
We refer the reader to   \cite[Proposition 3.3]{canary} for a published proof .
Otal \cite{otal-thesis} (see also \cite{ks}) further shows that the subset of weighted multicurves in
$\PP\MM \DD (S)/Mod_0(S)$ injects homeomorphically
into the space of currents on $M$. It follows that the ending lamination for $E$ is well-defined up to the action of
$Mod_0(S)$:

\begin{theorem} 
For any finitely generated Kleinian group, the ending lamination
$\lambda$ 
facing a surface $S$ with a  compressing disk lies in the Masur Domain and is a well-defined element of $\PP\MM \DD (S)/Mod_0(S)$.
\label{elinmd}
\end{theorem}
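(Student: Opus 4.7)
My plan is to split the statement into two claims and handle them in order: (a) every $\PML(S)$-limit $\lambda$ of a sequence $\{\sigma_n\}$ of simple closed curves on $S$ whose geodesic realizations in $N$ exit the end $E$ already lies in the open Masur domain $\PP\MM\DD(S)$; and (b) the class of $\lambda$ in $\PP\MM\DD(S)/Mod_0(S)$ depends only on $E$, not on the chosen exiting sequence. The existence of exiting sequences $\{\sigma_n\}$ is guaranteed by tameness plus Bonahon/Canary/Thurston, so I may freely pass to $\PML(S)$ limits after normalization.

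For (a), I would argue by contradiction. Suppose $\sigma_n\to\lambda$ with geodesic realizations $\sigma_n^\ast$ exiting $E$, but $\lambda\notin\PP\MM\DD(S)$. Then there exists $\mu\in cl(\DD(S))$ with $i(\lambda,\mu)=0$, and $\mu$ is approximated by weighted meridional multicurves $\mu_k$, each bounding compressing disks in $H$. Choosing a diagonal subsequence yields $i(\sigma_n,\mu_{k(n)})\to 0$ (after suitable normalization), which via standard disk-compression and Haken surgery inside $H$ allows us to replace $\sigma_n$ by a freely homotopic (in $H\cup E$) curve $\tau_n$ carried by a proper subsurface of $S$ that can itself be isotoped into $H$. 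A curve homotopic into $H$ has its geodesic realization in $N$ trapped in a bounded neighborhood of the convex core of $H$, contradicting the assumption that $\sigma_n^\ast=\tau_n^\ast$ exits $E$. Making this surgery argument precise, while invoking Otal's unknottedness theorem to control how $\sigma_n^\ast$ sits in $E$, is where the main technical difficulty lies; this is exactly the step where the compressible-boundary case parts ways with the surface-group setting.

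For (b), let $\{\sigma_n\}$ and $\{\sigma_n'\}$ be two exiting sequences with $\sigma_n\to\lambda$ and $\sigma_n'\to\lambda'$ in $\PP\MM\DD(S)$ (by (a)). Their geodesic realizations in $M$, renormalized as geodesic currents, both converge to the single geodesic current supported on the geodesic lamination realized by the end $E$, since both sequences eventually lie in every prescribed neighborhood of $E$ and thus wrap around the same end-invariant structure. By the cited result of Otal, the natural map from $\PP\MM\DD(S)/Mod_0(S)$ to the space of currents on $M$ is an injective homeomorphism on the dense subset of weighted multicurves; combined with proper discontinuity of $Mod_0(S)$ on $\PP\MM\DD(S)$, this injectivity passes to the full quotient. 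Hence $[\lambda]=[\lambda']$ in $\PP\MM\DD(S)/Mod_0(S)$, which is the required well-definedness. The main obstacle throughout is (a); once it is in place, (b) follows cleanly from Otal's injection into currents together with a density argument.
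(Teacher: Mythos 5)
The paper does not give an independent proof of this theorem: the paragraph preceding its statement reduces both assertions (membership in the Masur domain, and well-definedness modulo $Mod_0(S)$) to cited results of Otal's thesis and of Kleineidam--Souto, with \cite[Proposition~3.3]{canary} as a published reference. Your part~(b) essentially retraces the paper's citation route (Otal's homeomorphic injection of the multicurve set in $\PP\MM\DD(S)/Mod_0(S)$ into currents on $M$), apart from some looseness about what "convergence of renormalized currents" means when the geodesics are escaping every compact set. Your part~(a), by contrast, attempts an independent, self-contained argument that departs from the paper --- and that is where the gap lies.

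The surgery step in~(a) does not work as written. From $i(\lambda,\mu)=0$ for some $\mu\in cl(\DD(S))$ and a diagonal subsequence with $i(\sigma_n,\mu_{k(n)})\to 0$ after rescaling, you only control a normalized intersection number; the actual geometric intersection $i(\sigma_n,\mu_{k(n)})$ need not vanish, so "standard disk-compression and Haken surgery" does not erase the intersections while producing a simple closed curve freely homotopic to $\sigma_n$ in $M$. More fundamentally, even in the idealized case where $\sigma_n$ is literally disjoint from a meridian $m=\partial D$, the conclusion fails: the complementary subsurface $S\setminus m$ is not $\pi_1$-trivial in $H$ whenever $H$ has genus at least two, so being carried by it does not make a curve isotopic into the interior of $H$ (for that you would need the curve to bound a disk, i.e.\ to itself be a meridian), and it places no bound on its translation length in $\pi_1(M)$. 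Consequently its geodesic realization is \emph{not} forced into a bounded neighborhood of the convex core of $H$; one can produce sequences of curves disjoint from a fixed meridian with realizations leaving every compact set. The contradiction you aim for therefore does not materialize. The actual argument that $\lambda\in\PP\MM\DD(S)$ (Otal, Kleineidam--Souto, Canary) is a genuinely global one: it uses the covering theorem together with the structure of realized laminations and pleated surfaces, or equivalently a careful analysis of how the $Mod_0(S)$-action lets one shorten realizations if $\lambda$ had zero intersection with a limit of meridian systems. Your instinct to isolate~(a) as the technical heart is right, and you correctly flag it as the hard step; but the local cut-and-paste reduction you propose to carry it out does not succeed.
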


For our purposes we shall mostly be satisfied with the fact that $E$
is bi-Lipschitz homeomorphic to {\em some} Minsky model, and hence, by Theorem \ref{free-model} to a model
of split geometry.

\subsection{Incompressibility of Split Components}\label{incomp}
Recall that we are working in the setup where $M$ is a hyperbolic 3-manifold and $H$ its Scott core (augmented Scott core, when $M$ has parabolics). Let $S$ be a boundary component of $H$ that is compressible (rel. cusps when $M$, and hence $H$, has parabolics).
 Let $E$ be the end with $S$ as its boundary. 
We would like to show that sufficiently deep within  $E$, all split
components are incompressible {\em in $M$}. 
Recall 
that splitting tubes correspond to {\em thin Margulis tubes} in the
split geometry model built from the Minsky model.

\begin{prop}
Let $M, H, E$ be as above. Equip $E$ with a split geometry structure. Then there exists (a "sub-end") $E_2 \subset E$ such that \\
\begin{enumerate}
\item $E_2$ is homeomorphic to $S \times [0, \infty )$ by a
  type-preserving homeomorphism and consists of a union of blocks and tubes from
  the split geometry model for $E$.
\item All split components of $E_2$ are incompressible, i.e. if $K$ is
  a split component of $E_2$, then the inclusion $i : K \rightarrow M$
  induces an injective map $i_\ast : \pi_1 (K) \rightarrow \pi_1 (M)$.
\end{enumerate}
\label{incompressible}
\end{prop}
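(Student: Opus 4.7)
The plan is to argue by contradiction using Theorem~\ref{elinmd}, which places the ending lamination $\lambda$ of $E$ in the Masur domain $\PP\MM\DD(S)/\Mod_0(S)$. If no desired $E_2$ exists, then for some sequence $i_n \to \infty$ the block $B_{i_n}$ of the split geometry structure contains a split component $K_n$ which is compressible in $M$.

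For each $n$, I would apply the Loop Theorem in $M$ to $K_n$ to extract an essential simple closed curve $\gamma_n \subset \partial K_n$ bounding an embedded disk in $M$. The boundary $\partial K_n$ consists of flat pants-pieces of the split surfaces $\Sigma_{i_n}, \Sigma_{i_n+1}$ together with annular pieces of splitting Margulis tubes; since the cores of these tubes are loxodromic in $G$, $\gamma_n$ is not homotopic into any tube annulus, and may therefore be isotoped within $K_n$ to lie on the flat-pants portion of one of the bounding split surfaces, say $\Sigma_{i_n}$. The type-preserving homeomorphism $E \cong S \times [0,\infty)$ carries $\gamma_n$ to an essential simple closed curve $\bar\gamma_n$ on $S$. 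Since $\bar\gamma_n$ is null-homotopic in $M$ and $\pi_1(H) \to \pi_1(M)$ is an isomorphism ($H$ being a core), $\bar\gamma_n$ is null-homotopic in $H$, and the Loop Theorem applied in $H$ produces an embedded compressing disk; thus $[\bar\gamma_n] \in \DD(S)$. Split components fall into only finitely many topological types (bounded in terms of the genus of $S$), so the combinatorial complexity of $\gamma_n$ on $\partial K_n \cap \Sigma_{i_n}$ is uniformly bounded, which translates into a uniform bound on the intersection numbers of $\bar\gamma_n$ with the pants curves of $\Sigma_{i_n}$ that meet $\partial K_n$.

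Passing to a subsequence, $[\bar\gamma_n]\to[\mu]$ in $\PML(S)$, and $[\mu]\in cl(\DD(S))$ because each $[\bar\gamma_n]\in\DD(S)$. By construction of the split geometry model, suitable representatives of the pants decompositions of the $\Sigma_{i_n}$ converge in $\PML(S)$ to a representative of $\lambda$; continuity of the geometric intersection pairing together with the uniform bound above forces $i(\mu, \lambda) = 0$ in the limit, contradicting $\lambda \in \PP\MM\DD(S)/\Mod_0(S)$, which requires $i(\mu, \lambda) > 0$ for all $[\mu]\in cl(\DD(S))$. This yields an index $i_0$ beyond which every split component is incompressible in $M$; the sub-end $E_2$ is then taken to be the closure of the union of the blocks $B_i$ together with all splitting tubes for $i \geq i_0$, visibly a union of blocks and tubes homeomorphic to $S \times [0, \infty)$ via the restricted type-preserving homeomorphism. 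The hard part will be the isotopy step placing $\gamma_n$ on a single bounding split surface with controlled intersection with the associated pants decomposition; this requires a careful analysis of the topology of a split component as a piece of the block $B_{i_n}$ cut by a uniformly bounded number of splitting Margulis tubes. A secondary subtlety is the $\Mod_0(S)$-ambiguity in $\lambda$: one must select representatives of the successive pants decompositions and of $\lambda$ coherently so that the convergence statement and the intersection-number computation are meaningful.
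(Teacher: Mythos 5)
Your overall strategy matches the paper's: argue by contradiction, apply the Loop Theorem to produce compressing curves in $\DD(S)$, and contradict Theorem~\ref{elinmd} (ending lamination in the Masur domain). However, you miss the paper's key technical device and your substitute for it does not hold up.

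The paper's argument is the following. A compressible split component $K_i = S_i \times I$ for a subsurface $S_i \subset S$ yields, via the Loop Theorem applied in $H$, a compressing curve $\sigma_i \subset S_i$ with $\sigma_i \in \DD(S)$. Now let $\alpha_i$ be the core curve of a Margulis tube $T_i$ bounding $K_i$. Since $T_i$ is disjoint from the interior of $S_i$, one has $i(\alpha_i, \sigma_i) = 0$ automatically. The $\alpha_i$ are short curves whose geodesic realizations exit $E$, so (after subsequencing and normalizing) $\alpha_i \to \lambda$ in $\PML(S)$, while $\sigma_i \to \mu \in cl(\DD(S))$. Continuity of the intersection pairing then gives $i(\lambda,\mu)=0$, contradicting $\lambda$ in the Masur domain. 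The entire force of the proof rests on the single fact that the core of a splitting tube is disjoint from the compressing curve produced inside the adjacent split component.

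You replace this with an attempt to bound $i(\bar\gamma_n, P_n)$ directly, where $\bar\gamma_n$ is the compressing curve and $P_n$ the pants decomposition underlying $\Sigma_{i_n}$. This is the step you yourself flag as hard, and it is in fact the gap: the claim that the finiteness of topological types of split components forces a uniform bound on the combinatorial complexity of $\gamma_n$ in $K_n$ is not correct. For a fixed topological type of $K_n$ there are infinitely many isotopy classes of essential simple closed curves on $\partial K_n$ bounding disks in $M$, and the one the Loop Theorem happens to deliver can wind arbitrarily many times around the annular pieces of $\partial K_n$, producing unbounded intersection with the pants curves. Nothing in the split-geometry setup controls this. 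The paper sidesteps the issue entirely by not needing any intersection bound between the compressing curves and the exiting pants curves: it instead pairs each compressing curve with the core of an adjacent splitting tube, for which the intersection is exactly zero for trivial topological reasons. Adopting that pairing repairs your argument; without it, the limiting intersection-number computation is unsupported.

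A minor point: you apply the Loop Theorem in $M$ and then transport the compressing curve back into $H$. Since $\pi_1(H)\to\pi_1(M)$ is an isomorphism, it is cleaner to apply the Loop Theorem once directly in $H$, using $S_i\subset S=\partial H$, which is what the paper does and immediately places $\sigma_i$ in $\DD(S)$.
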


\begin{proof} Suppose not. Then there exists a sequence of
split components $K_i$ exiting the end $E$ such that 
 $i_\ast : \pi_1 (K) \rightarrow \pi_1 (M)$ is not injective. Since
$K_i$ are split components, $K_i = S_i \times I$ for some subsurface
$S_i$ of $S$. By the
Loop Theorem (see for instance, Hempel \cite{hempel-book}), there
exist simple closed curves $\sigma_i \subset S_i$ such
that $\sigma_i$ bound embedded topological disks in $H$. Hence
$\sigma_i \in  \DD (S)$. Let $T_i$ be a splitting tube bounding
$K_i$. Then $T_i$ exit $E$ and has a core curve $\alpha_i$ which in turn corresponds to a simple closed curve on $S$. It
follows that $\alpha_i$ is disjoint from an essential disk and hence
some $\sigma_i' \in \DD(S)$. Since $\alpha_i$ are simple closed curves
whose geodesic realizations exit $E$,
it follows that
any such
sequence of curves $\alpha_i$ converges (in $\PML(S)$) to the ending lamination
$\lambda$ corresponding to the end $E$. Hence
$\lambda \in cl(\DD(S))$ and  cannot lie in the Masur
domain. This contradicts Theorem \ref{elinmd}. The proposition follows. \end{proof}

Henceforth we shall choose $l$ (while fixing $l-$thin Margulis tubes in the construction of the split geometry model)
to be small enough, so that  $(E \setminus E_2)$ is contained in a thick block and hence  all split components are incompressible.
As an immediate consequence we have the following.

\begin{lemma}
If $K$ is a split component, then $\pi_1(K)( \subset \pi_1(M))$ is
geometrically finite (Schottky, in the absence of parabolics).
\label{hypqc-free}
\end{lemma}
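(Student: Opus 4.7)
The plan is to unpack what a split component looks like topologically, then use the hyperbolic quasiconvexity already established in Lemma \ref{hypqc-a} to conclude geometric finiteness.

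First I would observe that any split component $K$ is, by construction, a (closed) bounded component of the region between two consecutive split surfaces $\Sigma_i^s$ and $\Sigma_{i+1}^s$ after removing the $l$-thin Margulis tubes that split both. Thus $K$ is homeomorphic to $\Sigma \times [0,1]$ for some proper (compact) subsurface $\Sigma$ of $S$ with non-empty boundary (the boundary circles are exactly the meridians of the splitting Margulis tubes together with the circles at which $\partial S$ meets $K$). In particular $\pi_1(K)\cong\pi_1(\Sigma)$ is free of finite rank. Thanks to Proposition \ref{incompressible} (and the choice of $l$ fixed just before this lemma), the inclusion $K\hookrightarrow M$ induces an injection $\pi_1(K)\hookrightarrow\pi_1(M)$, so $\pi_1(K)$ is naturally a finitely generated free subgroup of $G$.

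Next I would promote this to quasiconvexity of the \emph{subgroup} $\pi_1(K)$. Let $\widetilde{K}\subset\widetilde{N}$ be a lift of $K$ to the universal cover, and let $\Gamma_K<G$ be its stabilizer; then $\Gamma_K$ is conjugate in $G$ to $\pi_1(K)$ and acts properly on $\widetilde{K}$ with compact quotient modulo cusps. By Lemma \ref{hypqc-a}, the convex hull (minus cusps) of $\widetilde{K}$ lies in a uniform hyperbolic neighborhood of $\widetilde{K}$ in $\widetilde{N}$. Because $\Gamma_K$ acts cocompactly on $\widetilde{K}$ (minus a $\Gamma_K$-equivariant neighborhood of cusps), it acts cocompactly on its convex hull minus cusps. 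This is precisely the standard definition of a geometrically finite Kleinian subgroup: the quotient of the convex core by $\Gamma_K$ is compact after removing cusp neighborhoods. Hence $\pi_1(K)$ is a geometrically finite subgroup of $G$.

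Finally, in the no-parabolics case, the augmented Scott core is simply a Scott core and the split component $K$ contains no cusps. Then $\Gamma_K$ acts cocompactly on the whole convex hull of $\widetilde{K}$, which gives a convex cocompact subgroup. A convex cocompact Kleinian group that is free of finite rank is by definition a Schottky group. Thus $\pi_1(K)$ is Schottky in the absence of parabolics, as claimed.

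The only step requiring real care is the second one, where we must be sure we are invoking Lemma \ref{hypqc-a} legitimately: it needs $K$ to be an incompressible split component, which is exactly the standing assumption installed after Proposition \ref{incompressible}. Everything else — the free product structure, the transfer from quasiconvexity of $\widetilde K$ to geometric finiteness of $\Gamma_K$, and the identification convex cocompact + free $=$ Schottky — is formal once the topological picture of $K\cong\Sigma\times[0,1]$ is in hand.
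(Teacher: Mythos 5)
Your proof is correct, and it takes essentially the same route as the paper, which simply cites Lemma \ref{hypqc-a}; you have filled in the details (the product structure $K \cong \Sigma \times [0,1]$ giving a free fundamental group, the passage from Lemma \ref{hypqc-a}'s quasiconvexity of $\widetilde K$ to cocompactness of $\Gamma_K$ on the convex hull minus cusps, and the identification convex cocompact $+$ free $=$ Schottky) that the paper leaves implicit.
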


\begin{proof} Follows from Lemma \ref{hypqc-a}. \end{proof}

\begin{prop} If $K$ is a split component, then $\til{K} $ is
  uniformly graph-quasiconvex in  $\til{M}$.
\label{gr-qc-free}
\end{prop}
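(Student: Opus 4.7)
The plan is to reduce the statement to the already-established Proposition \ref{gr-qc-free-a}, which is the identical assertion restricted to \emph{incompressible} split components. The single new ingredient needed is that, in the present setup, every split component is incompressible in $M$. This is exactly what Proposition \ref{incompressible} provides, combined with the choice of thinness parameter $l$ indicated in the paragraph immediately preceding this proposition: pick $l$ small enough that $E \setminus E_2$ is absorbed into a single thick block, so that no split component of the split geometry model of $E$ lies outside $E_2$. By Proposition \ref{incompressible} every split component is then incompressible in $M$, placing us exactly under the hypothesis of Proposition \ref{gr-qc-free-a}; applying that proposition yields a single constant $\Delta'$, independent of $K$, with $\diam_G(CH(\til K)) \le \Delta'$ for every split component $\til K$.

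For orientation, the reason Proposition \ref{gr-qc-free-a} holds is as follows. The hyperbolic quasiconvexity constant of each incompressible $\til K$ in $\til M$ may a priori depend on $K$ -- this is recorded in Lemma \ref{hypqc-a} via $C_0 = C_0(K)$ -- but this $K$-dependence is absorbed when one passes from the hyperbolic metric to $d_G$. Once $CH(\til K)$ is known to sit in a hyperbolic $C_0$-neighborhood of $\til K$, the uniform $\epsilon_1$-separation of split surfaces together with the bounded bi-Lipschitz geometry of blocks forces only boundedly many extended split components to meet this neighborhood; since each extended split component is electrocuted to $d_G$-diameter one, one obtains a uniform $d_G$-bound. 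Uniformity of this bound across different $K$'s follows because the combinatorial type of a split component is controlled by the fixed topological type of the underlying surface $S$, making the counting estimate above uniform.

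The only genuinely new obstacle compared with \cite{mahan-split} is compressibility of a split component, and this has already been defeated by Proposition \ref{incompressible} together with the present choice of $l$; beyond that, the argument is a direct import from Proposition \ref{gr-qc-free-a}.
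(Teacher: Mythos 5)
Your proposal is correct and takes exactly the paper's route: the paper's proof of Proposition \ref{gr-qc-free} is the single line ``Follows from Proposition \ref{gr-qc-free-a},'' relying on the preceding choice of $l$ and Proposition \ref{incompressible} to guarantee all split components are incompressible. Your additional ``orientation'' paragraph sketching why Proposition \ref{gr-qc-free-a} itself holds is heuristic and not needed here (and glosses over how the $K$-dependence of $C_0(K)$ from Lemma \ref{hypqc-a} is actually absorbed in \cite{mahan-split}), but since it is offered only as background rather than as part of the reduction, it does not affect the correctness of the proof.
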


\begin{proof} Follows from Proposition \ref{gr-qc-free-a}. \end{proof}

\begin{prop}  
$(\til{M}, d_G)$ is a hyperbolic metric space.
\label{graph_metric_hyperbolic}
\end{prop}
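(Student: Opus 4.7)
The plan is essentially to reduce the statement to Proposition \ref{dGhyp-a}, which already gives Gromov-hyperbolicity of $(\til{M}, d_G)$ under the assumption that every split component of $\til M$ is incompressible. The work is to verify that this hypothesis is in force in the present setting.

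First I would recall that Proposition \ref{incompressible} produces a sub-end $E_2 \subset E$, consisting of a union of blocks and tubes of the split geometry model, such that every split component contained in $E_2$ is incompressible in $M$. By the choice of $l$ (made immediately after Lemma \ref{hypqc-free}), the complement $E \setminus E_2$ is absorbed into a single thick block, so no splitting Margulis tubes occur outside $E_2$; in particular every split component of $E$ is either one of the incompressible pieces supplied by Proposition \ref{incompressible}, or a piece of the bounded transition region that is itself a bi-Lipschitz product. Applying this simultaneously to each degenerate end of $M$ (and noting that the augmented Scott core $H$ contributes no split components of its own in the split geometry description), we get that every split component of the model $M$ has incompressible image in $M$.

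With this in hand, the extended split components $\KK^\prime$ used to define $d_G$ are all incompressible. Proposition \ref{dGhyp-a} (recalled from \cite{mahan-split}) applies verbatim and yields that $(\til M, d_G) = \EE(\til M, \KK^\prime)$ is Gromov-hyperbolic, as required.

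The only potentially delicate point is the passage from ``split components deep in each end are incompressible'' to ``all split components are incompressible,'' which is precisely what motivates the choice of $l$ smaller than the Margulis-Otal constant: shrinking $l$ throws the finitely many remaining splitting tubes inside a single thick block, so there is no loss of generality in the hypothesis of Proposition \ref{dGhyp-a}. Once this bookkeeping is done, no further analysis is required and the proposition is immediate.
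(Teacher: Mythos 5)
Your argument is the paper's argument spelled out: the paper's proof of this proposition is literally ``Follows from Proposition \ref{dGhyp-a},'' with the incompressibility hypothesis supplied by Proposition \ref{incompressible} together with the paragraph that follows it, where $l$ is shrunk so that $E \setminus E_2$ sits inside a thick block and hence all split components are incompressible. One small pointer slip: that choice of $l$ appears just \emph{before} Lemma \ref{hypqc-free}, not after it, but this does not affect the substance of your reasoning.
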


\begin{proof} Follows from Proposition \ref{dGhyp-a}. \end{proof}

\smallskip

\subsection{Constructing Quasidisks}\label{qd}

The construction in this subsection may be regarded as a  {\it
  graph-metrized coarse} 
analogue of an unpublished  construction due to Miyachi \cite{miyachi-ct} (see also
  Souto \cite{souto-ct} ). The main technical difference between Miyachi's construction and ours is that Miyachi
  constructs  continuous images of disks that actually separate the universal cover $\til M$, 
whereas we only construct quasidisks. As a consequence it becomes technically
  more difficult for us to prove that  quasidisks coarsely separate. This is why we need a special family of paths which we shall call 'admissible
  paths' in the next subsection which either intersect or come close to the quasidisks we construct below.

Recall that
$M$ is a hyperbolic 3-manifold and $H$ its Scott core (augmented Scott core, when $M$ has parabolics). Also, $E$
is an end such that  $S = H\cap E$ is compressible. 
We choose a
collection of essential simple closed curves $\sigma_1 \cdots \sigma_g$ on $S$ bounding
disks $D_1 \cdots D_g$ with neighborhoods $D_i \times ( -\epsilon ,
\epsilon )$ such that each component of $H \setminus
\bigcup_i D_i \times ( -\epsilon ,
\epsilon )$ is either a ball or has incompressible boundary (rel. cusps).  Also assume that $\sigma_i$ are geodesics in
the intrinsic metric on $S$. To avoid multiple indices we fix an end $E$ of $M$ and
describe the construction of quasidisks in $E$. Next, fix a split geometry structure on $E$ as a union of
contiguous blocks $B_k$, where each $B_k$ is either a split block, or
a thick block. 

Further, let $\partial B_k = S_{k-1} \cup S_k$ with $S_k$ the {\it
  upper boundary} and $S_{k-1}$ the {\it lower boundary}. Also let
  $S=S_0$ and $\sigma_i = \sigma_{i0}$. Let $\sigma_{ik}$ be the
  shortest closed curve in the split metric on $S_k$ (i.e. in the
  pseudometric obtained by electrocuting annular intersections of
  splitting Margulis tubes with the split level surface $S_k$) homotopic in $E$ to $\sigma_i = \sigma_{i0}$. Let
  $\overline{A_i} = D_i \bigcup_k \sigma_{ik} \subset M$ be the union of the disk
  $D_i$ and the quasi-annulus $\bigcup_k \sigma_{ik} $. Then any lift 
  $A_i$ of  $\overline{A_i}$ to $\til{M}$ is isometric to  $\overline{A_i}$ as $D_i$ is homotopically
  trivial and $\sigma_{ik}$ are all freely homotopic to $\sigma_i =
  \sigma_{i0} = \partial D_i$. 

We want to show that $A_i$ are quasiconvex in $(\til{M}, d_G)$ which in turn
is hyperbolic by Proposition \ref{graph_metric_hyperbolic}. 

\medskip

\noindent{\bf qi Rays}\\ Fix a $\sigma$ and the disk $D$ it bounds. Let $A = D \bigcup_k \sigma_{k} \subset M$, where $\sigma_k \subset S_k$.
Lift $\bigcup_k \sigma_{k}$ to the universal cover $\til{\til E}$ of $E$ such that any lift $\til{\sigma_k}$ lies in the universal cover $ \til{\til{S_k}}$ of $S_k$ (We are using this notation to distinguish
from lifts to $\til M$). Let $\lambda_k$ be any such lift $\til{\sigma_k}$.
We then  have the following from \cite{mahan-split}.

\begin{lemma}  \cite[Lemma 5.9]{mahan-split}
There exists $C \geq 0$ and for all $k\in \natls$, there exists $B(k)>0$  such that the following hold:
\begin{enumerate}
\item For $x_{k} \in \lambda_{k}$ there
exists
$x_{k-1} \in \lambda_{k-1}$ with $d_G (x_{k}, x_{k-1}) \leq C$ and $d_M (x_{k}, x_{k-1}) \leq B(k)$.
\item Similarly 
there
exists
$x_{k+1} \in \lambda_{k+1}$ with $d_G (x_{k}, x_{k+1}) \leq C$ and $d_M (x_{k}, x_{k+1}) \leq B(k)$.
\end{enumerate}
  Hence, for all $n$ and $x \in \lambda_n$,
 there exists a $C$-quasigeodesic ray $r$ (in the $d_G$-metric) 
such that $r(k) \in \lambda_k $ for all $k$
and $r(n) = x$. 
\label{dGqgeod0}
\end{lemma}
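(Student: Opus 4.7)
The strategy is to move across one block at a time. Each block $B_{k+1}$ between successive split surfaces $S_k$ and $S_{k+1}$ is either thick (hence uniformly bi-Lipschitz to $S\times[0,1]$) or a split block whose interior is a product interrupted by splitting Margulis tubes. Both $\sigma_k$ and $\sigma_{k+1}$ are freely homotopic in $E$ to the original bounding curve $\sigma$ of the disk $D$, so they co-bound an embedded annulus $A_{k+1}\subset B_{k+1}$ freely homotopic to $\sigma$. A coherent family of lifts is produced by first fixing $\lambda_n$ and then inductively defining $\lambda_{k+1}$ to be the lift of $\sigma_{k+1}$ forming the opposite boundary of the lifted annulus $\til{A_{k+1}}\subset \til{B_{k+1}}$ containing $\lambda_k$ on its other boundary (and symmetrically for $\lambda_{k-1}$).

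\textbf{Crossing a single block.} Given $x_k\in\lambda_k$, push vertically along $\til{A_{k+1}}$ to a candidate $x_{k+1}\in\lambda_{k+1}$. If $B_{k+1}$ is thick, the $L_1$-bi-Lipschitz product structure provided by Theorem~\ref{wsplit} bounds the $d_M$-length of this push, and hence $d_G(x_k,x_{k+1})$, by a constant depending only on $L_1$ and the topological type of $S$. If $B_{k+1}$ is a split block, the annulus $\til{A_{k+1}}$ passes through a uniformly bounded combinatorial number of extended split components (and splitting tubes, viewed inside $\KK^\prime$), and each such piece has $d_G$-diameter at most $1$ by the very definition of the graph metric; this produces a uniform upper bound $C$ on $d_G(x_k,x_{k+1})$. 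The $d_M$-displacement, however, is controlled only by a block-dependent constant $B(k)$, because thin splitting tubes may be arbitrarily deep.

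\textbf{Assembling the ray.} Iterating the crossing to $x\in\lambda_n$ yields a sequence $(x_k)_{k\geq n}$ with $x_k\in\lambda_k$ and $d_G(x_k,x_{k+1})\leq C$; concatenating $d_G$-geodesic segments between consecutive $x_k$'s produces the desired ray $r$. To upgrade $r$ from a $C$-Lipschitz path to a $C$-quasigeodesic one observes that each block contributes a definite minimum $d_G$-distance to any crossing: in a thick block by the bi-Lipschitz identification, and in a split block by the fact that any path from $\Sigma_k^s$ to $\Sigma_{k+1}^s$ must traverse at least one extended split component whose boundary separates the two split surfaces. The main technical subtlety is the case in which $\sigma$ itself runs along a splitting curve of the pair $(\Sigma_k,\Sigma_{k+1})$; then the relevant portion of $\til{A_{k+1}}$ is absorbed into the electrocuted annular intersection of the splitting tube with each of $\Sigma_k^s$ and $\Sigma_{k+1}^s$, contributing nothing to the $d_G$-cost but being absorbed into $B(k)$ for the $d_M$-cost. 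Hyperbolicity of $(\til M, d_G)$ from Proposition~\ref{graph_metric_hyperbolic} is not needed in the construction itself but guarantees that $r$ is a well-behaved quasigeodesic for later applications.
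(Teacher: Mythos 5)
Your proof is correct and matches the approach implicit in the paper (the paper itself cites \cite[Lemma 5.9]{mahan-split} for the per-block bounds and then sketches the quasigeodesicity argument in the paragraph following the lemma, via $d_G(x_i,S_{i-1})=1$ giving the lower bound $d_G(x_i,x_j)\geq|i-j|$ against the upper bound $C|i-j|$). One minor omission: the conclusion asks for a ray $r$ with $r(k)\in\lambda_k$ for \emph{all} $k\in\natls\cup\{0\}$ and $r(n)=x$, so the iteration must be run in both directions from level $n$ (downward to $\lambda_{n-1},\dots,\lambda_0$ using item~(1) and upward using item~(2)), whereas you only assemble the sequence $(x_k)_{k\geq n}$; the fix is immediate since both directions are provided by the lemma's two bullet points.
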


By construction of split blocks, $d_G (x_{i}, S_{i-1}) =
1$. Therefore inductively, $d_G (x_{i}, S_{j}) =
|i-j|$. Hence $d_G (x_{i}, x_{j}) \geq
|i-j|$. By construction, $d_G (x_{i}, x_{j}) \leq
C|i-j|$.

Hence, given $p \in \lambda_{i}$ the sequence of points $ x_{n}, n \in \mathbb{N} \cup \{ 0 \}$ 
 with $x_i = p$ gives by Lemma \ref{dGqgeod0} above, a quasigeodesic in
the $d_G$-metric. Such quasigeodesics shall be referred to as {\em $d_G$-quasigeodesic rays}. 

After projecting $\til E$ to $\til{M} \setminus \til{H}$ we have the following conclusion.

\begin{lemma}
There exists $C \geq 0$ and for all $k$ there exists $B_k$ satisfying the following:\\  For all $x_{ik} \in \sigma_{ik}$ there
exists
$x_{i,k-1} \in \sigma_{i,k-1}$ with $d_G (x_{ik}, x_{i,k-1}) \leq C$ and $d (x_{ik}, x_{i,k-1}) \leq B_k$.
\label{dGqgeod}
\end{lemma}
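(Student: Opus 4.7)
The statement is a direct push-forward of Lemma \ref{dGqgeod0} from the universal cover $\til{\til E}$ of $E$ down to the lift $\til E\subset\til M$ along the natural covering map $p\colon \til{\til E}\to\til E$ that is induced by the inclusion $E\hookrightarrow M$. My plan is to apply Lemma \ref{dGqgeod0} upstairs to carefully chosen lifts of the $\sigma_{ik}$, project via $p$, and then check that $p$ is $1$-Lipschitz in both $d$ and $d_G$; here I read $\sigma_{ik}$ in the statement as shorthand for the distinguished lift $\til\sigma_{ik}\subset\til M$ appearing as a level-$k$ quasi-circle of the quasidisk $\til A_i$.

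First I would fix compatible lifts. Since $\sigma_{i0}=\boundary D_i$ bounds the disk $D_i\subset H$, every lift $\til\sigma_{i0}\subset\til M$ closes up as $\boundary \til D_i$. Because each $\sigma_{ik}$ is freely homotopic to $\sigma_{i0}$ in $E$, the lift $\til\sigma_{ik}\subset\til M$ appearing in $\til A_i$ is again a closed loop in $\til M$. Pulling these closed loops back to $\til{\til E}$ produces lines $\lambda_{ik}\subset\til{\til S_k}$ that project onto $\til\sigma_{ik}$ under $p$, chosen coherently as $k$ varies via the annular free homotopies between consecutive $\sigma_{ik}$ and $\sigma_{i,k+1}$.

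Given $x_{ik}\in\til\sigma_{ik}$, I then pick any preimage $\hat x_{ik}\in\lambda_{ik}$ under $p$ and apply Lemma \ref{dGqgeod0} in $\til{\til E}$ to produce $\hat x_{i,k-1}\in\lambda_{i,k-1}$ with $d_G(\hat x_{ik},\hat x_{i,k-1})\le C$ and $d(\hat x_{ik},\hat x_{i,k-1})\le B(k)$. Setting $x_{i,k-1}:=p(\hat x_{i,k-1})\in\til\sigma_{i,k-1}$, the required inequalities will follow once I show that $p$ does not increase either metric. For $d$ this is automatic, since $p$ is a local isometry of the Riemannian structures. For $d_G$ I would invoke Proposition \ref{incompressible}: with $l$ chosen as in the remark following that proposition, every extended split component met by the split level surfaces $S_k$ that arise in the argument is incompressible in $M$, so each lift of such a component in $\til{\til E}$ maps homeomorphically onto a lift in $\til E\subset\til M$. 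Consequently $p$ sends the electrocuted family defining $d_G$ upstairs bijectively onto the electrocuted family defining $d_G$ downstairs, and is therefore $1$-Lipschitz in $d_G$.

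The hard part is exactly this $d_G$-Lipschitz claim; everything else is bookkeeping. It will rely crucially on the fact, established in Section \ref{incomp}, that the split-geometry model has been set up with $l$ small enough that all split components in question are incompressible in $M$, since only then do the electrocuted pieces upstairs and downstairs match up under $p$. Granting this correspondence, both desired inequalities transfer immediately from the upstairs statement of Lemma \ref{dGqgeod0}.
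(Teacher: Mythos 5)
Your proposal is correct and is essentially the route the paper takes: Lemma~\ref{dGqgeod} is obtained from Lemma~\ref{dGqgeod0} simply by projecting along the covering $\widetilde{\widetilde E}\to\widetilde E\subset\widetilde M$, which the paper compresses into the single sentence ``After projecting $\widetilde E$ to $\widetilde M\setminus\widetilde H$ we have the following conclusion.'' You have merely made explicit the point the paper leaves tacit, namely that the covering is $1$-Lipschitz for both $d_M$ (local isometry) and $d_G$ (incompressibility of split components via Proposition~\ref{incompressible} guarantees that lifts of electrocuted pieces upstairs map into electrocuted pieces downstairs), and that the lifts $\lambda_{ik}$ should be chosen compatibly across levels; these checks are correct and worth recording.
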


The following Corollary will turn out to be quite useful. Here we replace $A$ by the collection $A_i$ of annuli
constructed in the third paragraph of this section.

\begin{cor}  There exists $C \geq 0$ such that for all $k$ and all $x_{ik} \in \sigma_{ik}$
there exists $q \in \sigma_{i0}$ and a sequence of points $p = x_{ik},
\cdots, x_{i0}=q$ which is a quasigeodesic in $(\til{M}, d_G)$.
\label{dGqgeod1}
\end{cor}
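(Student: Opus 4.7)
\textbf{Proof Plan for Corollary \ref{dGqgeod1}.} The plan is to iterate Lemma \ref{dGqgeod} downward from level $k$ to level $0$ and then verify the quasigeodesic inequalities in the $d_G$ metric using the layered structure of split blocks. Concretely, starting from the given point $p = x_{ik} \in \sigma_{ik}$, Lemma \ref{dGqgeod} produces $x_{i,k-1} \in \sigma_{i,k-1}$ with $d_G(x_{ik}, x_{i,k-1}) \le C$. Applying the lemma recursively, we obtain a finite sequence $x_{ik}, x_{i,k-1}, \ldots, x_{i1}, x_{i0}$ with $x_{ij} \in \sigma_{ij}$ and $d_G(x_{ij}, x_{i,j-1}) \le C$ for every $1 \le j \le k$. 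Set $q = x_{i0}$.

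To check the upper quasigeodesic bound, chain the step estimates: for any $0 \le m \le j \le k$,
\[
d_G(x_{ij}, x_{im}) \;\le\; \sum_{l=m+1}^{j} d_G(x_{il}, x_{i,l-1}) \;\le\; C(j-m).
\]
For the lower bound we use the layered structure recorded in the discussion following Lemma \ref{dGqgeod0}. Because $x_{ij}$ lies on $\sigma_{ij} \subset S_j$, and since the split/thick block between $S_{j-1}$ and $S_j$ has $d_G$-diameter exactly $1$ (split components are electrocuted and thick blocks are uniformly bi-Lipschitz to a product, hence have bounded $d_G$-diameter which is $1$ up to the normalization in the construction of $d_G$), we get inductively $d_G(x_{ij}, S_m) \ge |j - m|$, and in particular
\[
d_G(x_{ij}, x_{im}) \;\ge\; |j-m|.
\]
Combining the two estimates, the sequence $p = x_{ik}, x_{i,k-1}, \ldots, x_{i0} = q$ is a $(C,0)$-quasigeodesic in $(\til{M}, d_G)$.

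The only mildly delicate point is to ensure that the iterated choice of points, originally supplied in the universal cover $\til{\til E}$ of $E$ via Lemma \ref{dGqgeod0}, descends consistently to lifts in $\til M$. This is automatic once we fix one lift of $\sigma_{ik}$ to $\til M$ at the top: the lifts of $\sigma_{i,j-1}$ picked out by successive applications of Lemma \ref{dGqgeod} land in the appropriate adjacent lift $\til S_{j-1} \subset \til M$ of the split surface, because the split blocks are embedded in $E$ and hence lift in a single-valued way once the top sheet is fixed. No additional obstacle arises; the Corollary is essentially a reformulation of Lemma \ref{dGqgeod} packaged as a quasigeodesic ray statement, which will be the form convenient for the quasiconvexity arguments for the $A_i$ in the sequel.
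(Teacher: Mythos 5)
Your proposal is correct and follows essentially the same route as the paper: iterate Lemma \ref{dGqgeod} down the levels, chain the per-step $d_G$-bound to get the upper quasigeodesic estimate, and use the layered structure (split blocks have $d_G$-diameter $1$, so $d_G(x_{ij},S_m)\ge|j-m|$) for the lower bound, exactly as in the paragraph following Lemma \ref{dGqgeod0}. The only difference is expository: you spell out the $(C,0)$-quasigeodesic inequalities and the lifting consistency, both of which the paper leaves implicit.
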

\begin{proof}
By construction of split blocks, $d_G (x_{ik}, S_{k-1}) =
1$. 

Hence, given $p \in \sigma_{ik}$ the sequence of points $p = x_{ik},
\cdots, x_{i0}$ gives by Lemma \ref{dGqgeod} above, a quasigeodesic in
the $d_G$-metric lying entirely on $A_i$ joining $p$ to a point $q \in
D_i$. 
\end{proof}

We can choose a point $z_i \in D_i$ (quite arbitrarily) and extend any
 quasigeodesic constructed as above by adding on a path from $q$ to
 $z_i$ lying entirely in $D_i$ and having uniformly bounded length since
 $D_i$ has bounded diameter.

\begin{prop}
There exists $C_0 \geq 0$ such that each $A_j$ is $C_0$-quasiconvex in
 $(\til{M}, d_G)$.
\label{A-qc}
\end{prop}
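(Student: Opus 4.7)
The plan is to reduce quasiconvexity of $A_j$ to a thin-tripod argument in the hyperbolic space $(\til M, d_G)$, whose Gromov hyperbolicity is furnished by Proposition \ref{graph_metric_hyperbolic}. The key input is that every point of $A_j$ lies on a $d_G$-quasigeodesic to a fixed base point $z_j \in D_j$ which itself lies in a uniformly bounded $d_G$-neighborhood of $A_j$.

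For $p \in \sigma_{jk}$, Corollary \ref{dGqgeod1} produces a discrete $d_G$-quasigeodesic $p = x_k, x_{k-1}, \dots, x_0$ with $x_i \in \sigma_{ji}$ and consecutive $d_G$-distance at most $C$. Interpolating by $d_G$-geodesic segments of length at most $C$ between consecutive $x_i$ gives a continuous $d_G$-quasigeodesic from $p$ to $x_0 \in \sigma_{j0} = \partial D_j$ lying in the $C$-neighborhood of $A_j$. Following the paragraph after Corollary \ref{dGqgeod1}, one extends it by a path of length at most $\diam(D_j)$ inside $D_j$ to the fixed base point $z_j$; the resulting $r_p$ is a uniform $d_G$-quasigeodesic lying in a uniformly bounded $d_G$-neighborhood of $A_j$. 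For $p \in D_j$ itself, take $r_p$ to be a bounded path inside $D_j$.

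For $p, q \in A_j$, the rays $r_p$ and $r_q$ share the endpoint $z_j$. By the standard Morse lemma in the hyperbolic space $(\til M, d_G)$ (the $\HH = \emptyset$ case of Lemma \ref{farb1A}, in which the electric metric coincides with $d_G$), the quasigeodesic $r_p$ stays within bounded $d_G$-distance of the geodesic $[p, z_j]$, and similarly $r_q$ fellow-travels $[q, z_j]$. By $\delta$-thinness of geodesic triangles in $(\til M, d_G)$, the $d_G$-geodesic $[p, q]$ lies in a bounded $d_G$-neighborhood of $[p, z_j] \cup [z_j, q]$, hence of $r_p \cup r_q$, hence of $A_j$. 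This yields the required $C_0$-quasiconvexity of $A_j$.

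The main obstacle is really just keeping track of constants so that a single uniform $C_0$ works for all $p, q \in A_j$ and for all $j \in \{1, \dots, g\}$: one combines the quasigeodesic constant $C$ from Lemma \ref{dGqgeod0}, the hyperbolicity constant $\delta$ of $(\til M, d_G)$ from Proposition \ref{graph_metric_hyperbolic}, the Morse constant from Lemma \ref{farb1A}, and the uniform bound on $\diam(D_j)$ coming from the fact that $D_1, \dots, D_g$ are fixed once and for all in the augmented Scott core $H$. Crucially, the uniform bound on the $d_G$-diameter of the individual levels $\sigma_{jk}$ is \emph{not} needed for this argument, since the thin-tripod construction routes every pair through the common basepoint $z_j$ and lets hyperbolicity of $(\til M, d_G)$ handle the shortcutting.
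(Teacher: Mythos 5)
Your proof is essentially identical to the paper's: both route every pair $p,q \in A_j$ through the fixed basepoint $z_j \in D_j$ via the $d_G$-quasigeodesics furnished by Corollary \ref{dGqgeod1}, then invoke the Morse lemma and $\delta$-thinness of triangles in the hyperbolic space $(\til M, d_G)$ (Proposition \ref{graph_metric_hyperbolic}) to conclude. The only difference is cosmetic: you explicitly flag that the interpolated quasigeodesics lie in a bounded $d_G$-neighborhood of $A_j$ rather than literally on it, which is a slightly more careful rendering of the same step.
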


\noindent {\bf Proof:} By Lemma  \ref{dGqgeod} and Corollary \ref{dGqgeod1}
above, it follows that  there exist $K \geq 1$ such 
that for any two points $p_1, p_2$ in $A_j$ there exist $K$-
quasi-geodesics $\gamma_1, \gamma_2$ to $z_j \in D_j$. By Proposition
 \ref{graph_metric_hyperbolic}
we also have that $(\til{M}, d_G)$ is hyperbolic. Hence any geodesic
$\alpha_i$ ($i = 1,2$) joining $p_i$ to $z_j$ lies in some $K_1$
neighborhood of $\gamma_i$. Further, by hyperbolicity of $(\til{M},
d_G)$, we conclude that a geodesic $\beta$ joining $p_1, p_2$ lies in a
$K_2$-neighborhood of $\alpha_1 \cup \alpha_2$. Hence, finally,
$\beta$ lies in a $(K_1 + K_2)$ neighborhood of $\gamma_1 \cup
\gamma_2 \subset A_j$. Choosing $C_0 = K_1 + K_2$, we are
through. $\Box$

The quasidisks constructed above have the following property.

\begin{lemma}  Let $M$ be a model manifold of split geometry. Let $H$ be a Scott core  of $M$
(augmented Scott core if $M$ has parabolics)
and $\{ D_i \}$ a maximal collection of disjoint compressing disks in $H$.
Then 
there exists a function $\Theta : \mathbb{N} \rightarrow \mathbb{N}$ satisfying
$\Theta (n) \rightarrow \infty$ as $n \rightarrow \infty$ such that for all  $o \in \til{H}$ the following holds.\\
Let $ D$ be a lift of one of the $D_i$'s to $\til M$ and let $A$  be the quasidisk in $\til M$ constructed
from $D$ as above.

Then $d_M(D, o) \geq n $ implies that $d_M(A, o) \geq \Theta (n)$.
\label{contlemma-qd} 
\end{lemma}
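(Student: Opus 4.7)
The plan is to combine two complementary lower bounds on $d_M(p,o)$ for $p\in A$: one that is sharp when $p$ sits near the disk $D$, and one that is sharp when $p$ lies on $\sigma_k$ for large $k$. Write $A = D\cup\bigcup_{k\geq 0}\sigma_k$ as in the construction. For $p\in D$ we have $d_M(p,o)\geq d_M(D,o)\geq n$ by hypothesis, so fix $k\geq 1$ and $p\in\sigma_k$.

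For the first bound, iterate Lemma \ref{dGqgeod} with starting point $x_k := p$ to obtain points $p=x_k, x_{k-1},\dots, x_0$ with $x_j\in\sigma_j$ and $d_M(x_j, x_{j-1})\leq B_j$. Setting $F(k):=\sum_{j=1}^{k} B_j$, the triangle inequality yields $d_M(p, x_0)\leq F(k)$, and since $x_0\in\sigma_0 = \partial D \subset \overline{D}$, we conclude
\[
  d_M(p, o)\ \geq\ d_M(D, o) - F(k)\ \geq\ n - F(k).
\]

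For the second bound, use the $\epsilon_1$-separation of the split surfaces $\{S_j\}$ exiting the end $E$ (Theorem \ref{wsplit}). Because the stack of split blocks together with the splitting tubes identifies $E$ topologically with $S\times[0,\infty)$ with each $S_j$ a level set, every path in $E$ from $\sigma_k\subset S_k$ to the interface $\partial E = S$ crosses each intermediate $S_i$ for $0\leq i<k$, so by $\epsilon_1$-separation has length at least $k\epsilon_1$. Since any path in $M$ from $E$ to $H$ is forced to cross $\partial E$, this gives $d_M(\sigma_k, H)\geq k\epsilon_1$ in $M$. Lifting: as the covering map $\til M \to M$ is a local isometry and $\til H$ is the full preimage of $H$, any path in $\til M$ from $o\in\til H$ to a lift $\til\sigma_k$ projects to a path of the same length in $M$ from $H$ to $\sigma_k$; hence $d_M(o,\til\sigma_k)\geq d_M(\sigma_k, H)\geq k\epsilon_1$.

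Combining the two bounds, $d_M(p, o)\geq \max(n - F(k),\; k\epsilon_1)$ for every $p\in\sigma_k$ with $k\geq 1$. Define
\[
  \Theta(n)\ :=\ \min\!\Bigl\{n,\ \inf_{k\geq 1}\max\bigl(n - F(k),\ k\epsilon_1\bigr)\Bigr\}.
\]
Given $N>0$, set $K := \lceil N/\epsilon_1\rceil$; for $k\geq K$ the term $k\epsilon_1$ already dominates $N$, while for $1\leq k<K$ the inequality $n - F(k)\geq N$ holds once $n\geq N + F(K-1)$. Hence $\Theta(n)\to\infty$ as $n\to\infty$, completing the proof. The most subtle step is justifying that each $S_j$ topologically separates $E$ so that the crossing count is forced; this rests on the product-type stacking of split blocks, which in turn uses the incompressibility of split components sufficiently deep in the end (Proposition \ref{incompressible}).
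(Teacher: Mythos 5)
Your proof is correct and follows essentially the same route as the paper: you iterate Lemma \ref{dGqgeod} to bound the distance from a point on $\sigma_k$ back to the disk by a constant $F(k)=\sum_{j\leq k} B_j$ (the paper's $c_k$), combine this with the $\epsilon_1$-separation of split surfaces from Theorem \ref{wsplit} to get $d_M(p,o)\geq\max(n-F(k),k\epsilon_1)$, and then optimize over $k$ to define $\Theta$. You add a welcome topological justification of the separation bound and a cleaner verification that $\Theta(n)\to\infty$; you also correctly use $\epsilon_1$ (the separation constant of Theorem \ref{wsplit}) where the paper's proof has a minor typo ($\epsilon_0$).
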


\begin{proof} By Lemma \ref{dGqgeod} and Corollary  \ref{dGqgeod1},  there exists $b_1, \cdots , b_k, \cdots$
 and $z \in D$ such that for all   $x_{k} \in \sigma_{k} \subset A$, 
 $d_M (x_{k}, z) \leq (b_1 + \cdots + b_k) = c_k$(say).
Hence $d_M (x_{k}, 0) \geq (n-c_k)$.

By uniform $\epsilon_0$-separatedness of split surfaces (Theorem \ref{wsplit}), 
 $d_M (x_{k}, 0) \geq k\epsilon_0$.
Hence $d_M (x_{k}, 0) \geq max( (n-c_k), k\epsilon_0)$.
Choosing $\Theta (n) $ to be the largest value of $k\epsilon_0$ such that $k\epsilon_0 \leq n-c_k$
we are done. \end{proof}

\subsection{Reduction Lemma} Before we get into the proof of the existence
of Cannon-Thurston maps, we recall some material from Section 6 of \cite{mahan-split} that will help
streamline the proof.

The next Lemma  allows us to apply the criterion for existence of Cannon-Thurston maps
in Lemmas \ref{contlemma} and \ref{crit-relhyp} to electro-ambient quasigeodesics in $\til M$
 rather
than hyperbolic geodesics in $\til N$.
Lemma \ref{contlemma2} below is a paraphrasing of  what Lemmas 6.8 and 6.10 of  \cite{mahan-split}   prove
in the context of this paper. (Note that though
 \cite[Lemmas 6.8 and 6.10]{mahan-split} are stated for simply degenerate surface groups,  the relevant parts of the
proofs only use incompressibility
of split components. This is pointed out in  \cite[Lemma 8.7]{mahan-split}.)

\begin{lemma}  \cite[Lemma 8.7]{mahan-split} Let $N$ be the convex core of a complete hyperbolic $3-$manifold $N^h$ minus a neighborhood
of the cusps. Equip each degenerate end with a split geometry structure such that each split component
is incompressible. Let $M$ be the resulting model of split geometry and $F: N\rightarrow M$ be the bi-Lipschitz
homeomorphism between the two. Let $\til{F}$ be a lift of $F$ to the universal covers.
Then for all $C_0 > 0$, and $o \in \til{N}$
there exists a function $\Theta : \mathbb{N} \rightarrow \mathbb{N}$ satisfying
$\Theta (n) \rightarrow \infty$ as $n \rightarrow \infty$ such that the following holds.\\
For any $a, b \in \til{N}\subset \til{N^h}$, let $\lambda^h $ be the hyperbolic geodesic in $\til{N^h}$ joining
them and let $\lambda^h_{thick} = \lambda^h \cap  \til{N}$. Similarly let $\beta_{ea}^h$ be an electro-ambient
$C_0-$quasigeodesic without backtracking in $\til M \subset \EE(\til{M}, \KK^\prime )$
 joining $\til{F} (a)$, $\til{F} (b)$. Let
$\beta_{ea} = \beta_{ea}^h \setminus \partial \til{M}$ be the part of $\beta_{ea}^h$ lying away from
the (bi-Lipschitz) horospherical boundary of $ \til{M}$.

Then $d_M(\beta_{ea}, \til{F} (o)) \geq n $ implies that $d_{\Hyp^3}(\lambda^h_{thick}, o) \geq \Theta (n)$.
\label{contlemma2}
\end{lemma}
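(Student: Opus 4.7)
The plan is to reprise the arguments of Lemmas 6.8 and 6.10 of \cite{mahan-split}, exactly as indicated in \cite[Lemma 8.7]{mahan-split}, which established the same comparison in the simply degenerate surface-group case using only the incompressibility of split components. The hypothesis that was automatic there is supplied in the present setting by Proposition \ref{incompressible}. By bi-Lipschitzness of $\til F$, it is enough to prove the analogous statement with $d_{\Hyp^3}(\lambda^h_{thick}, o)$ replaced by $d_M(\til F(\lambda^h_{thick}), \til F(o))$, and then absorb the multiplicative distortion into $\Theta$.

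The core step is a block-by-block tracking estimate: enumerating the blocks $\til B_i$ of the split-geometry model exiting the end(s) of $M$, I would show that for each $i$ there exists $C(i)<\infty$ such that $\til F(\lambda^h_{thick}) \cap \til B_i$ and $\beta_{ea} \cap \til B_i$ lie in each other's $C(i)$-neighborhood in $d_M$. To prove this, view both paths as electric quasigeodesics without backtracking in $\EE(\til M, \KK^\prime)$, which is Gromov hyperbolic (Proposition \ref{dGhyp-a}) precisely because all split components are incompressible. By Lemma \ref{farb1A}, both paths then track a common hyperbolic $d_G$-geodesic joining $\til F(a)$ to $\til F(b)$, and hence each other, in $d_G$. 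The hyperbolic quasiconvexity of incompressible split components (Lemma \ref{hypqc-a}), together with the uniformly bounded $d_G$-diameter of their convex hulls (Proposition \ref{gr-qc-free-a}), then promotes $d_G$-tracking to $d_M$-tracking within each single block, where only finitely many split components intervene; this gives the bound $C(i)$.

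To conclude, fix $o \in \til N$ and let $\til F(o)$ lie in block $\til B_{i_0}$. Suppose $d_M(\beta_{ea}, \til F(o)) \geq n$. The uniform $\epsilon_1$-separation of split surfaces (Theorem \ref{wsplit}) gives $d_M(\til B_j, \til F(o)) \geq |j-i_0|\epsilon_1$ for every block $\til B_j$. Combining with the block-by-block tracking yields, for each $j$,
$$d_M\bigl(\til F(\lambda^h_{thick}) \cap \til B_j,\ \til F(o)\bigr) \;\geq\; \max\bigl(n - C(j),\ |j-i_0|\epsilon_1 - C(j)\bigr).$$
Defining $\Theta(n)$ to be the largest integer $k$ such that $C(i_0 \pm k') \leq n/2$ for all $|k'| \leq k$ forces $\Theta(n) \to \infty$ as $n \to \infty$ (since each $C(i)$ is finite), and one deduces $d_M(\til F(\lambda^h_{thick}), \til F(o)) \geq \Theta(n)$; bi-Lipschitzness of $\til F$ then gives the claimed hyperbolic bound. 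The main obstacle is the block-by-block upgrade from $d_G$-tracking to $d_M$-tracking, which fails for compressible split components and is made possible only by Proposition \ref{incompressible}, via Lemma \ref{hypqc-a}.
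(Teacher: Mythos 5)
Your reconstruction takes the same route as the paper's: the lemma is invoked directly from \cite[Lemma 8.7]{mahan-split}, and, as the adjacent proof of Lemma \ref{contlemma1} in this paper makes explicit, the argument is exactly the block-by-block $C(i)$-tracking combined with the uniform $\epsilon_1$-separation of Theorem \ref{wsplit} that you describe. Two small inaccuracies worth fixing in your sketch: $\til{F}(\lambda^h_{thick})$ is a $d_M$-quasigeodesic but not in general an electric quasigeodesic without backtracking, so the $d_G$-tracking should come from Lemma \ref{farb1A}(2) (equivalently Lemma \ref{ea-strong} applied with block-dependent quasiconvexity constants) rather than from treating both paths as electric quasigeodesics; and the second term of your max should simply be $|j-i_0|\epsilon_1$ rather than $|j-i_0|\epsilon_1 - C(j)$, since the separation estimate already bounds the $d_M$-distance from $\til{F}(o)$ to any point of $\til{B_j}$.
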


Combining Lemma \ref{contlemma2} with  Lemma \ref{contlemma1}, we have the following.

\begin{cor}   Let $N$ be the convex core of a complete hyperbolic $3-$manifold $N^h$ minus a neighborhood
of the cusps. Equip each degenerate end with a split geometry structure such that each split component
is incompressible. Let $M$ be the resulting model of split geometry and $F: N\rightarrow M$ be the bi-Lipschitz
homeomorphism between the two. Let $\til{F}$ be a lift of $F$ to the universal covers.
Then for all $C_0 > 0$, and $o \in \til{N}$
there exists a function $\Theta : \mathbb{N} \rightarrow \mathbb{N}$ satisfying
$\Theta (n) \rightarrow \infty$ as $n \rightarrow \infty$ such that the following holds.\\
For any $a, b \in \til{N}\subset \til{N^h}$, let $\lambda^h $ be the hyperbolic geodesic in $\til{N^h}$ joining
them and let $\lambda^h_{thick} = \lambda^h \cap  \til{N}$. Similarly let $\beta_{ea}^h$ be an electro-ambient
$C_0-$quasigeodesic without backtracking in $\til M \subset \EE(\til{M}, \KK\bigcup\TT_l )$
 joining $\til{F} (a)$, $\til{F} (b)$. Let
$\beta_{ea} = \beta_{ea}^h \setminus \partial \til{M}$ be the part of $\beta_{ea}^h$ lying away from
the (bi-Lipschitz) horospherical boundary of $ \til{M}$.

Then $d_M(\beta_{ea}, \til{F} (o)) \geq n $ implies that $d_{\Hyp^3}(\lambda^h_{thick}, o) \geq \Theta (n)$.
\label{contlemma3}
\end{cor}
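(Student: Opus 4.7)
The plan is simply to compose the two lemmas already at our disposal: Lemma \ref{contlemma1} lets us trade an electro-ambient quasigeodesic without backtracking in $\EE(\til{M}, \KK \bigcup \TT_l)$ for one in $\EE(\til{M}, \KK^\prime)$ with the same endpoints (and a comparable "distance from $o$" estimate), and Lemma \ref{contlemma2} then passes from the latter to the hyperbolic geodesic in $\til{N^h}$.

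More precisely, suppose $\beta_{ea1}^h$ is a $C_0$-electro-ambient quasigeodesic without backtracking in $\til M \subset \EE(\til{M}, \KK \bigcup \TT_l)$ joining $\til F(a)$ and $\til F(b)$, and set $\beta_{ea1} = \beta_{ea1}^h \setminus \partial \til M$. Assume $d_M(\beta_{ea1}, \til F(o)) \geq n$. Choose any $C_0$-electro-ambient quasigeodesic without backtracking $\beta_{ea}^h$ in $\EE(\til M, \KK^\prime)$ joining the same pair of points $\til F(a), \til F(b)$, and set $\beta_{ea} = \beta_{ea}^h \setminus \partial \til M$. Applying the converse direction of Lemma \ref{contlemma1} (with $C_0$ fixed and basepoint $\til F(o)$) yields a function $\Theta_1 : \mathbb{N} \to \mathbb{N}$, depending only on $C_0$ and $\til F(o)$, with $\Theta_1(n) \to \infty$ as $n \to \infty$, such that $d_M(\beta_{ea}, \til F(o)) \geq \Theta_1(n)$.

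Now apply Lemma \ref{contlemma2} with the same $C_0$ and $o$: there exists $\Theta_2 : \mathbb{N} \to \mathbb{N}$, with $\Theta_2(m) \to \infty$ as $m \to \infty$, such that $d_M(\beta_{ea}, \til F(o)) \geq m$ implies $d_{\Hyp^3}(\lambda^h_{thick}, o) \geq \Theta_2(m)$. Setting $m = \Theta_1(n)$ gives $d_{\Hyp^3}(\lambda^h_{thick}, o) \geq \Theta_2(\Theta_1(n))$. Defining $\Theta(n) := \Theta_2(\Theta_1(n))$, we have $\Theta(n) \to \infty$ as $n \to \infty$, and the desired implication holds.

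There is essentially no obstacle here: the entire content is packaged in Lemma \ref{contlemma1} and Lemma \ref{contlemma2}. The only bookkeeping point worth emphasizing is that the two electrocutions $\EE(\til M, \KK^\prime)$ and $\EE(\til M, \KK \bigcup \TT_l)$ are merely quasi-isometric, not isometric, so one cannot simply identify $\beta_{ea}$ with $\beta_{ea1}$; Lemma \ref{contlemma1} is exactly the tool that handles the discrepancy between electrocuting extended split components as single objects versus electrocuting split components and $l$-thin Margulis tubes separately.
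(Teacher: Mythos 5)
Your proposal is correct and is essentially identical to the paper's argument: the paper states the corollary with the one-line justification "Combining Lemma \ref{contlemma2} with Lemma \ref{contlemma1}, we have the following," and your proof does precisely this composition, using the converse direction of Lemma \ref{contlemma1} to pass from $\EE(\til M, \KK\bigcup\TT_l)$ to $\EE(\til M, \KK^\prime)$ and then feeding the result into Lemma \ref{contlemma2}.
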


Again, combining Lemma \ref{contlemma2} with  Lemma \ref{contlemma-qd}, we have the following.

\begin{cor}  
Let $N$ be the convex core of a complete hyperbolic $3-$manifold $N^h$ minus a neighborhood
of the cusps. Equip each degenerate end with a split geometry structure such that each split component
is incompressible. Let $M$ be the resulting model of split geometry and $F: N\rightarrow M$ be the bi-Lipschitz
homeomorphism between the two. Let $\til{F}$ be a lift of $F$ to the universal covers. Let $H$ be a Scott core of $N$
and $\{ D_i \}$ a maximal collection of compressing disks in $H$.
Then 
there exists a function $\Theta : \mathbb{N} \rightarrow \mathbb{N}$ satisfying
$\Theta (n) \rightarrow \infty$ as $n \rightarrow \infty$ such that for all  $o \in \til{H}$ the following holds.\\
Let $ D$ be a lift of one of the $D_i$'s to $\til M$ and let $A$  be the quasidisk in $\til M$ constructed
from $D$ as above. For any $a,b \in A$, let $[a,b]_h$ be the hyperbolic geodesic in $\til{ N^h}$ joining
${\til{F}}^{-1}(a), {\til{F}}^{-1}(b)$ and let $[a,b] = [a,b]_h\cap \til{N}$.

Then $d_M(D, \til{F}(o)) \geq n $ implies that $d_{\Hyp^3}([a,b], o) \geq \Theta (n)$.
\label{contlemma-qd1}
\end{cor}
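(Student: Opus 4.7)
The plan is to chain Lemma \ref{contlemma-qd} with Corollary \ref{contlemma3} (the electro-ambient analog of Lemma \ref{contlemma2}), using the graph-metric quasiconvexity of the quasidisk $A$ (Proposition \ref{A-qc}) as the bridge. First, I would apply Lemma \ref{contlemma-qd} directly: the hypothesis $d_M(D, \til{F}(o)) \geq n$ promotes to $d_M(A, \til{F}(o)) \geq \Theta_1(n)$ for some $\Theta_1 : \mathbb{N} \to \mathbb{N}$ with $\Theta_1(n) \to \infty$.

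The crux of the argument is to exhibit a specific electro-ambient $C_0$-quasigeodesic without backtracking $\beta_{ea}^h \subset \til{M} \subset \EE(\til{M}, \KK \cup \TT_l)$ joining $a$ and $b$ whose $d_M$-distance from $A$ is controlled. For this I would use the explicit $d_G$-quasigeodesic supplied by Lemma \ref{dGqgeod0} and Corollary \ref{dGqgeod1}: take the $d_G$-quasigeodesic rays $\rho_a, \rho_b$ descending from $a$ and $b$ along $A$ down to $D$, and concatenate them by a short arc inside $D$. The resulting path $\gamma$ lies entirely in $A$. I would then straighten $\gamma$ to a $d_M$-geodesic segment inside each split component it enters and delete any backtracking loops (surgeries local to individual split components $\gamma$ already meets), producing the desired $\beta_{ea}^h$.

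Next I would estimate $d_M(\beta_{ea}, \til{F}(o))$ from below. The proof of Lemma \ref{contlemma-qd} shows that each vertex $x_k \in \sigma_k \subset A$ satisfies $d_M(x_k, \til{F}(o)) \geq \max(n - c_k, k\epsilon_0)$. The straightening step displaces $\beta_{ea}$ from $\gamma$ at level $k$ by at most the $d_M$-diameter $\delta_k$ of the split component visited at that level, so
\[
d_M(\beta_{ea}, \til{F}(o)) \geq \sup_k \max\bigl(n - c_k - \delta_k,\; k\epsilon_0 - \delta_k\bigr) =: \Theta_2(n).
\]
Applying Corollary \ref{contlemma3} to this $\beta_{ea}^h$ then delivers $d_{\Hyp^3}([a,b], o) \geq \Theta(n)$, completing the chain.

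The main obstacle is the potential unboundedness of individual split components' $d_M$-diameters, which a priori would let $\beta_{ea}$ wander far from $A$ in the ambient hyperbolic metric while remaining $d_G$-close. The saving grace is the uniform $\epsilon_1$-separation of split surfaces guaranteed by Theorem \ref{wsplit}: any $d_M$-displacement $\delta_k$ accrued while straightening inside a split component is outpaced by the linear-in-$k$ depth contribution $k\epsilon_0$, exactly as in the $\max(n - c_k, k\epsilon_0)$ argument driving Lemma \ref{contlemma-qd}. This is why choosing $\beta_{ea}$ explicitly along $A$, rather than invoking quasiconvexity for a generic electro-ambient representative, is essential to the plan.
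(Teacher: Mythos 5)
The core of your plan breaks down at the construction of $\beta_{ea}^h$. Concatenating the two descending $d_G$-quasigeodesic rays $\rho_a$ (from $a$ down to $D$) and $\rho_b$ (from $b$ down to $D$) with a short arc in $D$ does not produce a $d_G$-quasigeodesic from $a$ to $b$, and no amount of local surgery inside individual split components can repair this. If $a$ lies on $\sigma_i$ and $b$ on $\sigma_j$, the concatenated path has $d_G$-length on the order of $i+j$ (descending to level $0$ and climbing back up), whereas $d_G(a,b)$ is at most $|i-j|$ plus a bounded horizontal contribution. Taking $i=j$ large, the concatenation fails to be a $P$-quasigeodesic for any fixed $P$. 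The failure is global backtracking across all the intermediate levels, not the local electro-ambient backtracking (re-entering a single electrocuted piece) that your proposed surgeries would remove. So the hypothesis of Corollary \ref{contlemma3} --- that $\beta_{ea}^h$ be an electro-ambient $C_0$-quasigeodesic without backtracking --- is not met, and the final step of your chain collapses.

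The paper avoids this issue by never attempting to build a single quasigeodesic from $a$ to $b$. Instead it fixes a point $z\in D$ and treats the two descending paths $\beta_a$ (from $a$ to $z$) and $\beta_b$ (from $b$ to $z$) separately; each of these \emph{is} an electro-ambient quasigeodesic, by Lemma \ref{dGqgeod0} and Corollary \ref{dGqgeod1}. Applying Lemma \ref{contlemma2} to each yields control on the hyperbolic geodesics $[a,z]$ and $[b,z]$ in $\til{N^h}$. The missing ingredient that you would need to add is the $\delta$-thinness of hyperbolic triangles in $\mathbb{H}^3$: since the triangle with vertices $a,b,z$ is $\delta$-thin, the geodesic $[a,b]$ lies in a $\delta$-neighborhood of $[a,z]\cup[b,z]$, so $d_{\Hyp^3}([a,b],o)\geq \Theta_1(n)-\delta$. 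Without some such triangle argument relating $[a,b]$ to the two descending rays, there is no way to pass from control on paths inside $A$ to control on $[a,b]$, because both endpoints of $[a,b]$ lie on the same side of the quasidisk.
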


\begin{proof} By  Lemma \ref{contlemma-qd} there exists $z \in D$, a function  
 $\Theta_0 : \mathbb{N} \rightarrow \mathbb{N}$ satisfying
$\Theta_0 (n) \rightarrow \infty$ as $n \rightarrow \infty$ and electro-ambient quasigeodesics
$\beta_a, \beta_b$ in $(\til{M},d_G)$, joining $a$ to $z$ and $b$ to $z$ respectively, such that $d_M(D, \til{F}(o)) \geq n $ implies that 
$d_M(\beta_a \cup \beta_b, \til{F}(o)) \geq  \Theta_0 (n)$. 

Let $[a,z]_h, [b,z]_h$ be the hyperbolic geodesic in $\til N$ joining
${\til{F}}^{-1}(a), {\til{F}}^{-1}(b)$ respectively to ${\til{F}}^{-1}(z)$
and let $[a,z] = [a,z]_h\cap \til{N}$, $[b,z] = [b,z]_h\cap \til{N}$.

Then by  Lemma \ref{contlemma2} there exists a function  
 $\Theta_1 : \mathbb{N} \rightarrow \mathbb{N}$ satisfying
$\Theta_1 (n) \rightarrow \infty$ as $n \rightarrow \infty$ such that  
 $d_M(D, \til{F}(o)) \geq n $ implies that $d_{\Hyp^3}([a,z]\cup [b,z], o) \geq \Theta_1 (n)$.

Let $\delta >0$ be such that all geodesic triangles in ${\mathbb{H}}^3$ are $\delta -$thin.
Taking $\Theta (n)=\Theta_1 (n)-\delta$, it follows that 
$d_M(D, \til{F}(o)) \geq n $ implies that $d_{\Hyp^3}([a,b], o) \geq \Theta (n)$.
\end{proof}

\subsection{Admissible Quasigeodesics} \label{sec-adm}
We shall need a collection of paths consisting of horizontal and vertical segments
approximating electro-ambient quasigeodesics. We shall call these admissible quasigeodesics.
Let $M$ be a model manifold each of whose ends is equipped with a split geometry structure such that all split
components are incompressible. Recall that
each thick block and each split
block in $M$ is  homeomorphic to   a product $\Sigma_i^s \times I$. We fix such a product structure
for each block.
Let $t_i = sup\{ length(\{x\} \times I): x \in \Sigma_i^s\}$ be the {\it thickness} of the $i-$th block.

Recall that $F: N \rightarrow M$ is a bi-Lipschitz homeomorphism from a hyperbolic
manifold $N$ (minus cusps) to $M$ and let $\til F$ denote its lift to the universal cover.

An {\bf elementary  admissible path} in $\til M$ is one of the following: 

\noindent Type 1: A `horizontal' geodesic in the intrinsic path metric on some lift $\til{\Sigma_i^s}$ of a split  surface to $\til M$.\\
Type 2: A `vertical' path of the form $x \times I$ (with respect to the fixed product
structure above) in the lift to   $\til M$ of
either a thick block or a split block. \\

Let $B_i$ be a thick block of $M$ and  let $S_i^s, S_{i+1}^s$ be its horizontal boundary components.
Recall that a product structure $B_i = S \times I$ has been fixed.
Let $\mu = [a,b]$ be a geodesic in $\til{B_i} \subset \til{M}$ such that its end-points 
$a, b$ lie on the horizontal boundary components. Let $P(\mu)$ denote the projection of $\mu$ 
onto the horizontal boundary component ( $S_i^s$ or $ S_{i+1}^s$) containing $a$. If $b$ belongs to
the same  horizontal boundary component  as $a$, define $\mu_{adm} = P(\mu)$. Else define
$\mu_{adm} = P(\mu)\cup \{b\}\times I$, where $\{b\}\times I$ is the elementary vertical path
through $b$. $\mu_{adm}$ will be called the {\bf admissible quasigeodesic} corresponding
to $\mu$ in the thick block  $\til{B_i}$.

Let $\BB$ denote the collection of thick blocks.

\begin{defn} \label{adm-def} Let $\beta_{ea}$ be an  electro-ambient
$C_0-$quasigeodesic ( without backtracking), such that it enters or leaves  split components at split level surfaces. Then 
an {\bf admissible quasigeodesic} $\beta_{adm}$  in $\til M$ corresponding to   $\beta_{ea}$   in $\til M \subset \EE(\til{M}, \KK\bigcup\TT_l )$ is a path such that
 $\beta_{adm} \cap (\til{M} \setminus \bigcup_{K \in \KK} \til{K}
\cup  \bigcup_{B\in \BB} \til{B} ) = \beta_{ea} \cap (\til{M} \setminus 
\bigcup_{K \in \KK} \til{K}\cup  \bigcup_{B\in \BB} \til{B} )$. 

Further, 
for each $ \til{K}$, $\beta_{adm} \cap \til{K}$ is  a union of elementary  admissible paths with
disjoint interiors such that
\begin{enumerate}
\item $\beta_{adm} \cap \til{K}$  has at most one vertical
path of type (2) above.
\item  for any connected horizontal boundary 
component $\til{\Sigma_0^s}$ of  $\til K$,
$\beta \cap \til{\Sigma_0^s}$ has at most one `horizontal' geodesic of type  (1) above.
\end{enumerate}

Finally for each $B\in \BB$, $\beta_{adm} \cap \til{B}$ is the admissible quasigeodesic
corresponding to $\beta_{ea} \cap \til{B}$ in $\til{B}$.
\end{defn}

Definition \ref{adm-def} allows us to replace an electro-ambient
quasigeodesic by a path with greater control. We shall be concerned with electro-ambient quasigeodesics $\beta_{ea}$ in $\til M$ starting and ending at 
points in $\til H$. Any such electro-ambient quasigeodesic has a representative entering and leaving split components at split level surfaces.
This follows from the observation that any electro-ambient quasigeodesic necessarily 
enters and leaves split blocks as well as thick blocks
along (horizontal) split level surfaces except at most 
for maximal pieces that have end-points  in one of the thin tubes in $\TT_l$. Hence the restriction  on
electro-ambient quasigeodesics $\beta_{ea}$ given by (the first sentence of) Definition \ref{adm-def} may be regarded just as  a choice
of a representative of $\beta_{ea}$ or as
a mild  normalization
condition. The construction of the admissible quasigeodesic $\beta_{adm}$ corresponding to $\beta_{ea}$  changes this representative of 
$\beta_{ea}$ by replacing the intersection of $\beta_{ea}$ with each thick block $\til B$ or split component $\til K$ by one horizontal
and at most one vertical piece.
 The reader should thus think of an admissible quasigeodesic $\beta_{adm}$ corresponding to $\beta_{ea}$ as an  
electro-ambient quasigeodesic
\begin{enumerate}
\item with the same entry and exit points as $\beta_{ea}$ into either thick blocks (elements
of $\til B$) or split components (elements of $\til K$).  
\item if the entry and exit points $p,q$ of $\beta_{ea}$ into some $\til B$ (or $\til K$) lie on the same horizontal level, join $p,q$
 by a horizontal geodesic
in the corresponding level surface to obtain the corresponding piece of $\beta_{adm}$.
\item if the entry and exit points $p,q$ of $\beta_{ea}$ into some $\til B$ (or $\til K$) lie on levels $i$ and $i+1$, say, then project $q$
(using the product structure of the block $B$ or $K$) to $P(q)$ on level $i$. Join $p, P(q)$ by a horizontal 
geodesic
in the $i$-th level surface and then follow it by the vertical segment joining $P(q)$ to $q$.
\end{enumerate}
 The choice of an admissible quasigeodesic corresponding to an electro-ambient quasigeodesic is not unique. In each piece of the third type above, we can
 also choose
 the vertical segment  at $p$ and then follow by a horizontal 
geodesic
in the $(i=1)$-th level surface. The ambiguity is bounded however in a sense made precise in Lemma \ref{contlemma4} below.

The next Lemma  allows us to apply the criterion for existence of Cannon-Thurston maps
in Lemma \ref{contlemma} to admissible quasigeodesics in $\til M$
 rather
than electro-ambient quasigeodesics in $\til M$. The proof of
Lemma \ref{contlemma4} is exactly like Lemma 6.5 of  \cite{mahan-split}   and we omit it here
(see also the proof of Lemma \ref{contlemma1} above).

\begin{lemma}   Let $N$ be the convex core of a complete hyperbolic $3-$manifold $N^h$ minus a neighborhood
of the cusps. Equip each degenerate end with a split geometry structure such that each split component
is incompressible. Let $M$ be the resulting model of split geometry.
Then for all $C_0 > 0$, and $o \in \til{M}$
there exists a function $\Theta : \mathbb{N} \rightarrow \mathbb{N}$ satisfying
$\Theta (n) \rightarrow \infty$ as $n \rightarrow \infty$ such that the following holds.\\
For any $a, b \in \til{M}$, let $\beta_{ea}^h $  be an electro-ambient
$C_0-$quasigeodesic without backtracking in $\til M$ joining $a, b$. Let
$\beta_{ea} = \beta_{ea}^h \setminus \partial \til{M}$ be the part of $\beta_{ea}^h$ lying away from
the (bi-Lipschitz) horospherical boundary of $ \til{M}$. Again, let $\beta_{adm}^h $  be the admissible
quasigeodesic corresponding to  $\beta_{ea}^h $ and let 
$\beta_{adm} = \beta_{adm}^h \setminus \partial \til{M}$ be the part of $\beta_{adm}^h$ lying away from
the (bi-Lipschitz) horospherical boundary of $ \til{M}$.

Then $d_M(\beta_{ea}, o) \geq n $ implies that $d_M(\beta_{adm}, o) \geq \Theta (n)$.
Conversely, $d_M(\beta_{adm}, o) \geq n$  implies that $d_M(\beta_{ea}, o) \geq  \Theta (n)$.
\label{contlemma4}
\end{lemma}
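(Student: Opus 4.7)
The plan is to compare $\beta_{ea}$ and $\beta_{adm}$ piece by piece, exploiting the fact that they agree outside thick blocks and split components and share entry/exit points there. By the first sentence of Definition \ref{adm-def}, $\beta_{adm}$ and $\beta_{ea}$ agree on the complement $\til{M} \setminus (\bigcup_{K \in \KK} \til{K} \cup \bigcup_{B \in \BB} \til{B})$, so only the replacement of $\beta_{ea}$ by horizontal-plus-vertical segments inside thick blocks and split components can introduce deviation. The goal is then to show this deviation grows only slowly with the index of the block containing it, while the $d_M$-distance of the block itself from the basepoint $o$ grows linearly by $\epsilon_1$-separation.

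Concretely, I would first establish, for each block index $i$, a constant $C(i)$ such that $\beta_{adm} \cap \til{B_i}$ lies in a $C(i)$-neighborhood of $\beta_{ea} \cap \til{B_i}$ (and vice versa) in $d_M$, where $\til B_i$ is the $i$-th block (or any split component inside the $i$-th split block). For a thick block this is immediate from the uniform bi-Lipschitz product structure $S\times[0,1]$: the admissible piece is the projection of $\beta_{ea}\cap\til{B_i}$ to one boundary component followed by at most one vertical segment, and both endpoints coincide. For a split component $\til{K}\subset\til{B_i}$, the admissible piece consists of at most one vertical segment and a horizontal geodesic on each horizontal boundary, all with endpoints shared with $\beta_{ea}\cap\til{K}$; incompressibility of $\til K$ and hyperbolic quasiconvexity (Lemma \ref{hypqc-a}) together with the finiteness of the geometry of any fixed split component give the desired bound. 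Since only finitely many orbits of blocks up to level $i$ are involved, $C(i)$ is finite for every $i$, exactly as in the $C(i)$ appearing in the proof of Lemma \ref{contlemma1}.

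Next, by the $\epsilon_1$-separation conclusion of Theorem \ref{wsplit}, the split surface $\Sigma_i^s$ is at $d_M$-distance at least $i\epsilon_1$ from $\Sigma_0^s$; letting $D_0 = d_M(o, \Sigma_0^s)+1$, any point $p$ in a block of index $\geq i$ satisfies $d_M(p,o) \geq i\epsilon_1 - D_0$. Assuming $d_M(\beta_{ea},o)\geq n$, for any $p \in \beta_{adm}\cap \til{B_i}$ we therefore obtain $d_M(p,o)\geq \max\!\bigl(n - C(i),\, i\epsilon_1 - D_0\bigr)$. Defining $i^*(n)$ to be the largest $i$ with $C(i)\leq n/2$ and setting $\Theta(n) = \min\bigl(n/2,\, i^*(n)\epsilon_1 - D_0\bigr)$ yields the desired function, since $i^*(n)\to\infty$ as $n\to\infty$. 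The converse direction follows by swapping the roles of $\beta_{ea}$ and $\beta_{adm}$: the replacement procedure is coarsely reversible because both paths enter and exit each block at the same points.

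The main obstacle is the uniform block-wise bound $C(i)$ in $d_M$. The difficulty is that the lift $\til{K}$ of a split component has infinite $d_M$-diameter, so a priori horizontal geodesics on $\til{\Sigma^s}$ joining the entry/exit points of $\beta_{ea}$ could be much longer than the ambient $d_M$-distance between those points. The resolution, as in Lemmas 6.8 and 6.10 of \cite{mahan-split}, is that the entry/exit points of $\beta_{ea}\cap\til{K}$ are determined by $\beta_{ea}$ itself (a $C_0$-quasigeodesic without backtracking), and for each fixed split component the ratio of intrinsic-to-extrinsic distance between any two such points on its boundary is bounded; quantifying this bound over the finitely many $\pi_1(M)$-orbits of blocks up to level $i$ produces $C(i)$. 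Once this is in place, the combinatorial max argument above proves both implications simultaneously.
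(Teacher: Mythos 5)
Your proof is correct and follows the same approach the paper intends: the paper explicitly omits the argument, referring to Lemma 6.5 of \cite{mahan-split} and to the proof of Lemma \ref{contlemma1}, and your scheme --- a block-wise deviation bound $C(i)$ between $\beta_{ea}$ and $\beta_{adm}$, combined with the $\epsilon_1$-separation of split surfaces from Theorem \ref{wsplit} and a $\max$/$\min$ over block indices to produce a proper $\Theta$ --- is exactly the structure of Lemma \ref{contlemma1}'s proof. Your identification of the obstacle (that $\til K$ has infinite $d_M$-diameter, so the intrinsic horizontal geodesic must be compared to the ambient geodesic) and its resolution via incompressibility and hyperbolic quasiconvexity of split components (Lemma \ref{hypqc-a}) is precisely the content supplied by Lemmas 6.8 and 6.10 of \cite{mahan-split}.
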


\subsection{Cannon-Thurston Maps for Free Groups}
We are now in a position to prove the existence of Cannon-Thurston maps for arbitrary finitely generated
Kleinian groups. In this subsection we shall describe the proof for handlebody groups where the book-keeping is minimal.
In the next subsection we shall indicate the modifications necessary for arbitrary finitely generated
Kleinian groups.
We identify $H$ with its bi-Lipschitz image $F(H)$ contained in $M$ under the bi-Lipschitz homeomorphism $F: N\rightarrow M$ from the hyperbolic manifold
$N$ to the model manifold $M$.

We now want to show that if $\lambda = [a,b]$ is a geodesic in the
intrinsic metric on $\til{H}$ joining $a, b \in \til{H}$, and lying
outside a large ball about a fixed reference point $p \in \til{H}
\subset \til{M}$, then  the (bi-Lipschitz) hyperbolic geodesic
$\lambda^h$ joining $a, b \in \til{M}$ also lies outside a large ball
about $p$ in $\til{M}$. This would guarantee the existence of a
Cannon-Thurston Map by Lemma \ref{contlemma}.

To fix notation, 
let $Q$ be a free Kleinian group without parabolics. Let $N = {{\mathbb{H}}^3}/Q$, $M=F(N)$
and $H$ a compact (Scott) core of $M$. $\til{H}$ with its intrinsic
metric is quasi-isometric to the Cayley graph $\Gamma_Q$ and so its
intrinsic boundary may be identified with the Cantor set $\partial Q$
thought of as the Gromov boundary of $\Gamma_Q$. Let $\hhat{H}$ and
$\hhat{M}$ denote the compactifications by adjoining $\partial Q$ and
the limit set $\Lambda_Q$ to $\til{H}$ and $\til{M}$  respectively.

\begin{theorem} {\bf Cannon-Thurston for Free Groups} The inclusion
  $i: \til{H} \rightarrow \til{M}$ extends continuously to a map
  $\hat{i}: \hhat{H} \rightarrow \hhat{M}$. 
\label{ct-free}
\end{theorem}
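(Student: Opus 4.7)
The plan is to verify the criterion of Lemma \ref{contlemma} for the inclusion $i\colon\til H\hookrightarrow\til M$, where $\til H$ carries the intrinsic path metric and $\til M$ carries the hyperbolic metric. Given a geodesic segment $\lambda=[a,b]$ in $\til H$ lying outside $B_n(p)$, I must find $f(n)\to\infty$ such that the hyperbolic geodesic $\lambda^h$ in $\til M$ from $a$ to $b$ avoids $B_{f(n)}(p)$. Since $Q$ is free without parabolics there are no cusps, so the chain of Lemmas \ref{contlemma4}, \ref{contlemma1} and \ref{contlemma2} converts this into the equivalent statement for an admissible quasigeodesic $\beta_{adm}$ corresponding to an electro-ambient $C_0$-quasigeodesic $\beta_{ea}$ in $(\til M,d_G)$ joining $a$ to $b$; it then suffices to push $\beta_{adm}$ away from $p$ in $d_M$.

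First I would fix essential compressing disks $D_1,\dots,D_g\subset H$ whose union cuts $H$ into a single ball $B$. Their lifts $\{\til D_\alpha\}$ decompose $\til H$ into copies of $\til B$ of uniformly bounded intrinsic diameter $c$, with combinatorics dual to the Bass--Serre tree of the rank-$g$ free splitting of $Q$. A geodesic $\lambda$ in $\til H$ avoiding $B_n(p)$ must therefore cross an ordered sequence of lifted disks $\til D_1,\dots,\til D_N$, each at intrinsic distance at least $n-c$ from $p$. For each crossed disk $\til D_j$ form the quasidisk $\til A_j\subset\til M$ constructed in Section \ref{qd}; by Proposition \ref{A-qc} these are uniformly $C_0$-quasiconvex in $(\til M,d_G)$, and Lemma \ref{contlemma-qd} together with Corollary \ref{contlemma-qd1} provides the conversion: when the base disk is forced far from $p$, the hyperbolic geodesic between any two points of the associated quasidisk is forced far from $p$ as well.

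The structural heart of the argument is the assertion that $\beta_{adm}$ must cross each $\til A_j$ in the same cyclic order in which $\lambda$ crosses $\til D_j$, and each such crossing point lies on $\til A_j$. Here the rigidity imposed by Definition \ref{adm-def} is essential: every piece of $\beta_{adm}$ is either a horizontal geodesic on a single split level surface $\til\Sigma_k$ or a short vertical segment in a block, while $\til A_j\cap\til\Sigma_k$ is a single lifted simple curve $\til\sigma_{j,k}$ that separates $\til\Sigma_k$. On the base surface $\til\Sigma_0=\til S\subset\partial\til H$ the curve $\til\sigma_{j,0}=\partial\til D_j$ literally separates $a$ from $b$ in exactly the same way that $\til D_j$ separates them in $\til H$. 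Hence no horizontal piece of $\beta_{adm}$ can switch sides of $\til A_j$ without intersecting some $\til\sigma_{j,k_j}$, yielding a crossing point $x_j\in\til A_j$. Using hyperbolicity of $(\til M,d_G)$ (Proposition \ref{graph_metric_hyperbolic}) to keep the arcs of $\beta_{adm}$ between successive crossings close in $d_G$ to $\til A_j\cup\til A_{j+1}$, and applying Corollary \ref{contlemma-qd1} at each $x_j$, one obtains $d_M(\beta_{adm},p)\geq\Theta(n-c)$, which is the desired $f(n)$.

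The main obstacle is precisely this coarse separation step. In the continuous bounded-geometry constructions of Miyachi \cite{miyachi-ct} and Souto \cite{souto-ct} each $A_i$ genuinely cuts $\til M$ into two pieces, so topological forcing does all the work; in our coarse split-geometry setting the quasidisks only separate the individual split level surfaces they meet, not $\til M$ itself. Promoting level-by-level separation into a forced-crossing statement is impossible for arbitrary quasigeodesics, and is possible for admissible ones only because of the horizontal-on-a-level-surface/bounded-vertical-in-a-block constraints of Definition \ref{adm-def}; without this rigidity an electro-ambient quasigeodesic could slip through a split component and short-circuit a crossing, ruining the passage from the intrinsic bound on $\til H$ to a bound in $\til M$.
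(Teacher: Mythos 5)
Your overall scaffolding (reduce via Lemmas~\ref{contlemma}, \ref{contlemma4}, \ref{contlemma1}, \ref{contlemma2} to admissible quasigeodesics; use the quasidisks of Section~\ref{qd} and their $d_G$-quasiconvexity from Proposition~\ref{A-qc}; exploit level-by-level separation because the curves $\til\sigma_{j,k}$ separate $\til\Sigma_k$) is the right framework and matches the paper. But the structural heart of your argument departs from the paper's in a way that introduces a genuine gap.

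You organize the argument around the lifted disks that $\lambda$ \emph{crosses}, and claim that $\beta_{adm}$ must cross the corresponding quasidisks $\til A_1,\dots,\til A_N$ in order, then try to string together bounds across consecutive crossings. Two problems. First, the paper's key disk is not one crossed by $\lambda$ at all: it is a lift $D$ that \emph{separates} $\lambda$ from the basepoint $p$ inside $\til H$ (so that $\lambda$ lies entirely in the component of $\til H\setminus D$ not containing $p$, and $D$ is far from $p$). This disk is always available once $\lambda$ exits $B_n(p)$, whereas $\lambda$ may cross no disks at all if it stays in a single complementary region of bounded diameter; your argument is vacuous in that case, but the separating disk still does its job. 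Second, the tool you invoke to propagate the bound between consecutive crossing points $x_j\in\til A_j$ and $x_{j+1}\in\til A_{j+1}$ --- Corollary~\ref{contlemma-qd1} --- applies to two points lying on the \emph{same} quasidisk $A$. It controls the geodesic $[r,q]$ when both $r$ and $q$ lie on $A$, because that geodesic is forced (via Lemma~\ref{contlemma-qd} and Lemma~\ref{contlemma2}) to stay in the graph-quasiconvex hull of a single $A$. There is no analogous statement in the paper for a geodesic joining points on two distinct quasidisks, and hyperbolicity of $(\til M,d_G)$ alone does not keep the arc of $\beta_{adm}$ close to $\til A_j\cup\til A_{j+1}$ (that union is not asserted to be $d_G$-quasiconvex).

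The paper's actual proof uses exactly one separating disk $D$ with associated quasidisk $A$, and then a clean dichotomy: either $\beta$ stays on $\lambda$'s side of $A$ in the level-surface sense (Case~(a)), in which case the horizontal segments are coarsely separated from $p$ by $A$ and the vertical segments are controlled by block thickness plus $\epsilon_0$-separation; or $\beta$ crosses $A$ (Case~(b)), in which case one takes the \emph{first and last} crossings $r,q$, applies Corollary~\ref{contlemma-qd1} to the pair $r,q\in A$ (same quasidisk!) for the middle arc, handles the outer arcs by the Case~(a) argument, and closes with $\delta$-thinness of triangles. Reorganizing your argument around this single separating disk, replacing the multi-disk crossing sequence by the Case~(a)/Case~(b) dichotomy, and reserving Corollary~\ref{contlemma-qd1} for the pair of crossing points on the same $A$, would repair the gap.
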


\begin{proof} Let $p \in  \til{H}$ be a base-point, and
 $\lambda  = [a,b]$ be a geodesic in the
intrinsic metric on $\til{H}$ and 
$\lambda^h$ be the (bi-Lipschitz) hyperbolic geodesic
 joining its end-points in $ \til{M}$.  By Lemma \ref{contlemma} it suffices to show
that if $\lambda$ lies outside a large ball about $p$ in $\til{H}$,
then $\lambda^h$ lies outside a large ball about $p$ in
$\til{M}$.

Suppose that $\lambda$  lies outside an $n$-ball about $p$ in $\til H$, i.e. $d_{\til{H}}(\lambda, p) \geq n$.
Let $\{ D_i \}$ be a finite collection of compressing disks in $H$ such that each component
of $\partial H \setminus \bigcup_i \partial D_i$ is a pair of pants.

Since each (lift of) $D_i$ separates $\til{H}$ and since $\lambda$ lies
outside a large $n-$ball about $p$ in $\til{H}$, we conclude that there
exists such a lift $D$ lying outside an $m=m(n)$- ball about $p$ in $\til{M}$, (i.e. $d_{\til{M}}(D , p) \geq m(n)$) and
that $\lambda$ lies in the component of $\til{H}\setminus D$ not containing $p$, where
$m(n) \rightarrow \infty$ as $n \rightarrow \infty$. Let $A=D\cup\bigcup_i \sigma_i$ be the quasidisk containing $D$
constructed in Section \ref{qd},
where $\sigma_i$ is a closed curve on the lift $\til{\Sigma_i^s}$ of the $i-$th split surface $\Sigma_i^s$
to $\til M$. Also, let $\til{\Sigma_i^s} \setminus \sigma_i = {\til{\Sigma_i^s}}_+ \cup {\til{\Sigma_i^s}}_-$
where $ {\til{\Sigma_i^s}}_+ , {\til{\Sigma_i^s}}_-$ are the two components
of $\til{\Sigma_i^s} \setminus \sigma_i$. Similarly, let $\til{H}\setminus D={\til{H}}_+\cup{\til{H}}_-$,
where $ {\til{\Sigma_0^s}}_+\subset \partial {\til{H}}_+$ and
$ {\til{\Sigma_0^s}}_-\subset \partial {\til{H}}_-$. Assume without loss of generality that
$\lambda \subset {\til{H}}_+$ and $p \in {\til{H}}_-$. Let $ {\til{M}}_+= {\til{H}}_+\cup
\bigcup_i {\til{\Sigma_i^s}}_+$
and $ {\til{M}}_-= {\til{H}}_-\cup \bigcup_i {\til{\Sigma_i^s}}_-$.  Also let ${\til{M}}_H=
 \bigcup_i {\til{\Sigma_i^s}}$ be the union of all the horizontal split surfaces lifted to $\til M$.

Let $\alpha$ be an electro-ambient quasigeodesic joining the end-points of  $\lambda$
 in $ \til{M}$ and $\beta$ be an admissible  quasigeodesic corresponding to $\alpha$.

Since $\beta$ is admissible, it consists of horizontal and vertical pieces.
Two cases arise:\\
a)  $\beta\cap {\til{M}}_H \subset {\til{M}}_+$\\
b) $\beta\cap ({{\til{M}}_+})^-\cap  ({{\til{M}}_-})^- \neq \emptyset$.

Roughly speaking Cases (a) and (b) correspond respectively
 to the cases where $\beta$ does not or does intersect $A$ coarsely.

\noindent {\bf Case a:}  $\beta\cap {\til{M}}_H \subset {\til{M}}_+$\\
By Corollary \ref{contlemma3} and Lemma \ref{contlemma4} it suffices to show that
there exists a function $\Theta : \mathbb{N} \rightarrow \mathbb{N}$ satisfying
$\Theta (n) \rightarrow \infty$ as $n \rightarrow \infty$ such that the following holds.\\
$d_{\til{H}}(\lambda , p) \geq n$ implies that $d_M(\beta, p) \geq \Theta (n)$.

The existence of such a function $\Theta : \mathbb{N} \rightarrow \mathbb{N}$ follows exactly as in Lemma 
\ref{contlemma-qd}. We briefly recount the proof. It suffices to prove that if 
$d_{\til{M}}(D , p) \geq m(n)$, then  $d_{\til{M}}(A , p) \geq \Theta(n)$. As in the proof of  Lemma 
\ref{contlemma-qd}, there exist constants $b_1, \cdots, b_k, \cdots$ (depending only on the geometry of $E$ and the split geometry model)
and a $z \in D$
such that for any $x_k \in \sigma_k (\subset A)$, we have $d_M(x_k,z) \leq b_1 + \cdots + b_k = c_k$ (say). Hence $d_M(x_k,p) \geq m(n) - c_k$.
Also, using uniform separatedness, $d_M(x_k,z) \geq k \epsilon_0$.
Therefore there exists a proper function $\Theta_0 : \mathbb{N} \rightarrow \mathbb{N}$ such that $d_M(A,p) \geq \Theta_0(n)$.
Since $\beta$ is disjoint from $A$ and is admissible, it follows that every horizontal segment of $\beta$  is at distance at least $\Theta_0(n)$ from $p$.

Any vertical segment $\tau_k$ of $\beta$ necessarily joins a pair of points $y_k, y_{k+1}$, where $y_k, y_{k+1}$ lie on
the top and bottom horizontal boundaries of the same block (thick or split) with $y_k$ (resp. 
$y_{k+1}$) lying on the same horizontal split level surface as $\sigma_k$  (resp. 
$\sigma_{k+1}$). Further, the vertical segment has length at most $C_k$ -- the thickness
of the  $k-$th block. It follows that any point on $\tau_k$ lies at 
distance at least $\Theta_0(n) - C_k$ from $p$. Again, there exists $\epsilon_0$ (a lower bound on the separation between successive horizontal levels)
such that any point on $\tau_k$ lies at 
distance at least $k\epsilon_0$ from $p$. Choosing $\Theta(n) = max\{ \Theta_0(n) - C_k,  k\epsilon_0  \} $, we are done.

\noindent {\bf Case b:}  $\beta\cap ({{\til{M}}_+})^-\cap  ({{\til{M}}_-})^- \neq \emptyset$.\\
We shall say that $\beta$ crosses $A$ at $x$ if either $x \in \beta \cap A$, or if there exists
a vertical elementary admissible subpath $x \times I \subset \beta$, such that 
either $(x,0) \in {\til{M}}_+$ and
$(x,1) \in {\til{M}}_-$ or $(x,1) \in {\til{M}}_+$ and
$(x,0) \in {\til{M}}_-$.

Let $r, q$ be the first and last points at which $\beta$ crosses $A$.  Let $\beta_{ar},
\beta_{qb}$ be the subpaths of $\beta$ joining $a, r$ and $b, q$ respectively.
Then again, as in Case (a) above, there exists a function $\Theta : \mathbb{N} \rightarrow \mathbb{N}$ satisfying
$\Theta (n) \rightarrow \infty$ as $n \rightarrow \infty$ such that 
$d_{\til{H}}(\lambda , p) \geq n$ implies that $d_M(\beta_{ar}\cup
\beta_{qb}, p) \geq \Theta (n)$. Hence by Corollary \ref{contlemma3} and Lemma \ref{contlemma4}
it follows that  there exists a function $\Theta_2 : \mathbb{N} \rightarrow \mathbb{N}$ satisfying
$\Theta_2 (n) \rightarrow \infty$ as $n \rightarrow \infty$ such that
$d_{\til{H}}(\lambda , p) \geq n$ implies that $d_M(\mu_{ar}\cup
\mu_{qb}, p) \geq \Theta_2 (n)$, where $\mu_{ar}$ (resp. $\mu_{qb}$) are the (bi-Lipschitz)
hyperbolic geodesics in $\til M$ joining $a, r$ and $b, q$ respectively.

If $r, q \in A$, then, since $d_M(D,p) \geq m(n)$,  Corollary \ref{contlemma-qd1} ensures the existence of a proper
 function $\Theta_3 : \mathbb{N} \rightarrow \mathbb{N}$ 
(i.e. $\Theta_3 (n) \rightarrow \infty$ as $n \rightarrow \infty$) such that
$d_{\til{H}}(\lambda , p) \geq n$ implies that $d_M(\mu_{rq}, p) \geq \Theta_3 (n)$, where $\mu_{rq}$
is the (bi-Lipschitz)
hyperbolic geodesic in $\til M$ joining $ r$ and $ q$. Hence by $\delta -$hyperbolicity
of $\til M$,  $d_M(\mu_{ab}, p) \geq min(\Theta_2 (n), \Theta_3 (n)) - 2 \delta$ and we are done.

Else, let $r \in \til{\Sigma_i^s}$ and $q \in \til{\Sigma_j^s}$. There exists $r'\in \til{\Sigma_{i+1}^s}$, such that $r,r'$ lie on a vertical
segment in $\beta$. Further they lie on opposite ($+$ and $-$) sides of $A$. Similarly, there 
exists $s'\in \til{\Sigma_{j-1}^s}$, such that $s,s'$ lie on a vertical
segment in $\beta$ and on opposite ($-$ and $+$) sides of $A$.  Hence, (depending on thickness of the $m$-th block)
there exist $C(m), m \in \mathbb{N}$
such that $d_M(r, A) \leq C(i)$ and $d_M(q, A) \leq C(j)$. Choose $r_1,
q_1$ in $\sigma_i, \sigma_j$ respectively such that $d_M(r, r_1) \leq C(i)$ and $d_M(q, q_1) \leq C(j)$.

Then, by Corollary \ref{contlemma-qd1},
 there exists a function $\Theta_4 : \mathbb{N} \rightarrow \mathbb{N}$ satisfying
$\Theta_4 (n) \rightarrow \infty$ as $n \rightarrow \infty$ such that 
$d_M(\mu_{r_1q_1}, p) \geq \Theta_4 (n)$, where $\mu_{r_1q_1}$
is the (bi-Lipschitz)
hyperbolic geodesic in $\til M$ joining $ r_1$ and $ q_1$. The existence of $C(m),, m \in \mathbb{N}$ now guarantees the 
existence of a proper function $\Theta_5 : \mathbb{N} \rightarrow \mathbb{N}$ such that 
$d_M(\mu_{rq}, p) \geq \Theta_5 (n)$ (reprising, for instance, the argument in the last paragraph of Case (a) above). Hence by $\delta -$hyperbolicity
of $\til M$ again,  $d_M(\mu_{ab}, p) \geq min(\Theta_2 (n), \Theta_5 (n)) - 2 \delta$ and we are through.
\end{proof}

\subsection{Finitely Generated Kleinian Groups}
In this subsection, we indicate the modifications necessary in the proof of  Theorem \ref{ct-free}
 to prove the analogous theorem for finitely
generated Kleinian groups.

Let $N_{gf}$ denote the augmented Scott core of  $N^h
= {\Hyp}^3 / G$. Let $i: N_{gf} \rightarrow
N^h$ be the natural inclusion map. Thurston showed that there exists a geometrically finite manifold admitting a strictly type-preserving
 homotopy equivalence with $N^h$. The convex core of such a manifold admits a proper homeomorphism to $N_{gf}$. Thus,  $N_{gf}$ may be thought of as
 the  convex core of  a geometrically finite manifold admitting a strictly type-preserving
 homotopy equivalence with $N^h$.
  Let $H$ be $N_{gf}$ with open neighborhoods of cusps removed.
 Let $N$ be $N^h$ with open neighborhoods of cusps removed. Then $\til H$ is strongly hyperbolic relative to its horospheres and $\til N$ 
is strongly hyperbolic relative to its horospheres. Let $\widehat{H}$ and $\widehat{N}$ denote their relative hyperbolic compactifications. Note that 
$\widehat{H}=\widehat{N_{gf}}$, where $\widehat{N_{gf}}$ is the Gromov compactification of the hyperbolic space $\widetilde{N_{gf}}$. Similarly,
$\widehat{N}=\widehat{N^h}$, where $\widehat{N^h}$ is the Gromov compactification of the hyperbolic space $\widetilde{N^h}$.
 Let $\til{i}: \til{H} \rightarrow \til{N}$ indicate
the lift of $i$. Let $M$ denote the model manifold for $N$ and let $\widehat{M}$ denote the relative hyperbolic
compactification of $\til M$. We identify $H$ with its bi-Lipschitz image in $M$ under the bi-Lipschitz homeomorphism
$F:N\rightarrow M$. Let $d_G$ be the graph metric on $\til M$ equipped with a split geometry structure where all split components are incompressible.

First, suppose that
 $H$ has incompressible boundary as a pared manifold. Then Theorem \ref{ptpre-inde} shows that a Cannon-Thurston map exists for $\til{i} : \til{H} \rightarrow \til{M}$. The point pre-image
description is also furnished by Theorem \ref{ptpre-inde}.

Else $H$ may be decomposed as the disk-connected  sum (or boundary-connected sum) $\#_{i=1, \cdots, r+s} H_i$ of
$H_1$, $H_2 \cdots  H_{r+s}$, where 
\begin{enumerate}
\item $H_{r+1}, \cdots , H_{r+s}$ are pared manifolds with incompressible boundary.
\item $H_1, \cdots, H_r$ are handlebodies such that  in the boundary connected sum decomposition of $H$, no two are connected
to the same boundary component of $\#_{i=r+1, \cdots, r+s} H_i$. (This ensures that the boundary connected sum decomposition is minimal.)
\end{enumerate}
Let $D'_{1}, \cdots , D'_{r+s-1}$ be the compressing disks obtained from the above boundary connected sum decomposition.
Next for each handlebody $H_i$ ($i= 1, \cdots, r$), we choose a minimal collection of disjoint non-separating compressing
disks such that their complement in
 $H_i$  is a ball. Taking the union of $D'_{1}, \cdots , D'_{r+s-1}$ with all these compressing disks for $H_i$ ($i= 1, \cdots, r$),
we obtain a collection
 $D_i, i = 1 \cdots m$ of  compressing disks.   Since
 $m \geq 1$, there is at least one compressing disk. Note that the collection  $\partial D_i$, $i = 1 \cdots m$ forms a maximal
collection of homotopically distinct compressible simple closed curves on $\partial H$.

\begin{theorem} {\bf Cannon-Thurston for Kleinian Groups} Let $H, M, N, N_{gf}, N^h$ be as above. 
Further suppose that each degenerate end of $M$ admits a Minsky model.
The inclusion
  $\til{i}: \til{H} \rightarrow \til{M}$ extends continuously to a map
  $\hat{i}: \hhat{H} \rightarrow \hhat{M}$   between the relative hyperbolic compactifications. Equivalently, 
 $\til{i}: \til{N_{gf}} \rightarrow \til{N^h}$ extends continuously to a map
 $\hat{i}: \hhat{N_{gf}} \rightarrow \hhat{N^h}$   between the  hyperbolic compactifications. 
\label{ct}
\end{theorem}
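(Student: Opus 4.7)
The plan is to adapt the argument of Theorem \ref{ct-free} to the strictly relative hyperbolic setting, replacing the extension criterion of Lemma \ref{contlemma} by its relative version Lemma \ref{crit-relhyp}. If $H$ has incompressible pared boundary, the theorem reduces to Theorem \ref{ptpre-inde} directly, so I may assume $H$ admits at least one compressing disk and work with the maximal system $D_1, \ldots , D_m$ of compressing disks coming from the disk-connected sum decomposition described just above the statement.

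First I would set up the geometry. By the Standing Assumption and Theorem \ref{minsky-split}, each degenerate end of $M$ admits a split geometry model. Proposition \ref{incompressible} allows us (after pushing deep enough into each end) to assume every split component is incompressible in $M$, whence Proposition \ref{gr-qc-free} applies and $(\til M, d_G)$ is hyperbolic by Proposition \ref{graph_metric_hyperbolic}. To accommodate parabolics I would partially electrocute each cusp as in Lemma \ref{pel}; the quasiconvexity arguments in the graph metric persist after this modification. For each $D_i$ I construct the quasidisk $A_i = D_i \cup \bigcup_k \sigma_{ik}$ exactly as in Section \ref{qd}, flowing $\partial D_i$ out each degenerate end through the split level surfaces. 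Proposition \ref{A-qc} then gives a uniformly $d_G$-quasiconvex family $\{A_i\}$.

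Next I would verify the criterion of Lemma \ref{crit-relhyp}. Let $\hat\lambda$ be an electric quasigeodesic without backtracking in $\EE(\til H, \HH_H)$ whose part $\lambda^b$ away from horospheres lies outside the $n$-ball about a fixed basepoint $p \in \til H$. Since $\til H$ is simply connected, any embedded disk in it separates, so some lift $D$ of one of the $D_i$'s separates $\til H$ into two components with $\lambda^b$ on the side away from $p$, and with $d_{\til M}(D,p) \geq m(n)$ for a proper function $m$. Let $A \supset D$ be the associated quasidisk. Choose an electro-ambient quasigeodesic $\alpha$ in $\EE(\til M, \KK \cup \TT_l)$ joining the endpoints of $\hat \lambda$ (viewed as sitting in $\hat M$), and let $\beta$ be an admissible quasigeodesic in $\til M$ corresponding to $\alpha$. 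The same case split as in Theorem \ref{ct-free} now applies: in Case (a), where $\beta$ does not cross $A$, every horizontal and vertical piece of $\beta$ is pushed away from $p$ as in Lemma \ref{contlemma-qd}; in Case (b), the outer pieces are handled by Corollary \ref{contlemma3} combined with Lemma \ref{contlemma4}, the middle piece between the first and last crossings is handled by Corollary \ref{contlemma-qd1}, and the three estimates are combined using $\delta$-hyperbolicity of $(\til M, d_G)$.

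The main obstacle will be the bookkeeping around cusps: one must verify that the quasidisks $A_i$ interact coherently with the partially electrocuted horospherical boundary, so that ``outside a large ball modulo horoballs'' in $\til H$ transfers to the same condition in $\til M$. Concretely, the flow of $\partial D_i$ out each degenerate end may pass near $\Z$-cusps and must remain quasiconvex in the partially electrocuted graph metric; and the separation property of each lift $D$ must survive the passage from $\til H$ to $\til M$. Once this is checked (using that the $D_i$ are chosen disjoint from cusps, that partial electrocution preserves hyperbolicity of $\til M$ by Lemma \ref{pel}, and that Proposition \ref{A-qc} is robust under partial electrocution), the chain of properness functions produced in Cases (a) and (b) assembles into a single proper function witnessing the hypothesis of Lemma \ref{crit-relhyp}, yielding the desired continuous extension $\hat i: \hhat{H} \rightarrow \hhat{M}$.
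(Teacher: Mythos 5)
Your proposal follows the template of Theorem~\ref{ct-free} too literally, and the crucial step where you appeal to a separating disk breaks down in the general case. You assert that, since $\lambda^b$ lies outside the $n$-ball about $p$ in $\til H$, some lift $D$ of one of the $D_i$'s separates $\til H$ into two components with $\lambda^b$ on the side away from $p$ and $d_{\til M}(D,p) \geq m(n)$. In Theorem~\ref{ct-free} this is automatic because $H$ is a handlebody and the chosen disks cut $\til H$ into balls of uniformly bounded diameter, so escaping a large ball forces a crossing of a far-away lift. But here $H = \#_i H_i$ where the pieces $H_{r+1}, \ldots, H_{r+s}$ have incompressible pared boundary and are not cut further by the $D_i$'s. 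Their lifts are unbounded components of $\til H \setminus \bigcup_i D_i$. A geodesic $\lambda$ (or the portion near $p$) can escape every compact set while remaining in the same component $\til{H_j}$ as $p$; in that case no lift of any $D_i$ separates $\lambda^b$ from $p$, and the quasidisk machinery gives you nothing.

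This is precisely the situation the paper's proof handles with its Case B: one decomposes $\lambda$ into (at most) three pieces $\lambda_-$, $\lambda_0$, $\lambda_+$ with $\lambda_0$ contained in a single factor $\til{H_i}$, shifts the basepoint into that factor, and then invokes the already-established Cannon-Thurston maps for the factor itself — Theorem~\ref{ct-free} when $H_i$ is a handlebody, and Theorem~\ref{ptpre-inde} when $H_i$ is a pared manifold with incompressible boundary. That use of Theorem~\ref{ptpre-inde} as an ingredient for the middle piece is the genuinely new step relative to the free-group case, and your proposal omits it. (The outer pieces $\lambda_\pm$ are indeed controlled by the quasidisk argument and Corollary~\ref{contlemma-qd1}, as you say, once they are actually separated from the basepoint by a far-away disk; and you must also account for the technicality noted in the paper that $\lambda_\pm$ may re-enter $\til{H_i}$ by a bounded amount near the disk.) Without this decomposition and the appeal to Theorems~\ref{ct-free} and~\ref{ptpre-inde} on the factors, your properness function cannot be constructed in Case~B, so the argument as written does not yield the hypothesis of Lemma~\ref{crit-relhyp}.

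Your remarks about the bookkeeping around cusps and the coherence of partial electrocution with the quasidisks are sensible, but they are secondary to the structural gap above: the mixed free decomposition, not the presence of parabolics, is what forces the extra step.
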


\noindent {\bf Proof:} From  Propositions
\ref{incompressible}, \ref{graph_metric_hyperbolic} and \ref{A-qc}, we
can construct quasidisks $A_i$ corresponding to $D_i$ as before and
lift them to $\til{M}$ (after partially electrocuting $\mathbb{Z}$-cusps if any).

Now, let $\lambda$ be a geodesic segment in $\til{H}$ lying outside a
large ball $B_N(p)$ for a fixed reference point $p$. $\lambda$ may be
decomposed into (at most) three pieces $\lambda_-$, $\lambda_0$ and
$\lambda_+$ as follows. \\
\begin{enumerate}
\item The middle piece
$\lambda_0$ does not intersect any of the (lifts of the) compressing
disks $D_i$ in the interior. (We thus allow for the cases where
$\lambda_-$ and/or $\lambda_+$ are empty.) 
\item  
the common end-point  $\lambda_- \cap \lambda_0$ lies on some
$D_i$. The same is demanded of $\lambda_0 \cap \lambda_+$. 
\item the point $q$ on $\lambda$ nearest to $p$ lies on $\lambda_0$. 
\item If $\lambda$ intersects exactly one disk $D$, then $\lambda_0$ is defined to be
the piece of $\lambda$ ending on $D$ and containing $q$; the other piece being designated $\lambda_-$ or $\lambda_+$.
\end{enumerate}

We shall consider a sequence of $\lambda$'s converging to a point $\xi$ on the boundary of $\hhat{H}$ (or $\hhat{N_{gf}}$)
in the Hausdorff topology. (Equivalently, the sequence of end-points of the $\lambda$'s converge to $\xi$.)
Such a sequence must necessarily lie outside larger
 and larger balls $B_n(p)$ about $p$.
 Two cases arise.

{\bf Case A:}  For a sequence of $\lambda$'s lying outside larger
 and larger balls $B_n(p)$ about $p$, and converging to $\xi$, the sequence of $\lambda$'s
eventually lies outside any fixed component of $\til{H} \setminus \cup_i D_i$.
 This is exactly the same case as in the proof of Theorem
 \ref{ct-free}.
The same proof goes through by Corollary \ref{contlemma-qd1}.

{\bf Case B:} Either $[p,q]$ does not intersect any $D_i$ or 
there is (up to subsequencing) a fixed disk $D_i$ that is the
last disk that $[p,q]$ intersects. Since $D_i$ is of uniformly bounded
diameter, we may shift our base point to a point $p'$
in the component $\til{H_i}$ which is the lift of $H_i$  containing
$q$. In this case, there exists a fixed $n_0$ such that $\lambda$ lies
outside $B_{(n-n_0)}(p')$. By shifting origin, we rewrite $p'$ as $p$
  and 
$(n-n_0)$ as $n$.

\noindent{\bf Step 1:}
Now, $\lambda_0 \subset \til{H_i} \subset \til{H}$  as it does not meet any
disk $D_i$ in its interior. Since $H_i$ is either a  handlebody
without parabolics, or a  pared manifold with incompressible boundary, then by
Theorem \ref{ct-free} or Theorem \ref{ptpre-inde}
 respectively, a Cannon-Thurston map exists for the inclusion  $\til{H_i} \subset \til{M}$. By (the necessity part of)  Lemma \ref{contlemma} and Lemma \ref{crit-relhyp}, it follows that   the
hyperbolic geodesic $\lambda_0^h$ joining the end-points of
$\lambda_0$ in $\til{N^h}$ lies outside a large ball about $p$. Thus,
there exists $m_1(n) \rightarrow \infty$ as $n \rightarrow \infty$
such that $\lambda_0^h$ lies outside a ball of radius $m_1(n)$ about
$p$ in $\til{N^h}$. 

\medskip

\noindent{\bf Step 2:} If $\lambda_+$ (or $\lambda_-$) is non-empty,
then $\lambda_-$ (or $\lambda_+$) is separated from $p$ by a disk $D_i
\subset \til{H}$ lying outside $B_n(p)$. \\

\medskip

\noindent {\bf Note:} Strictly speaking some uniformly bounded pieces of $\lambda_+$ (or $\lambda_-$) close to the intersection point 
with $\lambda$ could enter $\til{H_i}$.
To see the existence of  a uniform bound, 
note first that $\til H$ is quasi-isometric to (the Cayley graph of) $\pi_1(M) (= \pi_1(H))$. Then the disks $D_i$ correspond to splittings of the group $\pi_1(H)$. Geodesic words in $\pi_1(H)$ cannot `backtrack', i.e. in the tree of
spaces description of $\til H$ corresponding to splittings by $D_i$, such paths cannot re-enter a vertex space after
 leaving it. Uniform boundedness now follows from the quasi-isometry between 
$\til H$ and (the Cayley graph of) $\pi_1(M) (= \pi_1(H))$.

 However they can be replaced first by paths lying entirely on $D_i$ and then pushed out of 
$\til{H_i}$ altogether. Thus, replacing the geodesic segments $\lambda_-$ (or $\lambda_+$)  by quasigeodesics if necessary, we can ensure
that
$\lambda_-$ (or $\lambda_+$) is separated from $p$ by a disk $D_i
\subset \til{H}$ lying outside $B_n(p)$. We shall ignore this  mild technicality.

\bigskip

Recall that $M$ is a bi-Lipschitz model for $N$, which in turn is $N^h$ with cusps removed. Also recall that $d_G$ is the graph metric on
$\til M$. Then the quasidisk $A_i$ is
quasiconvex in $(\til{M}, d_G)$ and lies outside a large ball of radius $m_2(n)$ about
$p$, where $m_2(n) \rightarrow \infty$ as $n \rightarrow \infty$. 
Let $\lambda_{+}^h$ (resp. $\lambda_{-}^h$) be the hyperbolic geodesics joining the end-points
of $\lambda_{+}$ (resp. $\lambda_{-}$).
Again, by constructing admissible paths and electro-ambient
quasigeodesics as in the proof of Theorem \ref{ct-free}, we obtain a
new function $m_3(n)$ such that
$m_3(n) \rightarrow \infty$ as $n \rightarrow \infty$ and so that the
hyperbolic geodesics $\lambda_{-}^h$ or  $\lambda_{+}^h$ lie outside a
ball of radius $m_3(N)$ about $p$ in $\til{N^h}$.\\

\noindent{\bf Step 3:} Therefore $\lambda_{-}^h \cup \lambda_{0}^h \cup
\lambda_{+}^h$ lies outside a ball of radius $m_4(n) = min \{ m_1(n),
m_3(n) \}$. Finally, since $\til{N^h}$ is $\delta$-hyperbolic, the hyperbolic
geodesic $\lambda^h$ joining the end-points of $\lambda$ lies outside
a ball of radius $m(n) = m_4(n) - 2 \delta$ about $p$. Also,
$m(n) \rightarrow \infty$ as $n \rightarrow \infty$. Therefore, by
Lemma \ref{contlemma} or \ref{crit-relhyp}, it follows that 
 the inclusion
  $\til{i}: \til{H} \rightarrow \til{M}$ extends continuously to a map
  $\hat{i}: \hhat{H} \rightarrow \hhat{M}$ or equivalently that  $\til{i}: \til{N_{gf}} \rightarrow \til{N^h}$ extends continuously to a map
 $\hat{i}: \hhat{N_{gf}} \rightarrow \hhat{N^h}$. This concludes the proof. $\Box$

\medskip

\begin{remark}
As mentioned in Remark \ref{minapp}, the hypothesis that each degenerate end of $M$ admits a Minsky model is expected
to be superfluous and is established in Appendix \ref{app} when $M$ has no parabolics.
\end{remark}

Let $\hhat{{G}_F}$ denote the Floyd compactification of a group $G$
(See \cite{Floyd}). McMullen conjectured in \cite{ctm-locconn} that
there exists a continuous extension of $i:\Gamma_G \rightarrow
\til{M}$ to a map from $\hhat{G_F}$ to $\hhat{M}$. It was shown by
Floyd in \cite{Floyd} that there is a continuous map from $\hhat{G_F}$
to $\hhat{H}$. Combining this with  Theorem \ref{ct} above for Kleinian groups with parabolics, we  get a 
proof of the following, which proves McMullen's conjecture.

\begin{theorem} Let $G$ be any finitely generated Kleinian group  and $M =
  {\Hyp}^3/G$, Further suppose that each degenerate end of $M$ admits a Minsky model (for instance
if $M$ has no parabolics). Then there is a
  continuous extension $\hat{i}: \hhat{G_F} \rightarrow \hhat{M}$.
\label{conj-ctm}
\end{theorem}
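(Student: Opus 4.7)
The plan is to obtain the map $\hat i\colon \hhat{G_F} \to \hhat M$ simply as a composition of two continuous maps that are already in hand: first Floyd's continuous extension from the Floyd compactification to the relative hyperbolic compactification, and then the Cannon-Thurston extension from Theorem \ref{ct}. In more detail, identify $\Gamma_G$ equivariantly with (a quasi-isometric copy of) $\til H$ via the orbit map $i\colon \Gamma_G \to \Hyp^3$, viewed as landing in $\til{N_{gf}}$, the universal cover of the augmented Scott core of $N^h=\Hyp^3/G$. Under this identification the relative hyperbolic boundary $\partial \Gamma_G$ is the same as $\partial \widetilde{N_{gf}}$, so $\hhat H$ and the relative hyperbolic compactification of $\Gamma_G$ agree.

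The first step is to invoke Floyd's theorem from \cite{Floyd}, which produces a continuous $G$-equivariant surjection $\phi\colon \hhat{G_F}\to \hhat{H}$ extending the identity on $\Gamma_G$. Floyd's argument is entirely combinatorial: one shows that the Floyd metric contracts faster than the relative hyperbolic metric along rays, so Cauchy sequences for the Floyd metric are sent to Cauchy sequences in the relative hyperbolic compactification. Since $G$ is finitely generated and its parabolics inject into relatively hyperbolic peripheral subgroups (of $\Gamma_G$), the hypotheses of Floyd's construction are automatic here.

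The second step is just Theorem \ref{ct}: under the standing hypothesis that each degenerate end of $M$ admits a Minsky model, the inclusion $\til i\colon \til H \to \til M$ extends continuously to $\hat{\til i}\colon \hhat H \to \hhat M$ between relative hyperbolic compactifications. Composing, set $\hat i := \hat{\til i}\circ \phi\colon \hhat{G_F}\to \hhat M$. This is continuous as a composition of continuous maps, and it restricts to $i$ on $\Gamma_G\subset \hhat{G_F}$ because both $\phi$ and $\hat{\til i}$ restrict to the identification of $\Gamma_G$ with the orbit of a base point.

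The only thing to verify is that the two maps are actually compatible on their common domain, i.e.\ that Floyd's map lands in the correct relative hyperbolic boundary to which Theorem \ref{ct} applies; this is routine once one matches Floyd's peripheral structure (cosets of maximal parabolic subgroups of $G$) with the horospherical structure used in defining $\hhat H$ via the augmented Scott core. I do not anticipate a serious obstacle here: the main work has already been done in Theorem \ref{ct}, and Floyd's result is used as a black box. The only genuinely delicate point is that Floyd's map is in general neither injective nor surjective in strong ways, but continuity of the composition is all that Theorem \ref{conj-ctm} asserts.
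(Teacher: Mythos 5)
Your proposal is correct and is essentially the paper's own proof: the paper likewise cites Floyd's result \cite{Floyd} to obtain a continuous map $\hhat{G_F}\to\hhat H$ and then composes with the Cannon-Thurston map from Theorem \ref{ct}. The additional commentary you give about matching Floyd's peripheral structure with the horospherical structure is a reasonable sanity check, but the core two-step composition is identical to the paper's argument.
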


\section{Point pre-images of the Cannon-Thurston Map }\label{ptpreim}
In this section, we first
determine the pre-images of multiple points under the Cannon-Thurston map for degenerate free Kleinian groups $G$. 
We then indicate the modifications necessary to extend the results to arbitrary
finitely generated Kleinian groups. This is done for two reasons: \\
a) Otal had specifically conjectured \cite{otal-thesis} the structure of the Cannon-Thurston maps we prove for handlebody groups.\\
b) The additional work required for arbitrary finitely generated Kleinian groups involves extra book-keeping
with respect to a finite family of ends.\\
The extra generality at this stage would tend to clutter up the exposition.

We shall not have need to distinguish between the hyperbolic manifold and its bi-Lipschitz model any longer and will denote
the manifold by $M$.

We set up some notation for the purposes of this section. Let $G$ be a free degenerate Kleinian group without parabolics. Suppose that $G$ is not geometrically
finite. Let $M = {\mathbb{H}}^3/G$ be the quotient manifold. Note that the limit set of $G$ is all of the sphere at infinity. Hence $M$ is its own convex core.
Let $H$ be a compact core of $M$. $H$ is a handlebody whose inclusion into $M$ induces a homotopy equivalence. In fact, $M$ deformation
retracts onto $H$. Then $\til H$ is embedded in $\til{M} = {\mathbb{H}}^3$. Let $\Gamma$ denote the Cayley graph of $G$ with respect to some finite generating set of
$G$. Assume that $\Gamma$ is equivariantly 
embedded in $\til H$ with edges being mapped to geodesic segments.
 Let $S$ denote the boundary surface of $H$. We assume that the ending lamination $\Lambda_{EL}$
is a geodesic lamination on $S$ equipped with some (any) hyperbolic metric. This is well-defined only
up to Dehn twists along simple closed curves in $S$ that bound
disks in $H$ and gives a well-defined ending lamination in the Masur domain by Theorem \ref{elinmd}. To make this explicit, we denote the
ending lamination in the Masur domain by $\Lambda_{ELH}$. $M \setminus Int( H)$ is homeomorphic to
$S \times [0, \infty )$ and is bi-Lipschitz homeomorphic to an end $M_S$ of a simply degenerate hyperbolic manifold  without accidental parabolics
\cite{bowditch-model} \cite{minsky-elc3}. Thus 
$S \times [0, \infty ) \subset M$ equipped with its {\it intrinsic} path metric is bi-lipschitz homeomorphic to  $M_S$. We shall have need to pass
interchangeably between these two below.

\subsection{EL leaves are CT leaves}

Let $i : \til H \rightarrow \til{M}$ denote the inclusion. 
 Let $\partial i$ denote the continuous extension of $i$
to the boundary in Theorem \ref{ct-free}. Note that the inclusion of $\Gamma$ into $\til H$ with its intrinsic metric is a quasi-isometry. So we might as
well replace the inclusion of $\Gamma$ into $\til M$ by that of $\til H$ into $\til M$. We shall show that
point pre-images of multiple points under $\partial i$
correspond to
end-points of leaves of an ending lamination in the Masur domain. 

The inclusion of $S$ into $H$ as its boundary induces a surjection of fundamental groups. Let $N$ denote the kernel. Let $S_N (=\partial \til{ H})$ 
denote the cover of $S$ corresponding to $N$. 

To distinguish between the ending lamination $\Lambda_{ELH}$ (in the Masur domain) 
and bi-infinite geodesics whose end-points are identified by $\partial i$, we make the following definition.

\begin{definition}
A {\bf CT leaf} $\lambda_{CT}$ is a bi-infinite geodesic whose end-points are identified by $\partial i$. \\ An {\bf EL leaf} $\lambda_{EL}$ is a bi-infinite geodesic whose end-points
are  ideal boundary points of
either a leaf of the ending lamination, or a complementary ideal polygon.
\end{definition}

We shall show that  \\
$\bullet$ {\bf An {\it EL leaf} is a {\it CT leaf}.}\\
$\bullet$ {\bf A {\it CT leaf} is an {\it EL leaf}.}

\begin{prop} {\bf  EL  is  CT } Let $G$ be a free degenerate Kleinian group without parabolics.
 Let $u,v$ be either ideal 
end-points of a leaf of an ending lamination of $G$, or ideal boundary points of a
complementary ideal polygon. Then
 $\partial i(u) = \partial i(v)$.
\label{ptpreimage}
\end{prop}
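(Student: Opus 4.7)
The plan is to approximate the leaf $\til\ell$ from $u$ to $v$ by simple closed curves on $S$ whose geodesic realizations exit the end $E$, and then exploit the $G$-equivariance of the Cannon-Thurston map $\boundary i$. The case of endpoints of a complementary ideal polygon reduces to the leaf case by transitivity of equality: any two vertices of such a polygon are connected by a chain of boundary leaves, so it suffices to treat pairs $u,v$ that are endpoints of a single leaf.

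By Theorem \ref{elinmd} the ending lamination $\Lambda_{ELH}$ lies in the Masur domain $\PP\MM\DD(S)$. By the definition of the ending lamination, one can pick a sequence of simple closed curves $\sigma_n \in \PP\MM\DD(S)$ converging to $\Lambda_{ELH}$ in $\PML(S)$ whose geodesic realizations $\sigma_n^\ast$ in $M$ exit the end $E$: for any basepoint $p \in \til H$, $d_M(\pi(p),\sigma_n^\ast) \to \infty$, where $\pi:\til M \to M$ is the covering projection. Since $\sigma_n$ lies in the Masur domain it is not a meridian, so $[\sigma_n] \in G = \pi_1(H)$ is nontrivial and hence hyperbolic (as $G$ is torsion-free without parabolics). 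Lift $\til\ell \subset S_N = \boundary \til H$ to a leaf $\til\ell'$ in the universal cover $\til S$, and choose lifts $\til\sigma_n' \subset \til S$ of $\sigma_n$ converging to $\til\ell'$ in the Chabauty topology; projecting to $S_N$ produces bi-infinite geodesics $\til\sigma_n \subset \boundary \til H$ whose pairs of endpoints $u_n, v_n$ converge to $u, v$ in $\boundary \til H = \boundary G$. By construction, $u_n$ and $v_n$ are the two fixed points of $[\sigma_n]$ acting on $\boundary G$.

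By the $G$-equivariance of $\boundary i$ (which follows from $G$-equivariance of $i:\til H\to \til M$ together with continuity), the images $\boundary i(u_n), \boundary i(v_n)$ are the two fixed points of $[\sigma_n]$ on $\boundary \Hyp^3$, namely the ideal endpoints of the axis $\alpha_n$ of $[\sigma_n]$ in $\til M = \Hyp^3$. Since $\alpha_n$ projects to $\sigma_n^\ast$ and $\pi$ is distance-non-increasing,
\[
d_{\til M}(p,\alpha_n) \;\geq\; d_M(\pi(p),\sigma_n^\ast) \;\longrightarrow\; \infty .
\]
Elementary $\Hyp^3$ geometry then shows that the visual diameter of $\{\boundary i(u_n),\boundary i(v_n)\}$ as seen from $p$ tends to $0$ (the two endpoints of a geodesic at distance $d$ from $p$ subtend an angle $\lesssim e^{-d}$). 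Extracting a subsequence along which the nearest-point projections of $p$ onto $\alpha_n$ converge to a point $q \in \boundary \Hyp^3$, both $\boundary i(u_n)$ and $\boundary i(v_n)$ converge to $q$. Continuity of $\boundary i$ gives $\boundary i(u_n)\to \boundary i(u)$ and $\boundary i(v_n)\to \boundary i(v)$, whence $\boundary i(u) = \boundary i(v) = q$.

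The main technical point I expect to have to justify carefully is the simultaneous existence and convergence of the approximating sequence $\sigma_n$: that one can arrange $\sigma_n$ to (i) lie in the Masur domain, (ii) project-to-converge to $\Lambda_{ELH}$, (iii) have geodesic realizations exiting $E$, and (iv) admit lifts in $S_N$ with the prescribed endpoint convergence. The well-definedness of $\Lambda_{ELH}$ modulo $\Mod_0(S)$ in Theorem \ref{elinmd}, together with Otal's treatment of the Masur domain for compressible cores, supplies (i)--(iii); property (iv) follows from Chabauty convergence in $\til S$ and the local-isometry nature of the projection $\til S \to S_N$. Once these are granted, the remainder is essentially bookkeeping of fixed points under the equivariant boundary extension.
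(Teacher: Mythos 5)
Your proof takes a genuinely different route from the paper's. The paper builds trapezium paths $b_i = t_{1i}\circ s_i\circ t_{2i}^{-1}$, argues via a ``definite progress'' analysis of the side segments $t_{1i},t_{2i}$ in the split-geometry model that each $b_i$ exits all compacta, and then uses $\delta$-thinness of quadrilaterals to conclude that the $\Hyp^3$-geodesic joining the endpoints of $a_i$ does too. You instead work directly with the axes $\alpha_n$ of the hyperbolic isometries $[\sigma_n]$ and exploit $G$-equivariance of $\partial i$. This is cleaner in one respect: the $\Hyp^3$-geodesic you need to control \emph{is} $\alpha_n$ itself (it is exactly the geodesic joining $\partial i(u_n)$ to $\partial i(v_n)$), so the worry addressed in the paper's parenthetical ``Note'' --- that a path may exit all compacta while the geodesic joining its endpoints does not --- simply never arises for you.

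The gap is at the step you half-acknowledge as ``property (iv)'': that $u_n\to u$ and $v_n\to v$ in $\partial\Gamma_G$. Chabauty convergence $\til{\sigma}_n'\to\til{\ell}'$ in $\til S$, together with the local-isometry $\til S\to S_N$, controls only the \emph{intrinsic} geometry of $S_N$; it says nothing directly about the Gromov compactification $\hhat{\Gamma_G}$ of $\til H$. The inclusion of $(S_N,\text{intrinsic})$ into $\til H$ is not a quasi-isometry, so an axis $\til{\sigma}_n$ that agrees with $\til{\ell}$ on a large intrinsic compact of $S_N$ may a priori backtrack in $\til H$ beyond that compact and limit onto points of $\partial\Gamma_G$ unrelated to $u,v$; the fixed points $u_n,v_n$ of $[\sigma_n]$ are determined by the entire periodic orbit, not by the middle segment near $\til{\ell}$. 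What you need is that the Cayley-graph axis of $[\sigma_n]$ fellow-travels the bi-infinite Cayley geodesic from $u$ to $v$ for longer and longer --- precisely the ``no backtracking / definite progress'' control that the paper's trapezium construction is engineered to deliver, and which ultimately rests on the Masur-domain hypothesis and the split-geometry model of the end. Without an argument here, ``by continuity of $\partial i$'' cannot be invoked, and the proof is incomplete.
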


\begin{proof} This is almost identical to Proposition 3.1 of \cite{mahan-elct}. However, since the setup is
somewhat different we include a sketch of a proof.
 Take a sequence of short geodesics $\underline{s_i}$
 exiting the
end. Let $\underline{a_i}$ be geodesics in the intrinsic metric
on the  boundary $S$ (of $H$)
freely homotopic to $\underline{s_i}$. By topological tameness \cite{agol-tameness} \cite{gab-cal} and geometric tameness (\cite{thurstonnotes} Ch. 9) we may assume
 further that $\underline{a_i}$'s are simple closed curves on $S$. Join $\underline{a_i}$ to
 $\underline{s_i}$ by the shortest geodesic $\underline{t_i}$
 in $S \times [0, \infty )$ connecting the two
 curves. Then the collection $\underline{a_i}$ may be chosen to converge to the ending
 lamination on $S$ (\cite{thurstonnotes} Ch. 9). Also, in $S_N  \subset \til H \subset \til M$, we choose lifts
$a_i (\subset \til{S})$ (of $\underline{a_i}$)  
which are finite segments whose end-points are identified by the
 covering map $P: \til{S \times [0, \infty )} \rightarrow S \times [0, \infty )$. We also assume that $P$ is
 injective restricted to the interior of $a_i$'s mapping to
 $\underline{a_i}$.
 Similarly there
 exist 
 segments
$s_i \subset \til{M}$ 
which are finite segments whose end-points are identified by the
 covering map $P: \til{M} \rightarrow M$. We also assume that $P$ is
 injective restricted to the interior of $s_i$'s. 
The finite segments $s_i$ and $a_i$ are chosen in such
 a way that there exist 
 lifts $t_{1i}$, $t_{2i}$, joining end-points of $a_i$
 to corresponding end-points of $s_i$. The union of these four pieces looks like a trapezium (see below, where we have omitted subscripts for convenience).

\begin{center}

\includegraphics[height=4cm]{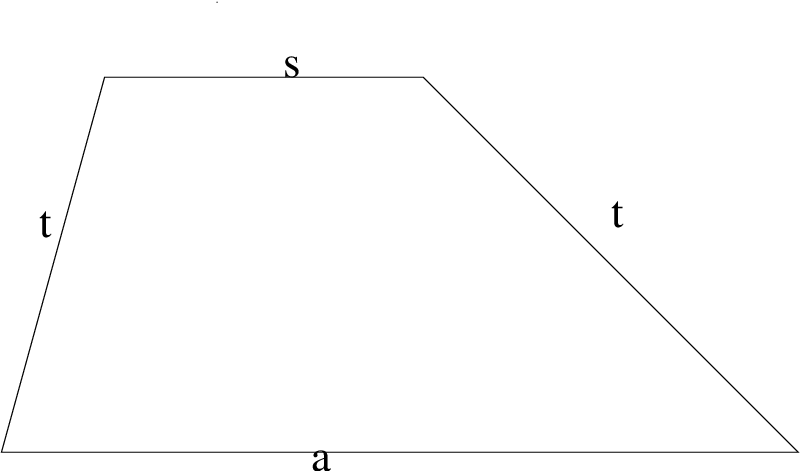}

\underline{Figure:  {\it Trapezium} }

\end{center}

\smallskip

Next, given any leaf $\lambda$ of the ending lamination, we may choose
translates of the finite segments $a_i$
(under the action of $\pi_1(H)$) appropriately, such that they
converge to $\lambda$ in $S_N$. For each $a_i$, let 

\begin{center}
$b_i = t_{1i} \circ s_i \circ {t_{2i}^{-1}}$
\end{center}
where ${t_{2i}^{-1}}$ denotes $t_{2i}$ with orientation
  reversed. If the translates of $a_i$ we are considering
  have end-points lying outside large balls around a fixed reference
  point $p \in S_N$, it is not hard to see that $b_i$'s lie
  outside large balls about $p$ in $\til{M}$. 
  
Here is a quick  sketch. 
We think of $\underline{t_i}$ as vertical and $a_i, s_i$ as horizontal.
We justify this heuristic.  We choose $\underline{t_i}$ to be a  distance minimizing geodesic  between a loop in
$S \times \{ 0 \}$ corresponding to $a_i$ and loop in
$S \times \{ i \}$ corresponding to $s_i$.
Saying that $\underline{t_i}$ is vertical
 is meant to convey the idea that $\underline{t_i}$ is indeed well-behaved with respect to the block
structure (coming from split geometry of the end) in terms of 
the progress it makes out the end. A (coarse) measure of the vertical co-ordinate of $z \in \underline{t_i}$
is the number of (thick or split) blocks that lie `below' $z$. 
The segments $t_{1i}, t_{2i}$ make {\it definite progress} out the end $S \times [0, \infty)$
in the sense that the number of blocks below $z$ (i.e. coarse vertical co-ordinate of $z$) is a {\bf proper} function
(say $ \Phi$) of the length of the initial segment of $\underline{t_i}$ up to $z$. Further, this proper function 
$ \Phi$ is
independent of the index $i$. This follows from the fact that a distance minimizing geodesic like $\underline{t_i}$
can spend only a bounded amount of time in each (split or thick) block. The bound depends on the block in general
but this is adequate for definite progress.

Next, 
$t_{1i}, t_{2i}$ are both lifts of $\underline{t_i}$.    Similarly
$s_i$ is contained in $ \til{S} \times \{ i \}$ and $a_i$
is contained in  $ \til{S} \times \{ 0 \}$ allowing us to legitimize the statement
that they are `horizontal'. Thus each $S_i$ and paths lying on them are being thought of as horizontal and paths
that make definite progress transversely are being thought of as vertical.  

Also 
  $\underline{s_i}$ lies at a large vertical height (at least $O(i)$)
from $H$ and hence $s_i$ lies at a large distance (at least $O(i)$) from $p$. Next, the initial
  point of $t_{1i}$ lies at a large horizontal distance $d_{hor}$ from $p$. Note that $d_{hor}$
may be chosen to be as large as we like (independent of $i$).
The vertical distance from $p$ increases monotonically (according to the proper function $\Phi$ independent of $i$)
as we proceed along 
   $t_{1i}$. The horizontal component (of a point $z$ on $t_{1i}$)
can go down at each step (as we move from the $m-$th block to the $(m+1)-$th block)
by a constant depending only on $m$. Since $d_{hor}$ may be chosen to be arbitrarily large,
it follows that for any point on $t_{1i}$ (or $t_{2i}$) at least one of the vertical or horizontal distances is large. 

\medskip

\noindent {\bf Note:} A parenthetical comment is in order.
 It is indeed possible (as was pointed out to us by the referee) to have a situation where
\begin{enumerate}
\item   geodesics $t_{1i}$ and $t_{2i}$ in $\mathbb{H}^3$  start and end outside large compacts 
\item and the geodesic $a_i$ also lies outside large compacts.
\item But the geodesic $\sigma_i$ joining the endpoint of $b_i$ do intersect a given compact $K_o$.
\end{enumerate}
To ensure this, the geodesics $t_{1i}$ and $t_{2i}$ must lie very close to $\sigma_i$ for a very long
period of time, and hence must themselves come close to $K_o$. This is prevented in our situation
by the definite progress of $\underline{t_i}$ (and hence $t_{1i}$ or $t_{2i}$).

\medskip

Returning to the proof,
we summarize  by saying  that 
   $t_{1i}$ (and similarly $t_{2i}$) lie at a  large  distance from $p$. Since $s_i$ is uniformly bounded in
length, it follows that
 $b_i$ lies at a  large  distance from $p$.

  Since ${\mathbb{H}}^3$
is $\delta$-hyperbolic for some $\delta > 0$, it follows that the geodesic joining the end-points of $b_i$ (and hence $a_i$ which has the same end-points) lies in a $2\delta -$ neighborhood
of $b_i$.

At this stage we invoke the existence theorem for Cannon-Thurston
maps, Theorem \ref{ct-free}. Since $a_i$'s converge to
$\lambda$ and the hyperbolic geodesics
 joining the end-points of $a_i$  exit all
compact sets, it follows that  $\partial i(u) = \partial i(v)$, where
$u, v$ denote the boundary points of $\lambda$. The Proposition
follows.
\end{proof}

\smallskip

\noindent {\bf Generalization to arbitrary finitely generated Kleinian groups:}
Any finitely generated Kleinian group is geometrically tame 
(\cite{agol-tameness} \cite{gab-cal} \cite{thurstonnotes} Ch. 9) and has finitely many ends. 
Observe that the proof of the above Proposition used the freeness of $G$ only at the stage of applying Theorem \ref{ct-free}. The same proof goes through
verbatim for freely decomposable Kleinian groups with degenerate ends. The only modification to the above proof is that we consider one end of the manifold $M$
at a time (and the pigeon-hole principle) along with Theorem \ref{ct} in place of Theorem \ref{ct-free}  to obtain the following Proposition.

\begin{prop} {\bf  EL  is  CT - General Case} Let $G$ be a finitely generated freely decomposable  Kleinian group.
 Let $u, v$ be either ideal 
end-points of a leaf of an ending lamination of $G$, or ideal boundary points of a
complementary ideal polygon. Then
 $\partial i(u) = \partial i(v)$.
\label{ptpreimage-genl}
\end{prop}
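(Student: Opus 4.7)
The plan is to mimic the trapezium argument of Proposition \ref{ptpreimage} one end at a time, replacing the role played by Theorem \ref{ct-free} with the more general Theorem \ref{ct}. By topological and geometric tameness (\cite{agol-tameness, gab-cal} and \cite{thurstonnotes} Ch.~9), $M = \mathbb{H}^3/G$ has finitely many ends, and the bi-infinite geodesic $\lambda$ with endpoints $u,v$ arises as either a leaf of the ending lamination of some specific degenerate end $E$, or as a boundary leaf of a complementary ideal polygon in such an $E$. So after fixing $E$, it suffices to reproduce the argument of Proposition \ref{ptpreimage} inside $E$; the other ends play no role, and this is the only place where the ``pigeon-hole principle'' enters, namely to single out the end to which $\lambda$ is attached.

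Next I would carry out the trapezium construction inside $E$ exactly as in the free case. By geometric tameness there is a sequence of simple closed curves $\underline{a_i}$ on $S = \partial E$ whose geodesic realizations $\underline{s_i}$ exit $E$, and we may arrange (after the usual choice of translates by $\pi_1(H)$) that the lifts $a_i$ in the boundary cover $S_N$ converge to $\lambda$. Letting $\underline{t_i}$ be the shortest geodesic in $E$ joining $\underline{a_i}$ to $\underline{s_i}$, and choosing lifts $t_{1i}, t_{2i}$ joining corresponding endpoints of $a_i$ to those of a lift $s_i$, we form the trapezium path $b_i = t_{1i} \circ s_i \circ t_{2i}^{-1}$. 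The key geometric input is the ``definite progress'' of $\underline{t_i}$ through the split geometry blocks of $E$, and this is a purely local statement about a single degenerate end; by Theorem \ref{minsky-split} such a split geometry structure exists on $E$ irrespective of whether $H$ has compressible boundary, so the estimates of Proposition \ref{ptpreimage} that force $b_i$ to exit every compact set of $\til M$ go through verbatim.

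The main (and essentially only) step requiring genuine new input is the appeal to the existence of the Cannon-Thurston map in this generality: once we know the hyperbolic geodesic joining the endpoints of $a_i$ lies within a $2\delta$-neighborhood of $b_i$ (hence exits every compact set of $\til M$), we invoke Theorem \ref{ct} to conclude that $\partial i(u) = \partial i(v)$. If $(u,v)$ are instead endpoints of a complementary ideal polygon of the ending lamination, the same construction applied to the two leaves bounding the polygon on either side of $(u,v)$ gives the identification after one or two applications of transitivity. The hardest conceptual point is therefore not in the argument itself but already subsumed in Theorem \ref{ct}: namely, that the existence of Cannon-Thurston maps has been established without assuming freeness of $G$, which is precisely the content of the preceding section.
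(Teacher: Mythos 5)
Your proposal matches the paper's argument: fix the degenerate end $E$ carrying the leaf (finitely many ends plus pigeon-hole), observe that the trapezium/definite-progress argument of Proposition \ref{ptpreimage} is local to a single end equipped with a split geometry structure (Theorem \ref{minsky-split}), and substitute Theorem \ref{ct} for Theorem \ref{ct-free} as the source of the Cannon-Thurston map. This is precisely how the paper disposes of the general case, so the proposal is correct and takes essentially the same approach.
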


\subsection{CT leaves are EL leaves} As usual we deal first with  free
degenerate groups without parabolics.
We restate Theorem \ref{ptpre-inde} in a form that we shall use. Recall that
$M \setminus Int( H)$ is homeomorphic to
$S \times [0, \infty )$ and is bi-Lipschitz homeomorphic to an end $M_S$ of a simply degenerate hyperbolic manifold  without accidental parabolics
\cite{bowditch-model} \cite{minsky-elc3}. Hence by Theorem \ref{ptpre-inde} we have the following.

\begin{theorem} \cite{mahan-elct} Let $S, M_S$ be as above.
 Then the inclusion of universal covers
$\tilde{j} : \widetilde{S} \rightarrow \widetilde{M_S}$ 
extends
continuously to the boundary. Further, pre-images of points
on the boundary  are precisely ideal boundary points of a leaf of the ending
lamination $\Lambda_{ELS}$ of $M_S$, or ideal boundary points of
a complementary ideal polygon whenever the Cannon-Thurston map is not one-to-one.
\label{ptpre0}
\end{theorem}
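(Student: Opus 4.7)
The plan is to reduce the statement directly to Theorem \ref{ptpre-inde}. By hypothesis, $M_S$ is a simply degenerate end of a hyperbolic $3$-manifold whose fundamental group is the surface group $\pi_1(S)$, and which has no accidental parabolics. In particular, regarding $M_S$ as the interior of a pared manifold $(S\times[0,\infty),P)$ where $P$ encodes cusps, the boundary is incompressible rel.\ cusps (cusps come from boundary curves of $S$, and no interior loop of $S$ is parabolic). Thus $M_S$ falls squarely within the setup of Theorem \ref{ptpre-inde}.

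Consequently, taking $N^h=M_S$ and $M_{gf}$ a geometrically finite hyperbolic structure adapted to the same pared topological type, Theorem \ref{ptpre-inde} produces a continuous extension
\[
\partial\tilde{j}\colon \partial\widetilde{M_{gf}}\to\partial\widetilde{M_S}
\]
of the inclusion of universal covers. Since $\widetilde{S}$ (with its intrinsic hyperbolic metric) is quasi-isometric to $\widetilde{M_{gf}}$ via the lift of the inclusion $S\hookrightarrow M_{gf}$ (this uses incompressibility and the fact that $S$ is a core surface of $M_{gf}$), the two boundaries are equivariantly homeomorphic, and $\partial\tilde j$ transports to the desired continuous extension $\partial\widetilde{S}\to\partial\widetilde{M_S}$.

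For the point pre-image description, Theorem \ref{ptpre-inde} identifies $\partial\tilde j(a)=\partial\tilde j(b)$ with the relation generated by endpoints of lifts of leaves of ending laminations of degenerate ends. In our situation there is a single degenerate end (the upward end of $S\times[0,\infty)$), with a single ending lamination $\Lambda_{ELS}$ on $S$. Because $\Lambda_{ELS}$ is a filling lamination on a surface, each leaf of the lift to $\widetilde S$ is embedded with distinct endpoints and the equivalence classes in $\partial\widetilde S$ are precisely the endpoint sets of individual leaves or of complementary ideal polygons; there is no nontrivial transitive closure to take. This yields exactly the claimed description of multiple point pre-images.

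The main obstacle, such as it is, is bookkeeping rather than a new idea: one must verify that the pared/incompressible-boundary hypothesis of Theorem \ref{ptpre-inde} genuinely applies to $M_S$, and that the identification of $\partial\widetilde S$ with $\partial\widetilde{M_{gf}}$ is $\pi_1(S)$-equivariant so that the relation $\mathcal R$ on the latter pulls back correctly to the relation on the former. Both are straightforward since $\pi_1(S)\to\pi_1(M_{gf})$ is an isomorphism and $\widetilde S\hookrightarrow\widetilde{M_{gf}}$ is a quasi-isometry with totally geodesic (up to quasi-isometry) image.
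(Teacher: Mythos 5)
Your reduction to Theorem \ref{ptpre-inde} is exactly what the paper does: the text immediately preceding Theorem \ref{ptpre0} invokes the bi-Lipschitz identification of $M\setminus\mathrm{Int}(H)$ with a simply degenerate end $M_S$ without accidental parabolics and then cites Theorem \ref{ptpre-inde} directly. One small inaccuracy in your write-up: the claim that ``there is no nontrivial transitive closure to take'' is not quite right. In Theorem \ref{ptpre-inde} the basic relation $\RR_\LL$ only identifies the two endpoints of a single leaf, so the identification of the full vertex set of a complementary ideal polygon \emph{is} obtained by the transitive closure (two sides of a polygon are distinct leaves sharing a vertex, and chaining such identifications around the polygon is exactly the nontrivial content of the closure). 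The closure step that really does become vacuous in the simply degenerate setting is the one across different degenerate ends, since here there is only one; but within that single lamination the transitive closure is still doing work. This does not affect the correctness of your conclusion.
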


We identify the Cayley graph $\Gamma$ of the free group with a subset of $ \til H \subset \til M$, viz. the orbit of a base-point joined by edges. The
next Theorem is one of the main Theorems of this paper.

\begin{theorem}  Let $G$ be a free degenerate  Kleinian group without parabolics. 
 Let $i : \Gamma_G \rightarrow {\mathbb H}^3$ be the natural identification
of a Cayley graph of $G$ with the orbit of a point in ${\mathbb H}^3$. Then $i$ extends continuously to a map 
$\hat{i}: \hhat{\Gamma_G} \rightarrow {\mathbb D}^3$,
where $\hhat{\Gamma_G}$ denotes the (Gromov) hyperbolic compactification of $\Gamma_G$. Let $\partial i$ denote the restriction of $\hat{i}$ to the boundary $\partial \Gamma_G$ of $\Gamma_G$.

 Then $\partial i(a) = \partial i(b)$ for $a\neq b \in \partial \Gamma$ iff $a, b$ are either ideal end-points of a leaf of an ending lamination of $G$, or ideal boundary points
of a complementary ideal polygon.
\label{ptpreimagefinal}
\end{theorem}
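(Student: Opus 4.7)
The existence of the continuous extension $\hat{i}$ is Theorem \ref{ct-free}, and the direction asserting that endpoints of an EL leaf (or ideal vertices of a complementary ideal polygon) are identified by $\partial i$ is Proposition \ref{ptpreimage}. What remains is the converse: if $\partial i(a) = \partial i(b) = \xi$ for distinct $a, b \in \partial \Gamma_G$, then $a, b$ are ideal endpoints of a leaf of the ending lamination of $G$ or of a complementary ideal polygon. The plan is to reduce this to the surface-group point preimage theorem (Theorem \ref{ptpre0}) via the bi-Lipschitz end model of Theorem \ref{free-model}.

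For the setup, let $S = \partial H$ and $N = \ker(\pi_1(S) \twoheadrightarrow \pi_1(H) = G)$, so that $\partial \til H$ is the $N$-cover $S_N$ of $S$. Since $N$ is normal in $\pi_1(S)$ with quotient $G$, the Schreier coset graph of $(\pi_1(S), N)$ coincides with the Cayley graph $\Gamma_G$; consequently $S_N$, $\til H$ (with its intrinsic path metric), and $\Gamma_G$ are pairwise quasi-isometric and share the Gromov boundary $\partial \Gamma_G$. The end $E = M \setminus \mathrm{Int}(H) \cong S \times [0, \infty)$ is bi-Lipschitz homeomorphic to an end of a simply degenerate surface group manifold $M_S$; thus the universal cover $\til E \cong \til S \times [0, \infty)$ embeds bi-Lipschitzly as an end region of $\til{M_S}$, while on the other side $\til E$ covers the lift $E_N \cong S_N \times [0, \infty) \subset \til M$ with deck group $N$, and $\til M = \til H \cup_{S_N} E_N$.

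Now assume $\partial i(a) = \partial i(b) = \xi$ and choose sequences $x_n, y_n \in S_N$ with $x_n \to a$, $y_n \to b$ on $\partial \til H$ and $i(x_n), i(y_n) \to \xi$ in the compactification of $\til M$. Lift through $\til S \to S_N$ to $\widetilde x_n, \widetilde y_n \in \til S$, with $\widetilde x_n \to \widetilde a$ for some fixed lift $\widetilde a \in S^1 = \partial \til S$ of $a$. Viewed in $\til{M_S}$ through the bi-Lipschitz inclusion $\til E \hookrightarrow \til{M_S}$, the $\widetilde x_n$ converge to some $\widetilde \xi \in \partial \til{M_S}$. The main technical step is to select $N$-translates $\widetilde y_n' = n_0 \cdot \widetilde y_n$ (with $n_0 \in N$ independent of $n$) such that $\widetilde y_n' \to \widetilde \xi$ in $\partial \til{M_S}$. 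This is possible because, once the coincident convergence $i(x_n), i(y_n) \to \xi$ is lifted through the $N$-covering $\til E \to E_N$, the possible ideal limits in $\partial \til{M_S}$ of lifts of $y_n$ form a single $N$-orbit; one selects $n_0$ placing that limit on $\widetilde \xi$. Let $\widetilde b = \lim \widetilde y_n' \in S^1$.

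With $\widetilde a, \widetilde b$ so chosen, the Cannon-Thurston map for the simply degenerate surface group manifold $M_S$ identifies them, so by Theorem \ref{ptpre0} they are ideal endpoints of a leaf of the ending lamination of $M_S$ on $\til S$ or ideal vertices of a complementary ideal polygon. Descending via $\til S \to S_N \to S$, this leaf (or polygon) becomes a leaf (or polygon) of the ending lamination of the end $E$, which by Theorem \ref{elinmd} is a well-defined element of $\PP\MM\DD(S)/\Mod_0(S)$, hence an ending lamination of $G$. Under the natural boundary identification $\partial \til S \twoheadrightarrow \partial \Gamma_G$ induced by the covering $\til S \to S_N$, the lifts $\widetilde a, \widetilde b$ project to $a, b$, completing the converse. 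The principal obstacle is the coherent selection of $n_0 \in N$: justifying that the ideal limits of lifts of $y_n$ in $\partial \til{M_S}$ form a single $N$-orbit requires a careful use of the bi-Lipschitz model together with the covering $\til E \to E_N$, so as to transfer the CT identification between the two distinct Kleinian actions on $\mathbb{H}^3$ underlying $M$ and $M_S$.
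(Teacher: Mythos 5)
Your high-level plan (extension by Theorem \ref{ct-free}, the direction ``EL is CT'' by Proposition \ref{ptpreimage}, and reduction of the converse to the surface-group point-preimage Theorem \ref{ptpre0} via the bi-Lipschitz end model) agrees with the paper. The gap is precisely at the step you yourself flag: the assertion that, after lifting through $\til E \to E_N$, ``the possible ideal limits in $\partial \til{M_S}$ of lifts of $y_n$ form a single $N$-orbit,'' and that $\widetilde\xi = \partial j(\widetilde a)$ lies in that orbit. As written this is not established, and it cannot be derived merely from the coincident convergence $i(x_n), i(y_n) \to \xi$ in $\widehat{\til M}$. For each fixed $n$ the fibre of $\til E\to E_N$ over $y_n$ is an $N$-orbit, so the lift $\widetilde y_n$ is well defined only up to an element of $N$ which may depend on $n$; allowing this dependence, the set of subsequential limits of lifts of $y_n$ in $\widehat{\til{M_S}}$ need not be a single $N$-orbit, and nothing in your argument pins down a coherent choice tying the lift of $y_n$ to the already-chosen lift of $x_n$. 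The convergence $i(x_n),i(y_n)\to\xi$ is a statement about the $G$-action on $\mathbb H^3$, whereas the claim you need concerns the $\pi_1(S)$-action on $\til{M_S}$; relating the two boundary identifications is exactly the content of the theorem, so the $N$-orbit claim is essentially assuming a form of what is to be proved.

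What is missing is the mechanism the paper uses to control the lifts. The paper works with paths, not just sequences of points: it takes the bi-infinite geodesic $(a,b)_\Gamma$ through $1\in\Gamma$, cuts the $\til M$-geodesics $\overline{a_kb_k}$ at $S_G=\partial\til H$ to get $c_k,d_k$, and replaces the exterior arc by the intrinsic geodesic $[c_k,d_k]_G$ on $S_G$ homotopic rel endpoints in $\til M\setminus \operatorname{Int}(\til H)$. The crucial use of freeness (item (2) that the paper singles out) is that $S_G$, being quasi-isometric to the \emph{tree} $\Gamma_G$, forces $[c_k,d_k]_G$ to pass through a uniformly bounded neighborhood of $1$. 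This is what lets one lift the whole sequence $[c_k,d_k]_G$ coherently to $\til S$ with all the lifts $o_k'$ of the pass-points near a fixed $o\in\til S$, and then the limiting bi-infinite geodesic on $\til S$ passes through a compact set and descends to an actual leaf of $\Lambda_{ELG}$ on $S_G$ with endpoints $a,b$. Your point-sequence version discards this anchor, and the $N$-orbit claim is the hole left where the tree argument should be. To repair the proposal, constrain $x_n,y_n$ to lie on a geodesic through a fixed basepoint, pass to the boundary arc $[c_n,d_n]_G$, and invoke the tree-QI argument before lifting; that is in substance the paper's proof.
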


\begin{proof}
By Theorem \ref{ct-free} the inclusion $i: \Gamma \rightarrow \til M$ extends continuously
to a map between the Gromov compactifications $\hat{i} : \widehat{\Gamma} \rightarrow {\mathbb{D}}^3$.
Let $\partial i$ denote the values of the above continuous extension to the boundary. Suppose $\partial{i} (a) = \partial{i} (b)$.  $\Lambda_{EL}$ is
the ending  lamination of $M$ regarded as a subset of $S$.  Let $\Lambda_{ELG}$ denote $\Lambda_{EL}$ lifted to $S_G = \partial {\til H}$, which is a cover
of $S$.
We want to show
that $a, b$ are the end-points of a leaf of $\Lambda_{ELG}$. Suppose $(a,b)_\Gamma$ is the bi-infinite geodesic from $a$ to $b$ in $\Gamma
\subset \til M$. Assume without loss
of generality that $(a,b)$ passes through $1 \in \Gamma$. Let $a_k, b_k$ be points on  $(a,b)$ such that
 $a_k \rightarrow a$ and $b_k \rightarrow b$. Let $\overline{a_kb_k}$ denote 
the geodesic in $\til M$ joining $a_k, b_k$. By continuity of the Cannon-Thurston map (Theorem \ref{ct-free}) there exists $N(k) \rightarrow \infty$
as $k \rightarrow \infty$ such that $\overline{a_kb_k}$ lies outside an $N(k)$ ball about $1 \in \Gamma \subset \til M$, where radius is measured
in the hyperbolic metric on $ \til M$. Isotoping $\overline{a_kb_k}$ slightly, we can assume without loss
of generality  that it meets $\Gamma \subset \til M$ only at its end-points
(since $\Gamma$ is one dimensional). We can further isotope $\overline{a_kb_k}$ rel. endpoints by a bounded amount (depending on the Hausdorff distance
between $\til H$ and $\Gamma \subset \til{H}$) such that $\overline{a_kb_k}\cap S_G =\{ c_k, d_k \}$, where.\\
1)  $c_k, d_k \in S_G = \partial {\til H}$ with $d(a_k, c_k)$ and $d(b_k, d_k)$
 uniformly bounded (independent of $k$) \\
2) if $\overline{c_kd_k}$
denotes the subpath of $\overline{a_kb_k}$ between $c_k, d_k$ then (modifying $N(k)$ by an additive constant if necessary) $\overline{c_kd_k}$
lies outside an $N(k)$ ball about $1 \in \Gamma \subset \til M$.\\
3) $\overline{c_kd_k}$ intersects $\til H$ only at the endpoints $c_k, d_k$. \\

Thus $\overline{a_kb_k}$ is a concatenation of three pieces, $\overline{a_kc_k}$, $\overline{c_kd_k}$, $\overline{d_kb_k}$, where $\overline{a_kc_k}$ and 
$\overline{d_kb_k}$ are uniformly bounded in length and lie in $\til H$, whereas  $\overline{c_kd_k}$ lies in $\til{M} \setminus Int(\til{H})$.

 Let $[c_k,d_k]_{G}$ denote the geodesic in the intrinsic metric
on $S_G$ which is  homotopic (rel. endpoints)  to $\overline{c_kd_k}$ in $\til M \setminus Int(\til{H})$. Since $G$ is free, we can assume that its Cayley graph is a tree
and (since $\til H$ is quasi-isometric to $\Gamma$)  $[c_k,d_k]_{G}$ passes through a point $o_k \in S_G$ at a uniformly bounded distance from $1 \in \Gamma$.

Recall that $\til{S} (\subset \til{M_S})$ is the universal cover of $S$ inside the universal cover of the end $M_S$.
Lift $[c_k,d_k]_{G}$ to some geodesic $\til{c_k,d_k} (\subset \til{S})$ in the intrinsic metric on $\til{S}$. Further assume that there exists
some fixed $o \in \til S$ such that the corresponding lift $o_k^{\prime}$ of $o_k$ lies in a uniformly bounded neighborhood of $o$. Let 
$({\overline{c_kd_k}})^\sim$ denote the corresponding lift of  $\overline{c_kd_k}$ having the same endpoints as $\til{c_k,d_k}$ (such a choice is possible as
 $[c_k,d_k]_{G}$ and $\overline{c_kd_k}$ are homotopic rel. endpoints in the complement of $ Int(\til{H})$ in $\til M$). It follows that  
$({\overline{c_kd_k}})^\sim$ lies outside an $N(k)$-ball about $o_k^{\prime}$ in $\til{M_S}$. Hence (modifying $N(k)$ by a further additive constant if
necessary), $({\overline{c_kd_k}})^\sim$ lies outside an $N(k)$-ball about $o \in \til{M_S}$. Therefore, by the existence of Cannon-Thurston maps
for $j: \til S \rightarrow \til{M_S}$ (Theorem \ref{ptpre0}) it follows that if $\til{c_\infty d_\infty}$ denotes any subsequential
limit of the segments $\til{c_k,d_k}$ on $\til S$, then $\partial j (c_\infty) = \partial j (d_\infty) $ and hence again by 
Theorem \ref{ptpre0} $c_\infty , d_\infty$ are end-points of leaves (or vertices of a complementary ideal polygon)
of the ending lamination  $\Lambda_{ELS}$ of $S \subset M_S$. Finally, since
$\til{c_\infty , d_\infty}$ is a  bi-infinite geodesic passing through a bounded neighborhood of $o$, it projects to a  leaf (or diagonal of
a complementary ideal polygon) of $\Lambda_{ELG}$
in $S_G$. Such leaves are also well-defined as leaves of the ending lamination $\Lambda_{ELH}$, i.e. as leaves of the ending lamination of $M$ regarded
as an element of the Masur domain, cf. Theorem \ref{elinmd}. We have thus finally shown that $\Lambda_{CT} \subset \Lambda_{ELH}$. Combining this with Proposition \ref{ptpreimage}
and Theorem \ref{ct-free}
we have the Theorem. 
\end{proof}

Note that in the proof of Theorem \ref{ptpreimagefinal} we have used freeness of $G$ to conclude only two things: \\
1) The manifold $M$ has exactly one end. \\
2) The path $\lambda$ in $\til H$ can be isotoped off  the Cayley graph of $G$ embedded in $\til H$. \\

To prove an analogue of Theorem \ref{ptpreimagefinal} for arbitrary finitely generated Kleinian groups we continue with
the notation that $M$ is a hyperbolic manifold with augmented Scott core $H$. Then $M$
has finitely many ends. We first note, that if $\lambda =(a_\infty , b_\infty )$ is a CT leaf then there exist $a_n \rightarrow a_\infty$
and $b_n \rightarrow b_\infty$ such that the geodesic realizations $\mu_n$ of $[a_n,b_n]$ in $\til M$ leave arbitrarily large compact sets. 
We may assume that $\til{M} \setminus \til{ H}$ consists of lifts of the ends of $M$ to $\til M$. Each $\mu_n$ intersects finitely many such
lifts of ends and hence has subsegments $\mu_{n1}, \cdots \mu_{nk}$, where each $\mu_{ni}$ lies in a lift $\til{H\cup E_i}$ for some end $E_i$
of $M$. Assume that such a decomposition is minimal (i.e. $k$ is the minimal possible for $\mu_n$). Then, since
$\mu_n$ leaves arbitrarily large compact sets, so must each $\mu_{ni}$. Further, each $\mu_{ni}$ has end-points
in $\til H$. It follows that there will be at least one of the $\mu_{ni}$'s - call it $\nu_n$ -  such that \\
\begin{enumerate}
\item[(a)] $\nu_n$ is contained entirely in one of these lifts of the ends \\
\item[(b)] Endpoints $c_n, d_n$ of $\nu_n$ lie on $\til H$\\
\item[(c)] $c_n \rightarrow c_\infty$
and $d_n \rightarrow d_\infty$, where $c_\infty, d_\infty $ lie in the boundary of $\til H$. \\
\item[(d)] Finally, by considering all  segments $\nu_n$ (the non-uniqueness of $\nu_n$ is used at this stage)
satisfying properties (1)-(3), there exists a finite sequence $a_\infty = a_0, \cdots , a_n=b_\infty$
such that each pair $(a_i, a_{i+1})$ arises as a limiting pair $c_\infty, d_\infty $ as in (3). 
\end{enumerate}
 
 We may therefore assume for the time being that $\mu_n$ lies in precisely one of the lifts of the ends $E$ of $M$. If $S^h = H \cap E$ be its boundary 
then the ending lamination lies in the boundary of the (relatively)
hyperbolic group $j_\ast (\pi_1(S^h))$ (hyperbolic relative to the cusp groups if any), where $j: S^h \rightarrow M$ is inclusion.

Fact (2) now  goes through for arbitrary finitely generated Kleinian groups, as the inclusion of the augmented Scott core into $M$ is a 
homotopy equivalence (in fact a deformation retract)
and we are only interested in leaves which are limits of segments whose geodesic realizations lie inside the lift of a fixed end.

With this modification, and with Theorem \ref{ct} in place,
the proof of Theorem \ref{ptpreimagefinal} goes through for arbitrary finitely generated Kleinian groups
provided a Minsky model obtains (for instance if $M$ has no parabolics, cf. Appendix \ref{app}). However, owing to Item (d) above,
 the statement is a bit more involved.

\begin{theorem}  Let $G$ be a finitely generated Kleinian group. Let $M= \Hyp^3/G$ and assume
 that each degenerate end of $M$ admits a Minsky model  (for instance if $M$ has no parabolics).
 Let $i : \Gamma_G \rightarrow {\mathbb H}^3$ be the natural identification
of a Cayley graph of $G$ with the orbit of a point in ${\mathbb H}^3$. Then $i$ extends continuously to a map 
$\hat{i}: \hhat{\Gamma_G} \rightarrow {\mathbb D}^3$,
where $\hhat{\Gamma_G}$ denotes the (relative) hyperbolic compactification of $\Gamma_G$. Let $\partial i$ denote the restriction of $\hat{i}$ to the boundary $\partial \Gamma_G$ of $\Gamma_G$.

Let $E$ be a degenerate end of $N^h= {\mathbb H}^3/G$ and $\til E$ a lift of $E$ to $\til{N^h}$
and let $M_{gf}$ be an augmented Scott core of $N^h$. Then the ending lamination $\LL_E$ for the end $E$ lifts to a lamination
on $\til{M_{gf}} \cap \til{E}$. Each such lift $\LL$ of the ending lamination of a degenerate end defines a relation $\RR_\LL$ on the (Gromov) 
hyperbolic boundary $ \partial \widetilde{M_{gf}}$
(equal to the relative hyperbolic boundary $\partial \Gamma_G$ of $\Gamma_G$),
 given by
$a\RR_\LL b$ iff $a, b$ are  end-points of a leaf of $\LL$. Let $\{ \RR_i \}_i$ be the entire collection of relations on $ \partial \widetilde{M_{gf}}$ obtained this way. Let $\RR$ be the transitive closure of
the union $\bigcup_i \RR_i$. Then $\partial i(a) = \partial i(b)$ iff $a\RR b$.
\label{ptpreimagefinal-kg}
\end{theorem}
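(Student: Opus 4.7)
The plan is to prove the theorem by establishing both directions ($a\RR b \Rightarrow \partial i(a) = \partial i(b)$ and the converse) separately, while noting that the existence of the continuous extension $\hat{i}$ is already provided by Theorem \ref{ct}.

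For the easy direction (EL implies CT), I would first prove that for each individual lift $\til{E}$ of a degenerate end $E$ and each leaf $\ell$ of the lifted ending lamination $\LL$ on $\til{M_{gf}}\cap \til{E}$, the endpoints of $\ell$ are identified by $\partial i$. This is essentially Proposition \ref{ptpreimage-genl} applied end-by-end: choose a sequence of simple closed curves $\underline{a_i}$ on the boundary surface $S$ of the end converging to the projected ending lamination, realize them geodesically as short closed geodesics $\underline{s_i}$ exiting $E$, connect these by shortest geodesics $\underline{t_i}$, and argue via the ``trapezium'' construction that the lifted loops $b_i$ lie outside arbitrarily large balls about a base point. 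Passing to the transitive closure $\RR$ is then automatic: if $a\RR b$ via a finite chain $a=x_0,x_1,\ldots,x_N=b$, each link satisfies $\partial i(x_{l-1})=\partial i(x_l)$, hence $\partial i(a)=\partial i(b)$.

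For the hard direction (CT implies EL), I would proceed as in the proof of Theorem \ref{ptpreimagefinal}, but with the extra bookkeeping forced by the multiple-ends and parabolics setting sketched in the commentary following that theorem. Fix $a\neq b$ with $\partial i(a)=\partial i(b)$ and a base point $o\in \widetilde{M_{gf}}$. Choose $a_n\to a$, $b_n\to b$ along the bi-infinite geodesic from $a$ to $b$ in $\widetilde{M_{gf}}$, and let $\mu_n\subset \widetilde{N^h}$ be the hyperbolic geodesic between $a_n$ and $b_n$. By the existence of the Cannon-Thurston map (Theorem \ref{ct}) together with Lemmas \ref{contlemma} and \ref{crit-relhyp}, $\mu_n$ leaves every compact set modulo horoballs. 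After small isotopies, using the fact that $\widetilde{M_{gf}}$ separates $\widetilde{N^h}$ into lifts of the ends, each $\mu_n$ decomposes minimally into a concatenation of subpaths $\mu_{n,1},\dots,\mu_{n,k_n}$, each contained in a single lift $\til{H}\cup \til{E}_{j(n,i)}$ with endpoints on $\partial \til{H}$.

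The main technical obstacle, which I expect to be the most delicate point, is controlling this decomposition in the limit: one must show that after passing to a subsequence, the partition stabilizes to a finite chain $a=c_0, c_1, \ldots, c_N=b$ on $\partial \widetilde{M_{gf}}$ such that each consecutive pair $(c_{l-1},c_l)$ arises from subsegments $\nu_n$ that (i) lie in a fixed lifted end $\til{E}$, (ii) have endpoints on $\partial \til{H}$ converging to $c_{l-1}$ and $c_l$, and (iii) leave every compact set in $\til{E}$ modulo horoballs. The finiteness of $N$ comes from a pigeon-hole argument together with the fact that the number of boundary components of the Scott core is finite, bounding the combinatorial complexity of the decomposition uniformly in $n$. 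Once this is achieved, for each link, homotope $\nu_n$ (rel endpoints) within $\til{E}$ onto the universal cover $\til{S}$ of the boundary surface inside the universal cover of the end, which is bi-Lipschitz homeomorphic to the universal cover of a Minsky model by Theorem \ref{free-model}. Applying the single-end ending-lamination theorem (Theorem \ref{ptpre0}, i.e.\ Theorem \ref{ptpre-inde}) to that end yields that $(c_{l-1},c_l)$ are endpoints of a leaf of the lifted ending lamination of that end, lifted to $\til{M_{gf}}\cap \til{E}$ (well-defined as an element of the Masur domain modulo $Mod_0$ by Theorem \ref{elinmd}); hence $c_{l-1}\RR_{\LL}c_l$. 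Transitivity of $\RR$ then gives $a\RR b$, completing the proof.
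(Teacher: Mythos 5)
Your proposal takes essentially the same route as the paper: existence via Theorem \ref{ct}, the easy (EL $\Rightarrow$ CT) direction via Proposition \ref{ptpreimage-genl} end-by-end with transitivity automatic, and the hard (CT $\Rightarrow$ EL) direction by decomposing $\mu_n$ into subsegments lying in single lifts of ends, extracting a finite chain of limiting boundary pairs, and applying the single-end result (Theorem \ref{ptpre0}/\ref{ptpre-inde}) to each link. One small caveat: you assert the combinatorial complexity of the decomposition is bounded \emph{uniformly in $n$}, which the paper does not actually claim (it instead says, somewhat informally, that one extracts a finite chain by ``considering all the $\nu_n$'' and passing to limits); the paper's item (d) leaves this finiteness argument implicit rather than pinned down to a uniform bound on $k_n$, so you should be careful not to claim more than is proved -- but this is a quibble about phrasing rather than a genuine divergence of method.
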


\section{Applications} 
In this section we shall first mention a couple of  applications  of the main Theorems of this paper. Finally we indicate an extension of the Sullivan-McMullen dictionary between complex
dynamics and Kleinian groups.

\subsection{Primitive Stable Representations} In  \cite{minsky-primitive} Minsky
 introduced and studied primitive stable representations, an open set of
$PSl_2({\mathbb C})$ characters of a nonabelian free group, on which the action of
the outer automorphism group is properly discontinuous, and which is strictly larger
than the set of discrete, faithful convex-cocompact  (i.e. Schottky)
characters.

In  \cite{minsky-primitive} Minsky also conjectured that \\
{\it A discrete faithful representation of $F$  is
primitive-stable if and only if every component of the ending lamination is blocking.}

Using the structure of the Cannon-Thurston map for handlebody groups,   Jeon, Kim, Lecuire and Ohshika
 \cite{woojin} have solved this conjecture. We sketch their proof  for degenerate free groups without parabolics with associated representation
 denoted by $\rho$. 
 They  show that
for the ending lamination $\Lambda_E$ of a degenerate free group
without parabolics, $Wh(\Lambda_E , \Delta)$  is connected
and has no cutpoints.
  Then they argue by contradiction. 
 If  $\rho$  is not
primitive stable, then there exists a sequence of primitive cyclically reduced elements $w_n$ such
that $\rho(\cdots w_nw_nw_n \cdots)$ is not an $n-$ quasi-geodesic. 
After passing to a subsequence, $w_n$ and hence $\cdots w_nw_nw_n \cdots$ converges to a bi-infinite geodesic $ w_\infty$ in the Cayley graph with
two distinct end points $w_+, w_-$  in the Gromov boundary of $F$ identified by the Cannon-Thurston map.
It follows from a Lemma due to  Whitehead  that 
$w_n$ cannot be primitive for large $n$, a contradiction.

\subsection{Discreteness of Commensurators}
In \cite{llr} and \cite{mahan-commens}, the main results of this paper are used to prove that commensurators of finitely generated,  infinite covolume, Zariski dense Kleinian
groups are discrete. The proof proceeds by showing that commensurators preserve the structure of point pre-images of Cannon-Thurston maps.

\subsection{Extending the Sullivan-McMullen Dictionary }

A celebrated theorem of Yoccoz in Complex Dynamics (see Hubbard
\cite{hubbard-yoccoz}, or Milnor
\cite{milnor-yoccoz}) proves the local connectivity of certain Julia
sets using a technique called `puzzle pieces', which consists of a
decomposition of a complex domain into pieces each of which under
iteration by a quadratic map converges to a single point. The
dynamical system can then be regarded as a semigroup ${\mathbb{Z}}_+$
of transformations acting on a complex domain. 

Split  components can be regarded as a 3-dimensional analogue
of puzzle pieces. Let us try to justify this analogy.
Suppose there is a group $G$ acting cocompactly on
${\mathbb{H}}^3$. Let $H \subset G$ be a subgroup. Let $G/H$ denote the coset space. 
Then what we would want as the right analogue is that if one takes a sequence of elements
$g_i$ going to infinity in the
coset space, the iterates of the convex hull of the limit set of $H$ converge to a point
in the limit sphere. Thus going to infinity in the coset space  $G/H$ would be the right Kleinian groups analogue of 
 going to infinity in the  semigroup ${\mathbb{Z}}_+$
of transformations acting on a complex domain.

In the context of this paper, $H$ would be the fundamental group of a split component.
 However, for us
 $G$ is a surface Kleinian group and does not act co-compactly on ${\mathbb{H}}^3$. We think of the quotient space ${\mathbb{H}}^3/H$ as parametrizing
the set of normal directions to the split
component. The graph
metric gives a combinatorial distance on  ${\mathbb{H}}^3/H$ and we think of $({\mathbb{H}}^3/H, d_G)$ as the analogue of  the  semigroup ${\mathbb{Z}}_+$. 
 Thus, instead of going to
infinity by iteration in  the  semigroup ${\mathbb{Z}}_+$, we go to infinity in the graph metric. Further,
the analogue of the  requirement that iterates go to infinity, is that
the visual diameter goes to zero as we move to infinity in the graph
metric. This is  ensured by hyperbolic quasiconvexity, and also
follows easily from {\bf graph quasiconvexity}. Note that {\bf graph
quasiconvexity} is a statement that gives uniform shrinking of visual
diameter to zero as one goes to infinity.

Thus we extend the Sullivan-McMullen dictionary (see
\cite{sullivan-dict}, \cite{ctm-renorm}) between Kleinian
groups and complex dynamics by suggesting the following analogy:

\begin{enumerate}
\item {\em Puzzle pieces} are analogous to {\em split components} \\
\item {\em Convergence  to a point under iteration} is analogous to
  {\em graph quasiconvexity} \\
\end{enumerate}

One issue that gets clarified by the above analogy is a point raised
by McMullen in \cite{ctm-locconn}. McMullen indicates that though the
Julia set $J(P_\theta )$, where 
\begin{center}
$P_\theta (z) = e^{2\pi i \theta}z + z^2$
\end{center}

\noindent need not be locally connected in general by a result of Sullivan
\cite{sullivan-lc}, the limit sets of  punctured torus groups are
nevertheless locally connected. Local connectivity of Julia sets would therefore not be the right analogue of 
local connectivity of limit sets in this setup. Instead we look at the {\it techniques} for proving local connectivity of limit sets 
vis-a-vis  the {\it techniques} for proving local connectivity of Julia sets. Thus,  by proposing the analogy between puzzle
pieces and split components as above, this issue is to an extent clarified. In short, the analogy is in the technique rather than in the result.

\smallskip

An analogue of the ${\mathbb{Z}}_+$ dynamical system may also be
extracted from the split geometry model. Note that each block
corresponds to a splitting of the surface group, and hence an action
on a tree. As $i \rightarrow \infty$, the split blocks $B^s_i$ and
hence the induced splittings also go to infinity, converging to a {\bf
free
action of the surface group on an ${\mathbb{R}}$-tree dual to the ending
lamination}. Thus iteration of the quadratic function corresponds to
taking a sequence of splittings of the surface group converging to a
(particular)
action on an ${\mathbb{R}}$-tree. 

\smallskip

{\bf Problem:} The building of the Minsky model and its bi-Lipschitz
equivalence to a hyperbolic manifold \cite{minsky-elc1}
\cite{minsky-elc2} gives rise to a speculation that there should be a
purely combinatorial way of doing much of the work. Bowditch's
rendering \cite{bowditch-endinv}, \cite{bowditch-model}
of the Minsky, Brock-Canary-Minsky results is a step in this
direction. This paper brings out the possibility that the whole thing
should be do-able purely in terms of actions on trees. Of course there
is an action of the surface group on a tree dual to a pants
decomposition. So we do have a starting point. However, one ought to
be able to give a purely combinatorial description, {\em ab initio},
in terms of a sequence of actions of surface groups on trees
converging to an action on an ${\mathbb{R}}$-tree. This would open up the
possibility of extending these results (including those of this paper)
to other hyperbolic groups with
infinite automorphism groups, notably free groups.

\section{Appendix: Model Manifold for compressible boundary} \label{app} In this Appendix, for the sake of completeness,
 we provide a sketch
of a bi-Lipschitz Minsky model for general $M$ without parabolics, following work and ideas
of several authors, notably Brock, Bromberg and Souto. No claim to originality is made here.
Our aim is modest:
to reduce the problem of existence of such a model to the case with incompressible boundary discussed in \cite{minsky-elc2} and hence conclude the existence of a bi-Lipschitz model geometry in general. 
We shall focus on a degenerate end $E$ such that $\partial E = S$
is compressible. We would like to prove that $E$ has a bi-Lipschitz model as in \cite{minsky-elc2}.
By passing to the cover of $M$ corresponding to the end $E$ 
(i.e. the cover of $M$ corresponding to the image of $\pi_1(E) $ in $\pi_1(M)$),
we might as well {\bf assume that $M$ is a compression body}. We sketch the proof below in the
case when $S (= \partial E)$ is closed.
 The case when $S$ has parabolics, and hence $M$ has rank one cusps, is technically quite a bit more
more involved and we refer the reader to \cite[p. 148]{BBES} 
where the presence of accidental parabolics 
is addressed.

The model geometry for the case where $E$ has bounded geometry has been treated in detail in 
\cite{mosher-model, bowditch-stacks, ohshika-tams} (see also \cite{mitra-trees})
and so we assume that $E$ has unbounded geometry. The main idea is to isolate $E$ and reduce to
the incompressible case. The point is to show that the asymptotic geometry of $E$ is bi-Lipschitz to
an incompressible simply degenerate end.\\

\noindent {\bf Step  1: Existence of Disk-busting curves:}  We consider a sequence of simple closed curves $c_i$ on $S$
whose geodesic realizations  exit
$E$ and have length going to zero. We note first that for large enough $i$, these curves
nontrivially intersect all compressing disks with boundary in $S$. If not, there exists compressing disk $D_i$
with $\partial D_i \subset S$ such that $ \partial D_i \cap c_i = \emptyset$. Normalizing $c_i$ by its length
and taking a limit in $\PML(S)$, it would follow that the (limiting) ending lamination does not lie in the Masur Domain, a contradiction (cf.
the definition of the Masur domain after Theorem \ref{free-model}).
 Hence, for all large enough $i$, the curves $c_i$ are disk-busting.

Hence if we drill out the
geodesic realizations of $c_i$ from $E$, and equip the resulting manifold  with a complete hyperbolic structure
with a rank 2 cusp where $c_i$ has been drilled
(while fixing the end-invariants), the new end $E_i$ is incompressible.  Then, by \cite{minsky-elc2},
$E_i$ has a model geometry. 
We would be done if we could establish that $E$ and $E_i$ are bi-Lipschitz 
homeomorphic, as this would transfer the bi-Lipschitz model of $E_i$ to $E$. Instead of drilling, we shall
perform the closely related construction of grafting below.\\

\noindent {\bf Step 2: Grafting Constructions:} In
 \cite{bromberg-proj, brock-bromberg-density, BBES,
brock-bromberg-jtop}, two kinds of  grafting constructions are used. 
We call these up-grafting and down-grafting below.

The standard construction  \cite{bromberg-proj} (Section 4, where grafting an infinite annulus is described)
involves cutting the manifold open along a semi-infinite 
cylinder and gluing in a wedge. Thus one {\it up-grafts} an annulus along the geodesic realization of $c_i$
in $M$ as in \cite{bromberg-proj, brock-bromberg-density} to get a cone manifold $N_i$
with cone angle $4\pi$ along the geodesic realizations of $c_i$.

The other kind of grafting (which we call down-grafting) is
described in detail in \cite{bromberg-proj} (see Section 4 of the paper, especially 
Theorem 4.2)\footnote{I am grateful to Ken Bromberg for explaining this
construction to me.}. We give a quick summary.     
 Take an annulus going out the end based at the curve $c_0$.
 We pass to the cover of $E$ associated to the subsurface obtained by taking the complement of the curve. For 
ease of exposition, assume that $c_0$
 is non-separating.
 In the cover the  annulus will have two isometric lifts. Cut the cover
along both of them and then glue them together. The result is
 a cone-manifold $M_{c_0}$ that is homeomorphic to  $S \times \R$.

To ensure that (up or down) 
grafting is possible, one needs two conditions: \\
a) that the geodesic is unknotted, \\
b) that the geodesic is sufficiently small. This allows the relevant
technical tools of \cite{bromberg-proj}, \cite{brock-bromberg-density} (Section 5, especially 
Theorem 5.1 which proves the existence of  the geometrically finite
cone manifold), 
  to go through.\\

The first condition follows from the second thanks to work of Otal
\cite{otal-unknot, otal-unknot2} (See also \cite{souto-unknot}). Shortness of curves is guaranteed
by unbounded geometry of $E$.\\

\noindent {\bf Step 3: Double Grafting and Convergence:}
Choose sufficiently short (short enough to allow grafting in Step 2 to go through)
disk-busting curves $c_0$ and $c_i$ ($i\in \natls$). 
Along $c_0$ perform down-grafting to obtain 
a cone-manifold $M_{c_0}$ that is homeomorphic to  $S \times \R$. As a result, 
one end (the “up” end) will be the original unbounded geometry end. The “down” end will be geometrically finite.
This process effectively isolates the "up" end (isometric to $E$) with the other end of
 $M_{c_0}$ being geometrically finite. We need to show that $E$ is bi-Lipschitz to the
 end of a simply degenerate manifold.

Along the $c_i$ curves now perform the standard (up)
grafting construction on $M_{c_0}$ described in Step 2 above to obtain quasi-Fuchsian 
cone manifolds $N_i$. Smooth quasi-Fuchsian 
 manifolds
$M_i$ may be obtained from $N_i$ by decreasing the cone angle to $2\pi$ \cite{bromberg-proj}.

The main technical Theorems of Brock and Bromberg on 
 inflexibility
  \cite[Theorem 3.6]{brock-bromberg-jtop}, \cite[Theorems 1.1, 1.2]{brock-bromberg-inflex} now guarantee
 that the cone deformations 
do not change the hyperbolic structure much away from the boundary. More precisely,
fix an arbitrary compact core
$\MM$ of $M_{c_0}$
 and assume that all the curves $c_i$ have geodesic realizations outside $\MM$.  Then the inflexibility
Theorem \cite[Theorem 3.6]{brock-bromberg-jtop} shows that the cone deformation from $N_i$ to $M_i$
has small effect in the thick part of $\MM$ (more precisely, the harmonic strain field decays exponentially
with distance from the boundary). It follows that the hyperbolic structures on $N_i$ and $M_i$ 
 when restricted to $\MM$ are
$(1+\epsilon_i)-$biLipschitz to each other, where $\epsilon_i \to 0$ as $i \to \infty$.

Now, in the cone-manifolds $N_i$, between the two curves
$c_0$ and $c_i$, there is a larger and larger product region that is 
isometric
to a product region in the original end
$E$. Therefore (from the inflexibility theorem above) the sequence of smooth quasifuchsian manifolds
$M_i$ converges to a simply degenerate manifold
$M_\infty$ whose  degenerate end 
$E_\infty$ is bilipschitz to the original end
$E$. Since $E_\infty$ has a model geometry by \cite{minsky-elc1, minsky-elc2}, so does $E$.

\bibliography{hbody}
\bibliographystyle{alpha}

\end{document}